\documentclass[a4paper,11pt]{article}
\usepackage[text={170mm,254mm}]{geometry}
\usepackage[T1]{fontenc}
\usepackage[utf8]{inputenc}
\usepackage{lmodern}
\usepackage{authblk}
\usepackage{graphicx}
\usepackage[dvipsnames]{xcolor}
\usepackage{pdflscape}
\usepackage{amsthm}
\usepackage{mathtools}
\usepackage{amssymb}
\usepackage{enumitem}
\usepackage{multirow}
\usepackage{subcaption}
\usepackage[hyperfootnotes=true,colorlinks=true]{hyperref}
\hypersetup{colorlinks=true,linkcolor=blue,citecolor=magenta,urlcolor=black}
\usepackage{tikz}
\usetikzlibrary{calc}
\usetikzlibrary{backgrounds}
\usetikzlibrary{intersections}

\def\x{\mathbf{x}}
\def\y{\mathbf{y}}
\def\z{\mathbf{z}}
\def\s{\mathbf{s}}
\def\w{\mathbf{w}}

\DeclareMathOperator{\Aut}{Aut}
\DeclareMathOperator{\cart}{\Box}
\DeclareMathOperator{\im}{Im}

\theoremstyle{plain}
\newtheorem{theorem}{Theorem}
\newtheorem{lemma}[theorem]{Lemma}
\newtheorem{corollary}[theorem]{Corollary}
\newtheorem{proposition}[theorem]{Proposition}
\newtheorem{conjecture}[theorem]{Conjecture}

\theoremstyle{definition}
\newtheorem{definition}{Definition}

\title{Bipartite edge frustration of nut graphs: proofs and constructions}

\author[1,2,$\ast$]{Nino Ba{\v s}i{\'c}}
\author[3]{Patrick~W.~Fowler}
 
\affil[1]{Faculty of Mathematics and Physics, University of Ljubljana,
  Ljubljana, Slovenia}
\affil[2]{Institute of Mathematics, Physics and Mechanics, Ljubljana, Slovenia}
\affil[3]{Chemistry, School of Mathematical and Physical Sciences, 
University of Sheffield, \linebreak Sheffield S3 7HF, UK}

\begin{document}

\maketitle

\begin{NoHyper}
\renewcommand{\thefootnote}{$\ast$}
\footnotetext{Corresponding author: \texttt{nino.basic@fmf.uni-lj.si}}
\end{NoHyper}

\begin{abstract}
The bipartite edge frustration of a graph $G$, $\varphi(G)$, is the minimum
number of edges that must be removed from $G$ to obtain a graph that is
bipartite. Core graphs are singular graphs on $2$ or more vertices, for which
the kernel of the adjacency matrix contains a full eigenvector. A \emph{nut
graph} is a core graph of nullity one. Nut graphs $G$ are non-bipartite. It is
proved here that $\varphi(G) \geq 2$ if $G$ is a nut graph, i.e.\ every nut
graph requires deletion of at least two edges to produce a bipartite graph. In
addition, we give a constructive proof that for every integer $k \geq 2$ there
exist infinitely many chemical nut graphs $G$ with $\varphi(G) = k$.
Furthermore, for $3 \leq d \leq 8$, the values $k$, for which a $d$-regular nut
graph $G$ with $\varphi(G) = k$ exists, are characterised. It is shown here that
for every integer $k \geq 2$ there exist infinitely many cubic polyhedral nut
graphs $G$ with $\varphi(G) = k$. Two ways of producing larger nut graphs from
smaller are described: the \emph{cut} and \emph{frustration} constructions. One
straightforward consequence is that total double subdivision of a nut graph
(i.e.\ simultaneous insertion of two degree-$2$ vertices into every edge) yields
a nut graph.

\vspace{\baselineskip}
\noindent \textbf{Keywords:} Nut graph, core graph, nullity, regular graphs,
polyhedral graphs, bipartite edge frustration, max-cut, odd-cycle edge
transversal, graph spectra, bipartivity, Estrada index.

\vspace{\baselineskip}
\noindent \textbf{Math.\ Subj.\ Class.\ (2020):}
05C50, 
05C75, 
05C92  
\end{abstract}

\section{Introduction}

Core graphs are singular graphs on $2$ or more vertices, for which the kernel of
the adjacency matrix contains an eigenvector without zero entries (i.e.\ a full
vector). Nut graphs are defined to have $2$ or more vertices, nullity one, and
the property that they have no zero entry in their non-trivial kernel
eigenvector~\cite{Gutman1998}. In other words, nut graphs are precisely the core
graphs with nullity one. They are of interest for several applications in
chemistry and
physics~\cite{PWF_JCP_2014,Fowler2013,MaxinePaper1,nuttybook,feuerpaper,ScFo2008}.
The graph $K_1$ is bipartite, but excluded from the definition of core and nut
graphs~\cite{Gutman1998}. Core graphs may be bipartite or not, but nut graphs
are non-bipartite~\cite{Gutman1998}. This prompts the question: if nut graphs
are not bipartite, how `far from bipartite' are they?

A natural approach to quantifying this notion of bipartivity is based on
counting those edges that break the defining rule of bipartite graphs,
i.e.\ edges with both endpoints in the same class of a bipartition of $V(G)$. An
edge $e \in E(G)$ is a \emph{frustrated edge} with respect to a given
bipartition $V(G) = V_1 \sqcup V_2$ if both endpoints of $e$ are in the same
class of the bipartition. The \emph{bipartite edge frustration of a graph} $G$,
denoted $\varphi(G)$, is the minimum number of frustrated edges over all
possible bipartitions of the vertex set $V(G)$~\cite{Doslic2007}. In other
words, $\varphi(G)$ is the smallest number of edges that must be deleted from
$G$ in order to obtain a bipartite spanning subgraph. In these terms, the
\emph{bipartivity} of a graph $G$ with $m$ edges is the fraction
$1-\varphi(G)/m$~\cite{Holme2003}. Note that $\varphi(G)$ is a graph invariant,
but that minimum edge count is typically achieved with more than one selection
of frustrated edges. Figure~\ref{fig:smallNuts} shows examples of nut graphs
with small values of the parameter $\varphi(G)$. We will present stratified
tabulations of $\varphi$ for various classes of nut graph.

A large literature in mathematics and computer science deals with versions of
this
problem~\cite{Alon1996,Alon2003,Alon2005,Bondy1974,Edwards1973,Poljak1994,Poljak1995,Shearer1992},
as bipartite edge frustration has been studied under different names. Some
authors denote it by $\tau_\text{odd}(G)$ and call it the smallest size of an
odd-cycle edge transversal~\cite{Kral2004}. Many authors have studied the
maximum cut (max-cut) problem. For example, in~\cite{Alon2003} the notation
$f(G)$ is used to denote the maximum number of edges of $G$ crossing between the
two parts of a bipartition, taken over all possible bipartitions of the vertex
set $V(G)$. In this form, the problem attracted the attention of Erd{\H
o}s~\cite{Erdos1965}. Here we adopt the notation $\kappa(G)$ instead of $f(G)$,
and note that $\kappa(G) + \varphi(G) = |E(G)|$.

It is known that determining bipartite edge frustration of a graph,
$\varphi(G)$, is NP-hard in general~\cite{Yannakakis1978,Yannakakis1981}. The
problem remains NP-hard even within the class of cubic graphs, which implies
that determining the bipartite edge frustration of chemical graphs is also
NP-hard. However, bipartite edge frustration of planar graphs can be computed in
polynomial time~\cite{Aoshima1977,Hadlock1975}.

Much work has been done on establishing the lower bound for the cardinality of
maximum cut sets in certain classes of graphs (see
{\S}\ref{subsec:upperBounds}), which is equivalent to finding the upper bound on
bipartite edge frustration $\varphi(G)$. Here, we establish a sharp lower bound
for $\varphi(G)$ in the case that $G$ is a nut graph.

\begin{figure}[!htbp]
\centering
\subcaptionbox{$n = 7, \varphi = 2$}{\input{img/fig1a.tikz}}
\qquad
\subcaptionbox{$n = 7, \varphi = 2$}{\input{img/fig1b.tikz}}
\qquad
\subcaptionbox{$n = 8, \varphi = 2$}{\input{img/fig1c.tikz}}

\vspace{0.5\baselineskip}
\subcaptionbox{$n = 9, \varphi = 3$}{\input{img/fig1d.tikz}}
\quad
\subcaptionbox{$n = 9, \varphi = 3$}{\input{img/fig1e.tikz}}
\quad
\subcaptionbox{$n = 12, \varphi = 3$}{\input{img/fig1f.tikz}}
\quad
\subcaptionbox{$n = 7, \varphi = 4$} {\input{img/fig1g.tikz}}
\caption{Examples of nut graphs with small bipartite edge frustration. Edges
defining a minimum set of frustrated edges are drawn dashed; vertices of the
graph are coloured to exhibit the implied bipartite spanning graph. Panels (a)
to (c) show the smallest nut graphs with $\varphi = 2$; (d) and (e) show two
choices of frustrated edges for one of the $80$ nut graphs with $\varphi = 3$ on
$n = 9$; (f) shows the Frucht graph, one of the five chemical nut graphs with
$\varphi = 3$ on $n = 12$; (g) shows the smallest nut graph with $\varphi = 4$.
For these small examples, the graph invariant $\varphi$ is easily checked by
hand.}
\label{fig:smallNuts}
\end{figure}

Our main theoretical result, expressed in terms of $\varphi$, is embodied in the
theorem below:

\begin{theorem}
\label{thm:mainResult}
Let $G$ be a core graph. Then $\varphi(G) = 0$ or $\varphi(G) \geq 2$.
\end{theorem}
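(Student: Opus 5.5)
We argue by contradiction: suppose $G$ is a core graph with $\varphi(G) = 1$. Then there is a bipartition $V(G) = V_1 \sqcup V_2$ with exactly one frustrated edge $e = uv$, and without loss of generality $u, v \in V_1$. Deleting $e$ gives a bipartite spanning subgraph $H = G - e$ with bipartition $(V_1, V_2)$. Let $\x$ be a full kernel eigenvector of the adjacency matrix $A(G)$, so that for every vertex $w$ the local condition $\sum_{z \sim w} x_z = 0$ holds. We will derive a contradiction by comparing two ways of summing these local conditions over one side of the bipartition.

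The key step is a counting identity. Sum the local condition over all $w \in V_2$. Since $u, v \in V_1$, each neighbour of a vertex of $V_2$ lies in $V_1$; indeed every edge touching $V_2$ is an edge of $H$ between $V_2$ and $V_1$. Thus
\[
0 = \sum_{w \in V_2} \sum_{z \sim w} x_z = \sum_{z \in V_1} d_{V_2}(z)\, x_z ,
\]
where $d_{V_2}(z)$ is the number of neighbours of $z$ in $V_2$. This form is not directly useful, so instead we sum over $V_1$:
\[
0 = \sum_{w \in V_1} \sum_{z \sim w} x_z = \sum_{z \in V_2} d_{V_1}(z)\, x_z + 2\cdot\tfrac{1}{2}(x_u + x_v)\cdot\ldots
\]
More cleanly, we will use the eigenvector to define a signed vector $\y$ with $y_z = x_z$ for $z \in V_1$ and $y_z = -x_z$ for $z \in V_2$. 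The bipartite structure shows that $A(H)\y = -A(H)\x$, and $A(G) = A(H) + E_{uv}$, where $E_{uv}$ is the adjacency matrix of the single edge.

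The main computation is then the following. From $A(G)\x = 0$ we get $A(H)\x = -E_{uv}\x$, whose only nonzero entries are $-x_v$ at $u$ and $-x_u$ at $v$. Take the quadratic-form identity $\x^{\top}A(G)\x = 0$ and compare it with $\y^{\top}A(G)\y$. Edges of $H$ join opposite classes, so they flip sign under $\x \mapsto \y$, while the edge $uv$ does not, because $u$ and $v$ lie in the same class. Hence
\[
\y^{\top} A(G) \y = -\x^{\top}A(H)\x + 2x_u x_v = -(\x^{\top}A(G)\x - 2x_ux_v) + 2x_ux_v = 4x_u x_v .
\]
On the other hand, we compute $A(G)\y$ directly. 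At a vertex $w \in V_2$, all neighbours lie in $V_1$, so $(A(G)\y)_w = (A(G)\x)_w = 0$. At $w \in V_1 \setminus \{u,v\}$, all neighbours lie in $V_2$, so $(A(G)\y)_w = -(A(G)\x)_w = 0$. At $u$ we have $(A(G)\y)_u = -\sum_{z\sim u, z\neq v} x_z + x_v = -(0 - x_v) + x_v = 2x_v$, and similarly $(A(G)\y)_v = 2x_u$. Therefore $\y^{\top}A(G)\y = y_u\cdot 2x_v + y_v\cdot 2x_u = 4x_ux_v$, which is consistent and gives no contradiction. So the quadratic form alone fails, and the real step is to use the vector $A(G)\y$ itself.

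The contradiction comes as follows. We have $A(G)\y = 2x_v \mathbf{e}_u + 2x_u \mathbf{e}_v$. The left side lies in the column space of the symmetric matrix $A(G)$, which is orthogonal to $\ker A(G) \ni \x$. Hence
\[
0 = \x^{\top}A(G)\y = 2x_ux_v + 2x_vx_u = 4x_ux_v .
\]
Thus $x_u x_v = 0$, contradicting fullness of $\x$. The case $\varphi(G)=0$ is allowed, so this proves the theorem. The only delicate point is the bookkeeping at $u$ and $v$, where the edge $uv$ does not flip sign; everything else is routine.
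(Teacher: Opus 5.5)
Your final argument is correct and is essentially the paper's proof via Lemma~\ref{lemma:3}: pairing the kernel equation with the sign-flipped vector $\y$ is the paper's block computation $\x^\intercal\mathbf{P}\x = -\x^\intercal\mathbf{B}\y = 0$ in different notation, with the bipartite cross terms cancelling and only the $uv$ term surviving. You then contradict fullness directly rather than going through Lemma~\ref{lemma:4}, which is all the theorem needs, though in a write-up you should drop the abandoned sums over $V_1$, $V_2$ and the $\y^\intercal A(G)\y$ detour, and note that $A(H)\y=-A(H)\x$ relies on $(A(H)\x)_w=0$ for $w\in V_2$, not on bipartiteness alone.
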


Recall that every nut graph is a core graph and that all nut graphs are
non-bipartite, i.e.\ $\varphi(G) \neq 0$ if $G$ is a nut graph.
Theorem~\ref{thm:mainResult} has an immediate consequence.

\begin{corollary}
\label{thm:mainCorollary}
Let $G$ be a nut graph. Then $\varphi(G) \geq 2$.
\end{corollary}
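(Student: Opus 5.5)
The corollary follows at once from Theorem~\ref{thm:mainResult} together with the fact that nut graphs are non-bipartite. So the real work is the theorem, and the plan is to prove it by contradiction.

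\textbf{Setup.} Suppose $G$ is a core graph with $\varphi(G)=1$. Then there is an edge $e=uv$ such that $G-e$ is bipartite, with bipartition $V_1\sqcup V_2$. Since $G$ itself is not bipartite, both endpoints $u,v$ lie in the same class, say $V_1$. Let $\x$ be a full kernel vector of $A(G)$. The kernel equation at each vertex $w$ reads $\sum_{z\sim w} x_z = 0$.

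\textbf{Key step: an alternating-sign sum.} Define $s_w=+1$ for $w\in V_1$ and $s_w=-1$ for $w\in V_2$. Multiply the kernel equation at each vertex $w$ by $s_w x_w$ and sum over all $w$. This gives
\[
0=\sum_{w} s_w x_w \sum_{z\sim w} x_z=\sum_{wz\in E(G)} (s_w+s_z)\,x_w x_z .
\]
For every edge of $G-e$ the endpoints lie in opposite classes, so $s_w+s_z=0$. Only the edge $e$ survives, and it contributes $2x_ux_v$. Hence $x_ux_v=0$, which contradicts the fullness of $\x$.

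\textbf{Sanity check.} Equivalently, set $D=\mathrm{diag}(s)$. Then $DA+AD$ has nonzero entries only at positions $(u,v)$ and $(v,u)$, each equal to $2$. The identity used is $\x^{T}(DA+AD)\x=2\x^{T}DA\x=0$, which holds because $A\x=0$. This matches the computation above.

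I expect no serious obstacle. The only point needing care is that the frustrated edge has both endpoints in one class, so its sign coefficient is $\pm2$ rather than $0$. The case $\varphi(G)=0$ is allowed by the statement, so nothing more is needed. Finally, applying the theorem to a nut graph, which is a non-bipartite core graph, yields $\varphi(G)\ge 2$.
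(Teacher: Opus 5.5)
Your proof is correct and is essentially the paper's argument. Your signed sum is exactly the combination $\x^\intercal(\mathbf{P}\x+\mathbf{B}\y)-\y^\intercal\mathbf{B}^\intercal\x=\x^\intercal\mathbf{P}\x=2\x(u_1)\x(u_2)$ that the paper extracts in Lemma~\ref{lemma:3} from the block form of the kernel equations. The only difference is that you contradict fullness of the kernel vector directly, whereas the paper also passes through Lemma~\ref{lemma:4}; that step is not needed once a full kernel vector is available.
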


Other results presented here focus on the construction and characterisation of
nut graphs with prescribed bipartite edge frustration within various significant
graph classes. The paper is structured as follows. After some background
in~{\S}\ref{sec:background}, a proof of Theorem~\ref{thm:mainResult} is given
in~{\S}\ref{sec:bigtheorem}. In~{\S}\ref{frust}, we give stratified counts for
general, chemical and regular nut graphs by bipartite edge frustration (see also
the~\hyperref[appendix:a]{Appendix}). Then~{\S}\ref{sec:5} gives theoretical
results showing that a nut graph $G$ with $\varphi(G) = k$ can be constructed
for every $k \geq 2$ in the general and chemical graph classes. For $3 \leq d
\leq 8$, we characterise the values $k$, for which a $d$-regular nut graph $G$
with $\varphi(G) = k$ exists. In the same section, we also show that for every
integer $k \geq 2$ there exist infinitely many cubic polyhedral nut graphs $G$
with $\varphi(G) = k$. This section also describes the effects of some
literature constructions on bipartite edge frustration. The fact that all nut
graphs contain at least two frustrated edges motivates definition of novel
constructions (see~{\S}\ref{sec:newConstruct}) for nut graphs, each based on
inflation of a set of edges in the parent nut graph. These are the \emph{cut}
and the \emph{frustration} constructions. The second implies that simultaneous
double subdivision of all edges of a nut graph gives a larger nut graph.
{\S}\ref{Zukunft} offers conclusions, suggestions for further work, and
comparisons with other bipartivity measures.

\section{Preliminaries}
\label{sec:background}

Here we define some notions from graph theory that we will use and present some
established tools that will prove useful.

\subsection{Notation}

Graphs in this paper are finite and simple, unless stated otherwise.
\emph{Chemical graphs} are simple, connected and subcubic, i.e.\ with maximum
degree $\leq 3$. $G(v)$ denotes the set of neighbours of a vertex $v$ in a graph
$G$. For a graph $G$, the adjacency matrix will be denoted by $\mathbf{A}(G)$.
We use $\ker G$ to denote the kernel of the adjacency matrix of $G$, i.e.\ $\ker
G = \ker \mathbf{A}(G)$. The \emph{nullity} of a graph $G$, $\eta(G)$, is the
dimension of the kernel of its adjacency matrix, i.e.\ $\eta (G) = \dim \ker G$.
We say that a vector $\x$ is \emph{full} if it contains no zero entry. In
constructing kernel eigenvectors, we will often find it useful to express the
condition $\mathbf{A}(G) \x = \mathbf{0}$ for $\x \in \ker G$ in the form of an
explicit \emph{local condition} at a vertex $u \in V(G)$:
\begin{equation}
\sum_{v \in G(u)} \x(v) = 0.
\end{equation}
In {\S}\ref{sec:5dot4}, the degree sequence of a graph $G$ is written as a
multiset $\{ d_1^{n_1}, d_2^{n_2}, \ldots \}$, which means that there are $n_i$
vertices of degree $d_i$ in the graph. In other contexts, we may write the
spectrum of a graph $G$ in the form of a multiset $\{ \lambda_1^{k_1},
\lambda_2^{k_2}, \ldots \}$ which means that the eigenvalue $\lambda_i$ has
multiplicity $k_i$ in $G$.

The smallest order $n$ for which nut graphs exist is $n = 7$. There are
precisely $3$ non-isomorphic nut graphs of order $7$, which we call the Sciriha
graphs and denote by $S_1, S_2$ and $S_3$. These are graphs (a), (b) and (g) in
Figure~\ref{fig:smallNuts}, respectively. A vertex $v \in V(G)$ is called a
\emph{core vertex} if $\x(v) \neq 0$ for some $\x \in \ker G$, and is called
\emph{core-forbidden} otherwise~\cite{Sciriha2007}.

We will need notation for various classes of nut graphs. Let $\Delta(G)$ denote
the maximum vertex degree in $G$. $\mathcal{N}_n$ denotes the set of all nut
graphs of order $n$. $\mathcal{N}_n^d$ denotes the set of all $d$-regular nut
graphs of order $n$, while $\mathcal{N}_n^{\{d\}}$ denotes the set of all nut
graphs of order $n$, such that $\Delta(G) \leq d$. Note that $\mathcal{N}_n^3$
is the set of cubic nut graphs of order $n$, while $\mathcal{N}_n^{\{3\}}$ is
the set of chemical nut graphs of order $n$. Hence, $\mathcal{N}_7 = \{S_1, S_2,
S_3\}$. For the special classes of cubic polyhedra and fullerenes on $n$
vertices we use $\mathcal{N}_n^{\text{cp}}$ and $\mathcal{N}_n^{\text{full}}$,
respectively, with $\mathcal{N}_n^{\text{full}} \subset
\mathcal{N}_n^{\text{cp}} \subset \mathcal{N}_n^3$.

A \emph{rooted graph} is a graph in which one vertex is distinguished. Formally,
a rooted graph is a pair $(G, v)$, where $G$ is the graph and $v \in V(G)$ is
the distinguished vertex which we call the \emph{root vertex}. Let $(G_1, v_1)$
and $(G_2, v_2)$ be rooted graphs. The \emph{coalescence of $(G_1, v_1)$ and
$(G_2, v_2)$}, which we denote here as $(G_1, v_1) \odot (G_2, v_2)$, is the
graph obtained from the disjoint union of $G_1$ and $G_2$ by identifying root
vertices $v_1$ and $v_2$. Sciriha has shown that the coalescence $(G_1, v_1)
\odot (G_2, v_2)$ is a nut graph if $G_1$ and $G_2$ are nut
graphs~\cite{Sciriha2008}.

A \emph{cut} in a graph $G$ is a partition of the vertex set $V(G)$ into two
subsets, and we will allow either subset to be empty. Choosing a subset $C
\subseteq V(G)$ uniquely determines the cut: the two subsets of the partition
are $C$ and $V(G) \setminus C$. By a slight abuse of notation, we will call the
set $C$ a cut, implicitly identifying it with the partition $\{ C, V(G)
\setminus C \}$. The \emph{cut set} of a cut $C$, denoted $\delta(C)$, is the
subset of edges $e = uv$ with $u \in C$ and $v \in V(G) \setminus C$. Note that
$\delta(C) = \delta(V(G) \setminus C)$. To confuse matters, the cut set is
simply called a cut by some authors~\cite{Diestel2025}. In the present work, the
set $\omega(C) \coloneqq E(G) \setminus \delta(C)$ will be called the
\emph{frustration set} of the cut $C$. Some authors study a related notion of an
\emph{odd-cycle edge transversal}, which is a set of edges whose removal results
in a bipartite graph~\cite{Faria2012}. Every frustration set is an odd-cycle
edge transversal, but the converse is not true in general. However, the minimum
frustration sets and minimum odd-cycle edge transversals coincide. Note that
$\varphi(G)$ is, in fact, the cardinality of the smallest frustration set of a
graph $G$. Recall that the cardinality of the largest cut set of a graph $G$ is
denoted here by $\kappa(G)$, and, as already pointed out, $\kappa(G) +
\varphi(G) = |E(G)|$.

Graph $G$ is a \emph{planar graph} if $G$ can be embedded in the plane. A graph
$G$ together with a particular embedding is called a \emph{plane graph}. When
the vertices and edges of a plane graph $G$ are erased from the plane, we obtain
one or more connected components, one of which is unbounded. Each such connected
component is called a \emph{face} of $G$. The planar dual of $G$, denoted by
$G^*$, is formed by placing a vertex inside each face of $G$, and connecting
those vertices of $G^*$ whose corresponding faces in $G$ share an edge. Note
that each edge $e$ of $G$ has a corresponding edge $e^*$ in $G^*$; the edge
$e^*$ is called the \emph{dual edge} of $e$. The planar dual $G^*$ need not be a
simple graph, but if $G$ is $3$-connected then $G^*$ is a simple graph. For more
detail, see~\cite{Mohar2001}.

Occasionally, we will refer to the point group symmetry of polyhedral graphs,
when we will use the standard Sch{\"o}nflies group labels (see, for example,
\cite{Atkins1970}).

\subsection{Bipartite edge frustration of planar graphs}

It is known from the classic work of Hadlock on the max-cut problem that
$\kappa(G)$, and therefore $\varphi(G)$, can be computed in polynomial time if
$G$ is a planar graph. Theorems 1--3 and Corollary 1 from \cite{Hadlock1975},
here reformulated in the frustration language, are combined in the following
lemma.

\begin{lemma}[\cite{Hadlock1975}]
\label{lem:hadlock}
Let $G$ be a plane graph and $G^*$ its corresponding planar dual. Let
$\mathcal{P}^* = \{ P^*_1, \ldots, P^*_k \}$ be a collection of non-trivial
edge-disjoint paths in $G^*$ such that each endvertex of each path has odd
degree in $G^*$ and each odd degree vertex of $G^*$ is the endpoint of precisely
one of the paths from $\mathcal{P}^*$. If $\mathcal{P}^*$ is such that
$\sum_{i=1}^k |E(P^*_i)|$ is minimum then
\begin{equation}
\varphi(G) = \sum_{i=1}^k |E(P^*_i)|
\end{equation}
and the set of frustrated edges is $\{ e \in E(G) \mid e^* \in \bigcup_{i=1}^k
E(P^*_i) \}$.
\end{lemma}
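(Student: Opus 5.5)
The plan is to turn the condition ``$G - F$ is bipartite'' into a parity condition on the faces of $G$, and then read that condition in $G^*$ as a $T$-join condition with $T$ the set of odd-degree vertices of $G^*$.

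\textbf{Step 1 (frustration sets as parity sets).} We may assume $G$ is connected; otherwise each component is treated separately, and the dual decomposes compatibly. For $F \subseteq E(G)$, the graph $G - F$ is bipartite if and only if every cycle $C$ of $G$ satisfies $|E(C) \setminus F| \equiv 0 \pmod 2$, that is, $|E(C)\cap F| \equiv |E(C)| \pmod 2$. This condition is linear over $\mathrm{GF}(2)$ in the cycle. Since the cycle space of a connected plane graph is generated by the facial boundary walks, it suffices to impose it on face boundaries.

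For a face $f$, the boundary walk has length $\deg_{G^*}(f)$. A bridge is traversed twice by this walk, and its dual is a loop contributing $2$ to the degree, so the parities agree. The condition therefore reads $\deg_{F^*}(f) \equiv \deg_{G^*}(f) \pmod 2$ for every vertex $f$ of $G^*$, where $F^* = \{e^* : e \in F\}$. In other words, $F^*$ is a $T$-join of $G^*$ with $T$ the set of odd-degree vertices of $G^*$.

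Minimum frustration sets coincide with minimum odd-cycle edge transversals, as noted in the Preliminaries. Hence $\varphi(G)$ equals the minimum size of such a $T$-join, and the minimisers correspond under $e \mapsto e^*$.

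\textbf{Step 2 ($T$-joins versus path systems).} First, if $\mathcal{P}^*$ is a collection of edge-disjoint paths pairing up the vertices of $T$, then its union is a $T$-join. Each endpoint gets degree $1$, interior vertices get even degree, and edge-disjointness lets degrees add.

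Conversely, a nonempty $T$-join $J$ decomposes into such paths. Start at a vertex of $T$ and walk along unused edges of $J$ until stuck. By the parity conditions the walk can only stop at another vertex of $T$. Extract a path between the two endpoints from this trail, delete the path, and note that the remainder is a $(T \setminus \{a,b\})$-join. Induct until $T$ is empty; what is left then has all degrees even and is a union of cycles.

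For a minimum $J$ this leftover is empty. Removing it would give a smaller $T$-join, and the cycles are nonempty because the paths are non-trivial. So minimum $T$-joins are exactly the edge-set unions of minimum-length collections $\mathcal{P}^*$. This gives $\varphi(G) = \sum_i |E(P_i^*)|$ and identifies the frustrated edges as stated.

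\textbf{Main obstacle.} I expect the hardest part to be the careful justification in Step 1 that the face boundaries generate the cycle space, with degrees counted correctly when $G^*$ is not simple. That means accounting for bridges of $G$ (loops in $G^*$) and multiple edges. I would handle it via Euler's formula: the dimension of the cycle space is $m - n + 1 = (\#\text{faces}) - 1$, and any $\#\text{faces}-1$ of the face boundaries are independent.
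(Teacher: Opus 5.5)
The paper gives no proof of this lemma; it imports Hadlock's Theorems 1--3 and Corollary 1. Your argument is essentially Hadlock's classical route: face parity conditions are read in $G^*$ as ``$F^*$ is a $T$-join'' for $T$ the set of odd-degree vertices of $G^*$, and a minimum $T$-join is then split into edge-disjoint paths pairing up $T$. Step 2 and your treatment of loops and bridges are fine.

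There is, however, a false claim at the start of Step 1. It is not true that $G-F$ is bipartite if and only if every cycle $C$ of $G$ has $|E(C)\setminus F|$ even; only the ``if'' direction holds. For $G=K_3$ and $F$ two of its three edges, $G-F$ is bipartite, but the triangle has $|E(C)\setminus F|=1$.

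What the cycle condition actually characterises is that $F=\omega(A)$ for some cut $A$:
\begin{itemize}
\item If $F=\omega(A)$, then $E(C)\setminus F=E(C)\cap\delta(A)$ has even size for every cycle.
\item Conversely, if the condition holds, colour each vertex by the parity of the number of non-$F$ edges on a path from a fixed root. This is well defined precisely because of the cycle condition, and it makes every non-$F$ edge bichromatic and every $F$ edge monochromatic.
\end{itemize}
Since $\varphi(G)$ is by definition the minimum of $|\omega(A)|$, the corrected statement gives at once that frustration sets are exactly the sets $F$ with $F^*$ a $T$-join. So $\varphi(G)$ is the minimum size of a $T$-join, and the edges dual to a minimum path system are literally the frustrated edges of some bipartition, as the last clause of the lemma requires.

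As written, your chain only survives because the false direction is invoked for minimum sets, where it happens to hold. Replace the first sentence of Step 1 with the frustration-set characterisation rather than leaning on the coincidence of minimum frustration sets and minimum odd-cycle transversals.

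Two smaller remarks:
\begin{itemize}
\item Connectedness is not needed, since in any plane graph a cycle is the mod-$2$ sum of the boundaries of the faces inside it. By contrast, your claim that ``the dual decomposes compatibly'' is not automatic, because the dual of a disconnected plane graph is connected.
\item The leftover even subgraph in Step 2 is empty simply because deleting it from a minimum $T$-join would give a smaller $T$-join; the non-triviality of the paths plays no role there.
\end{itemize}
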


\noindent Lemma~\ref{lem:hadlock} is the basis for a polynomial-time algorithm.
Moreover, we will also use Lemma~\ref{lem:hadlock} in the proofs of
Lemma~\ref{lem:antiprismFrustration} in {\S}\ref{sec:5dot4} and
Theorem~\ref{thm:polyhedral} in {\S}\ref{subsec:fowler}.

\subsection{Established bounds for
\texorpdfstring{$\boldsymbol{\varphi(G)}$}{φ(G)}}
\label{subsec:upperBounds}

Study of the bipartite edge frustration of nut graphs can benefit from the
established results. Results on the lower bound for $\kappa(G)$ are easily
re-expressed as upper bounds on bipartite edge frustration $\varphi(G)$; see
also~\cite{Poljak1995}. In these terms, the early result of Erd{\H o}s from
1965~\cite{Erdos1965} translates to $\varphi(G) \leq \lceil \frac{m}{2} -
\frac{n}{4} \rceil$, where $G$ is a graph of order $n$ and size $m$ without
isolated vertices. Later investigations focused on triangle-free
graphs~\cite{Bondy1986,Erdos1988,Erdos1992,Hopkins1982,Locke1982,Staton1980}.
For example, the result of Thomassen~\cite[Corollary 2]{Thomassen2006}
translates to $\varphi(G) \leq \lfloor \frac{7n}{24} + \frac{7}{6} \rfloor$,
where $G$ is a planar, cubic, $3$-connected, triangle-free graph of order $n$
(i.e., a triangle-free, cubic, polyhedral graph). Likewise, the improved result
by Cui and Wang~\cite[Theorem 1.2]{Cui2009} translates to $\varphi(G) \leq
\lfloor \frac{9n}{32} + \frac{9}{16} \rfloor$ for the same class of graphs.

Bipartite edge frustration has been studied extensively for fullerenes. It is
known that $\varphi(G) \geq 6$ if $G$ is a fullerene graph and that $\varphi(G)
\geq 12$ if $G$ is an IPR fullerene~\cite{Doslic2007}. An IPR (Isolated Pentagon
Rule) fullerene is one in which no two pentagonal faces are incident.
In~\cite{Faria2012}, a conjecture from~\cite{Doslic2007} was proved: $\varphi(G)
\leq \sqrt{\frac{12}{5}n}$ if $G$ is a fullerene of order $n$ and that the
equality holds if and only if $G$ is a fullerene of order $60k^2$ for some
integer $k$ with $\Aut(G) \cong I_\mathrm{h}$ (which implies that $G$ is, in
fact, an IPR fullerene). Nicodemos and Stehl{\'{\i}}k~\cite{Nicodemos2018}
extended this result to $\varphi(G) \leq \sqrt{\frac{(p + 3t)n}{5}}$, where $G$
is a cubic polyhedral (i.e.\ planar and $3$-connected) graph of order $n$ with
all faces of size at most $6$, that has $p$ pentagonal and $t$ triangular faces.

\section{Proof of Theorem~\ref{thm:mainResult}}
\label{sec:bigtheorem}

Two preliminary lemmas are required for the proof. With the notation $G + u_1
u_2$ we denote the graph that is obtained from $G$ by adding the edge $u_1 u_2$.
The necessary lemmas are:

\begin{lemma}
\label{lemma:3}
Let $G$ be a bipartite graph with bipartition $V(G) = U \sqcup W$, $|U| \geq 2$.
Let $u_1, u_2 \in U$ and let $\x \in \ker (G + u_1u_2)$. Then $\x(u_1) = 0$ or
$\x(u_2) = 0$.
\end{lemma}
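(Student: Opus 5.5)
We will prove this by summing the local conditions of $G + u_1u_2$ over $W$ and over $U$ and comparing the two sums. Write $H = G + u_1u_2$. The only edge of $H$ inside a class is $u_1u_2$; every other edge of $H$ joins $U$ to $W$. Fix $\x \in \ker H$.

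First we sum the local conditions over all $w \in W$. Each such condition reads $\sum_{v \in H(w)} \x(v) = 0$, and all neighbours of $w$ lie in $U$. Summing over $w \in W$ therefore gives
\begin{equation}
\sum_{u \in U} d_W(u)\, \x(u) = 0,
\end{equation}
where $d_W(u)$ is the number of neighbours of $u$ in $W$. This identity alone is not decisive, since it involves degrees. The more useful approach is the standard quadratic-form trick.

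Next we use the local conditions at vertices of $U$, which involve the extra edge. For $u \in U \setminus \{u_1,u_2\}$ the condition is $\sum_{w \in G(u)} \x(w) = 0$. At $u_1$ the condition is $\x(u_2) + \sum_{w \in G(u_1)} \x(w) = 0$, and symmetrically at $u_2$. Multiply the condition at each $u \in U$ by $\x(u)$ and sum over $U$. Multiply the condition at each $w \in W$ by $\x(w)$ and sum over $W$. Since $\x \in \ker H$, both totals are zero.

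We now evaluate the two totals. The $U$-total counts each $U$--$W$ edge $uw$ once as $\x(u)\x(w)$, and it also contains the extra contribution $\x(u_1)\x(u_2) + \x(u_2)\x(u_1) = 2\x(u_1)\x(u_2)$. The $W$-total counts each $U$--$W$ edge once as $\x(w)\x(u)$ and nothing else. Setting $S = \sum_{uw \in E(G)} \x(u)\x(w)$, we obtain $S + 2\x(u_1)\x(u_2) = 0$ and $S = 0$. Subtracting gives $\x(u_1)\x(u_2) = 0$, which is the claim.

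There is no real obstacle here. The only point needing care is checking that the edge $u_1u_2$ contributes exactly twice to the $U$-sum (once from each endpoint's condition) and never to the $W$-sum. This holds because both endpoints of $u_1u_2$ lie in $U$ and $G$ has no edges inside $W$. The hypothesis $|U| \geq 2$ is needed only so that $u_1 \neq u_2$ and the added edge makes sense.
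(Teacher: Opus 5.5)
Your proof is correct and is the paper's argument written in coordinates instead of block matrices. Split the kernel vector as $\w = (\x, \y)$ over $U$ and $W$, as the paper does. Then your $U$-total is $\x^\intercal(\mathbf{P}\x + \mathbf{B}\y)$ and your $W$-total is $\y^\intercal\mathbf{B}^\intercal\x = \x^\intercal\mathbf{B}\y$, so subtracting them is exactly the paper's reduction to $\x^\intercal\mathbf{P}\x = 2\x(u_1)\x(u_2) = 0$. The opening degree-weighted sum over $W$ plays no role and can be deleted.
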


\begin{proof}
The adjacency matrix of $G$ can be written in block form
\begin{equation}
\mathbf{A}(G) = \begin{bmatrix}
\mathbf{0} & \mathbf{B} \\
\mathbf{B}^\intercal & \mathbf{0}
\end{bmatrix}.
\end{equation}
Suppose that vertices $u_1$ and $u_2$ correspond to the first two rows (columns)
of the adjacency matrix. The adjacency matrix of $G + u_1u_2$ is then
\begin{equation}
\mathbf{A}(G+ u_1u_2) = \begin{bmatrix}
\mathbf{P} & \mathbf{B} \\
\mathbf{B}^\intercal & \mathbf{0}
\end{bmatrix},
\end{equation}
where $P_{i,j} = 1$ if $(i, j) \in \{ (1, 2), (2, 1)\}$, and $P_{i,j} = 0$
otherwise.

Suppose that $\w \in \ker (G+ u_1u_2)$. The vector $\w$ can be written in block
form as $\w = \begin{bmatrix} \x \\ \y \end{bmatrix}$.
The equation $\mathbf{A}(G+ u_1u_2) \w = \mathbf{0}$ implies
\begin{align}
\mathbf{P} \x + \mathbf{B} \y & = \mathbf{0}, \label{eq:line1} \\
\mathbf{B}^\intercal \x & =  \mathbf{0}, \label{eq:line2}
\end{align}
and from \eqref{eq:line1} we have
\begin{equation}
\x^\intercal \mathbf{P} \x + \x^\intercal \mathbf{B} \y = 0.
\label{eq:theMainEq}
\end{equation}
Note that~\eqref{eq:line2} implies $\x^\intercal \mathbf{B} =
(\mathbf{B}^\intercal \x)^\intercal = \mathbf{0}$, and thus $\x^\intercal
\mathbf{B} \y = 0$. Equation~\eqref{eq:theMainEq} reduces to $\x^\intercal
\mathbf{P} \x = 0$. The matrix $\mathbf{P}$ has only two non-zero entries, and
this reduces to: $2\x(u_1)\x(u_2) = 0$. The result follows.
\end{proof}

\begin{lemma}
\label{lemma:4}
Let $G$ be a bipartite graph with bipartition $V(G) = U \sqcup W$, $|U| \geq 2$.
Let $u_1, u_2 \in U$ and let $G' \coloneqq G + u_1u_2$. Then there exists a
vertex $u_0 \in V(G')$ such that $\x(u_0) = 0$ for all $\x \in \ker G'$.
\end{lemma}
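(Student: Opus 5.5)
The plan is to upgrade the pointwise conclusion of Lemma~\ref{lemma:3} to a uniform one by using the linear structure of $\ker G'$. Lemma~\ref{lemma:3} says that each individual kernel vector vanishes at $u_1$ or at $u_2$. It does not say which, and a priori the choice could depend on the vector. So the task is to show that a single vertex, either $u_1$ or $u_2$, works for all of $\ker G'$ at once. The candidate for $u_0$ will therefore be one of $u_1, u_2$.

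Concretely, define the two subspaces
\begin{equation}
K_1 \coloneqq \{ \x \in \ker G' \mid \x(u_1) = 0 \}, \qquad K_2 \coloneqq \{ \x \in \ker G' \mid \x(u_2) = 0 \}.
\end{equation}
Each is the kernel of a linear functional (evaluation at a vertex) restricted to $\ker G'$, hence a linear subspace of $\ker G'$. By Lemma~\ref{lemma:3}, applied with the same $G$, $u_1$, $u_2$, we have $\ker G' = K_1 \cup K_2$.

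The key step is the standard fact that a vector space over $\mathbb{R}$ (indeed over any field) is never the union of two proper subspaces. Suppose both $K_1$ and $K_2$ were proper. Then there exist $\x_1 \in \ker G' \setminus K_2$ and $\x_2 \in \ker G' \setminus K_1$. Since $\ker G' = K_1 \cup K_2$, this forces $\x_1 \in K_1$ and $\x_2 \in K_2$. Now consider $\x_1 + \x_2 \in \ker G'$:
\begin{itemize}
\item if it lay in $K_1$, then $\x_2 = (\x_1+\x_2) - \x_1 \in K_1$, a contradiction;
\item if it lay in $K_2$, then $\x_1 = (\x_1+\x_2) - \x_2 \in K_2$, a contradiction.
\end{itemize}
Hence $\ker G' = K_1$ or $\ker G' = K_2$. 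We set $u_0 = u_1$ in the first case and $u_0 = u_2$ in the second, which gives the claim. The argument also covers the trivial case $\ker G' = \{\mathbf{0}\}$, where either choice of $u_0$ works.

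There is no real obstacle here. The only subtle point is recognising that Lemma~\ref{lemma:3} alone is not the statement, since a priori the vanishing vertex could depend on $\x$. The union-of-two-subspaces argument is exactly what closes that gap. The hypothesis $|U| \geq 2$ is used only so that two vertices $u_1, u_2 \in U$ exist, making $G'$ well defined.
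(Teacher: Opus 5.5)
Your proof is correct and is essentially the paper's argument. The paper also combines Lemma~\ref{lemma:3} with the observation that if one kernel vector vanishes at $u_1$ but not at $u_2$, and another is nonzero at $u_1$, then their sum violates Lemma~\ref{lemma:3}. Your framing as ``$\ker G'$ cannot be the union of two proper subspaces $K_1, K_2$'' is a cleaner packaging of that same step, and it absorbs the trivial-kernel and both-vanish cases without separate treatment.
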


\begin{proof}
If $\eta(G') = 0$ there is nothing to prove. Suppose that $\eta(G') > 0$ and let
$\x \in \ker G'$ be a non-trivial vector such that $\x(u_1) \neq 0$ or $\x(u_2)
\neq 0$. If no such vector exists, then all kernel eigenvectors vanish on both
$u_1$ and $u_2$; taking $u_0$ to be either finishes the proof. By
Lemma~\ref{lemma:3}, $\x(u_1) = 0$ or $\x(u_2) = 0$. Without loss of generality,
suppose that $\x(u_1) = 0$. But then $\x(u_2) \neq 0$. Now take $u_0 = u_1$. To
obtain a contradiction, suppose there exists $\y \in \ker G'$, such that
$\y(u_0) = \y(u_1) \neq 0$. Lemma~\ref{lemma:3} gives us $\y(u_1) \y(u_2) = 0$,
therefore $\y(u_2) = 0$. Now consider the vector $\x + \y$. It is clear that $\x
+ \y \in \ker G'$. But vector $\x + \y$ does not satisfy Lemma~\ref{lemma:3}, a
contradiction.
\end{proof}

\noindent The two lemmas enable a concise proof of Theorem~\ref{thm:mainResult},
and hence Corollary~\ref{thm:mainCorollary}.

\begin{proof}[Proof of Theorem~\ref{thm:mainResult}]
Let $G$ be a core graph. Suppose that $\varphi(G) = 1$. Therefore, $G$ is
obtained from a bipartite graph $H$ by adding an edge that connects vertices
from the same set of the bipartition. By Lemma~\ref{lemma:4}, there exists a
vertex $u_0 \in V(G)$ such that $\x(u_0) = 0$ for all $\x \in \ker G$. This
contradicts the fact that $G$ is a core graph. Therefore, $\varphi(G) \neq 1$.
Furthermore, if $G$ is a nut graph, $\varphi(G) \neq 1$ implies $\varphi(G) \geq
2$.
\end{proof}

\section{Bipartite edge frustration of small nut graphs}
\label{frust}

It has been known since their first appearance in the
literature~\cite{Gutman1998} that nut graphs are not bipartite. In other words,
$\varphi(G) \geq 1$ if $G$ is a nut graph. Corollary~\ref{thm:mainCorollary}
strengthens this result to: $\varphi(G) \geq 2$ if $G$ is a nut graph.

In the associated GitHub repository~\cite{GitHubSupplement}, we provide an
implementation of an algorithm for finding bipartite edge frustration of general
graphs, and a polynomial-time algorithm for planar (and plane) graphs; these
codes were used to produce the tables for nut graphs in the present section.
Counts of nut graphs for orders $n \leq 13$ (i.e., the values $|\mathcal{N}_n|$)
are given in~\cite{CoolFowlGoed-2017} and collections of nut graphs up to order
$12$ are available from the House of Graphs~\cite{HoG1,HoG2}. In
Table~\ref{tbl:tbl1} we stratify the counts of nut graphs by their bipartite
edge frustration. Note that nut graphs do not exist for orders $n < 7$. The
table shows that the bound in Corollary~\ref{thm:mainCorollary} is best
possible, and inspection of the data suggests that for every integer $k \geq 2$
there exists a nut graph $G$ with $\varphi(G) = k$, a supposition that is proved
in \S\ref{sec:5} (see Proposition~\ref{prop:allExist}).

\begin{table}[!ht]
\centering
\begin{tabular}{|rr||rrrrrr|}
\hline
\multicolumn{2}{|c||}{$n$} & 7 & 8 & 9 & 10 & 11 & 12 \\
\hline
\multicolumn{2}{|c||}{$|\mathcal{N}_n|$} & 3 & 13 & 560 & $12\,551$ & $2\,060\,490$ & $208\,147\,869$ \\
\hline
\hline
\multirow[t]{23}{*}{$\varphi$} & 2 & 2 & 1 & 41 & 20 & 2\,406 & 1\,685 \\
& 3 &  &  & 80 & 90 & 14\,218 & 22\,530 \\
& 4 & 1 & 9 & 180 & 1\,051 & 83\,924 & 365\,516 \\
& 5 &  &  & 122 & 2\,104 & 211\,743 & 1\,988\,453 \\
& 6 &  & 3 & 96 & 3\,788 & 406\,562 & 7\,980\,839 \\
\cline{1-8}
& 7 &  &  & 25 & 2\,683 & 472\,409 & 19\,454\,433 \\
& 8 &  &  & 15 & 1\,938 & 420\,866 & 35\,148\,273 \\
& 9 &  &  & 1 & 590 & 259\,539 & 43\,644\,801 \\
& 10 &  &  &  & 230 & 127\,804 & 42\,410\,292 \\
& 11 &  &  &  & 48 & 43\,870 & 29\,844\,346 \\
\cline{1-8}
& 12 &  &  &  & 8 & 13\,435 & 16\,852\,683 \\
& 13 &  &  &  & 1 & 2\,975 & 7\,138\,318 \\
& 14 &  &  &  &  & 622 & 2\,454\,715 \\
& 15 &  &  &  &  & 100 & 655\,151 \\
& 16 &  &  &  &  & 13 & 154\,506 \\
\cline{1-8}
& 17 &  &  &  &  & 2 & 26\,146 \\
& 18 &  &  &  &  & 1 & 4\,489 \\
& 19  &  &  &  &  & 1 & 576 \\
& 20 &  &  &  &  &  & 105 \\
& 21 &  &  &  &  &  & 7 \\
\cline{1-8}
& 22 &  &  &  &  &  & 2 \\
& 23 &  &  &  &  &  & 2 \\
& 24 &  &  &  &  &  & 1 \\
\hline
\end{tabular}%

\caption{Bipartite edge frustration of small nut graphs.}
\label{tbl:tbl1}
\end{table}

Counts of chemical nut graphs on orders $n \leq 22$ are given
in~\cite{CoolFowlGoed-2017}. In Table~\ref{tbl:2}, we report their
stratification by bipartite edge frustration. An example of a small chemical nut
graph for each bipartite edge frustration $2 \leq \varphi \leq 6$ is shown in
Figure~\ref{fig:smallChemicalNuts}. Note that chemical nut graphs do not exist
for orders $n < 9$ or for order $n = 10$. Table~\ref{tbl:2} hints that perhaps
all bipartite edge frustrations $k \geq 2$ can be realised even within the realm
of chemical nut graphs (and this is indeed true; see
Theorem~\ref{prop:chemicalRealm}).

The enumeration of chemical graphs (not necessarily nut graphs), stratified by
bipartite edge frustration, on orders $n \leq 20$ can be found in the GitHub
repository~\cite{GitHubSupplement}. An upper bound on bipartite edge frustration
for chemical graphs is easy to obtain:

\begin{table}[!htb]
\centering
\begin{tabular}{|r|r||r|r|r|r|r|r|}
\hline
& & \multicolumn{6}{c|}{$\varphi$} \\
\cline{3-8}
$n$ & $|\mathcal{N}_n^{\{3\}}|$ & 2 & 3 & 4 & 5 & 6 & 7 \\
\hline
\hline
9 & 1 & 1 & 0 & & & & \\
11 & 8 & 8 & 0 & & & & \\
12 & 9 & 4 & 5 & 0 & & & \\
13 & 27 & 18 & 9 & 0 & & & \\
14 & 23 & 7 & 9 & 7 & & & \\
15 & 414 & 283 & 113 & 18 & 0 & & \\
16 & 389 & 124 & 213 & 52 & 0 & & \\
17 & 7\,941 & 3\,520 & 3\,529 & 880 & 12 &  & \\
18 & 8\,009 & 1\,369 & 3\,414 & 3\,076 & 147 & 3 &  \\
19 & 67\,970 & 24\,579 & 32\,851 & 10\,294 & 246 & 0 & \\
20 & 51\,837 & 8\,228 & 23\,268 & 18\,367 & 1\,934 & 40 &  \\
21 & 1\,326\,529 & 378\,123 & 645\,929 & 287\,518 & 14\,864 & 95 & 0 \\
22 & 1\,372\,438 & 150\,695 & 562\,064 & 587\,284 & 70\,974 & 1\,421 & 0 \\
\hline
\end{tabular}%

\caption{Bipartite edge frustration of chemical nut graphs. A zero entry in the
table means that though chemical graphs exist for these parameters, none are nut
graphs.}
\label{tbl:2}
\end{table}

\begin{figure}[!ht]
\centering
\subcaptionbox{$n = 9, \varphi = 2$}{\input{img/chemical9_bef2.tikz}}
\subcaptionbox{$n = 12, \varphi = 3$}{\input{img/chemical12_bef3.tikz}}
\vspace{0.5\baselineskip}
\subcaptionbox{$n = 14, \varphi = 4$}{\input{img/chemical14_bef4.tikz}}
\subcaptionbox{$n = 17, \varphi = 5$}{\input{img/chemical17_bef5.tikz}}
\subcaptionbox{$n = 18, \varphi = 6$}{\input{img/chemical18_bef6.tikz}}
\caption{Examples of smallest chemical nut graphs with bipartite edge
frustration $2 \leq \varphi \leq 6$.}
\label{fig:smallChemicalNuts}
\end{figure}

\begin{proposition}
\label{prop:bunting}
Let $G$ be a chemical graph of order $n$, such that $G \not\cong K_4$. Then
$\varphi(G) \leq \lfloor \frac{n}{3} \rfloor$.
\end{proposition}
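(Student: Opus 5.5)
We prove Proposition~\ref{prop:bunting} with the classical local-switching argument for max-cut, followed by a refined count that uses subcubicity and connectivity. The target is to exhibit a bipartition in which every vertex has at most one frustrated edge incident to it, and in which the frustrated edges are sparse enough that there are at most $\lfloor n/3 \rfloor$ of them.

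\emph{Step 1 (local optimality).} Take a bipartition $V(G) = V_1 \sqcup V_2$ that minimises the number of frustrated edges. If some vertex $v$ had at least two frustrated edges, it would have at least as many same-side neighbours as opposite-side neighbours, since $\deg v \leq 3$. If $v$ had strictly more, moving $v$ to the other class would reduce the count. Hence every vertex has at most one frustrated edge, except for degree-$2$ vertices with one neighbour on each side, which are tied. Among all minimum bipartitions, we choose one that also minimises a secondary potential, for instance the number of vertices incident with a frustrated edge. With this choice we arrange that the frustrated edges form a matching $M$. A vertex of degree $\leq 2$ carrying a frustrated edge can be moved without increasing the count, and this freedom will be used below.

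\emph{Step 2 (charging).} Every frustrated edge $uv \in M$ must now be charged to three distinct vertices. The vertices $u$ and $v$ are distinct and, because $M$ is a matching, they are not shared with other frustrated edges. For a third vertex, we use the fact that in a minimum bipartition $u$ and $v$ each have all their other neighbours on the opposite side, and each has degree $3$. If $u$ had degree $\leq 2$, moving $u$ would create no new frustrated edge with multiplicity beyond what we already have, and we could destroy $uv$. So each vertex of $M$ has two opposite-side neighbours. We then charge each frustrated edge with $u$, $v$, and half of each of the at most four opposite neighbours. We need each non-$M$ vertex to receive a total charge of at most $1$. A non-$M$ vertex $w$ has at most $3$ neighbours, each contributing at most $1/2$; we lower the per-neighbour contribution to $1/4$ and verify that each frustrated edge still collects at least one unit from its neighbourhood.

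This requires showing that a non-$M$ vertex cannot have all three of its neighbours in $M$ in a way that leads to overcounting. If $w$ has all three neighbours in $M$ and all three lie on the opposite side, then switching $w$ together with suitable endpoints would give a better bipartition. We expect to establish this by a small case analysis. The exceptional configuration in which everything is tight is $K_4$: four vertices, $\varphi = 2 > \lfloor 4/3 \rfloor$. We will show that any tight configuration forces a $K_4$ component, and connectivity then gives $G \cong K_4$.

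\emph{Main obstacle.} The delicate part is the charging in Step 2, namely ruling out dense local clusters in which several frustrated edges share opposite-side neighbours. If the charging becomes too messy, our first fallback is induction on $n$. We would find a triangle or other odd structure, delete three vertices that cover one frustrated edge, apply the bound to the remaining components (handling $K_4$ components separately), and restore the removed vertices so that they cost at most one additional frustrated edge. This gives $\varphi(G) \leq 1 + \lfloor (n-3)/3 \rfloor$. Since cubic graphs have $\varphi \leq n/3$ by known max-cut bounds (Locke, Staton), the induction mainly needs to handle the boundary effects caused by the deleted vertices.
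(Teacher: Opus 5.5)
There is a genuine gap in your proposal. Step~1 is fine, and it needs no tie-breaking: in any minimum bipartition of a subcubic graph every vertex meets at most one frustrated edge (a degree-$2$ vertex with one neighbour on each side meets exactly one). But Step~1 only shows that the frustrated edges form a matching $M$, i.e.\ $\varphi(G)\le n/2$. All the work is in Step~2, and it fails as written.

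\emph{The degree claim is false.} You assert that both endpoints of a frustrated edge have degree $3$. Moving a degree-$2$ endpoint $u$ of $uv$ only transfers the frustration to the other edge at $u$. In $C_5$, for example, every frustrated edge joins two degree-$2$ vertices.

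\emph{The $1/4$-per-adjacency charging cannot work even in principle.} Under it, a vertex outside $V(M)$ gives away at most $3/4$. So if every edge of $M$ collected at least $1$, you would get $|M|\le\tfrac34(n-2|M|)$, i.e.\ $\varphi(G)\le 3n/10$, which is false. Take the prism $C_3\cart K_2$ with triangles $a_1a_2a_3$, $b_1b_2b_3$ and rungs $a_ib_i$, so $n=6$ and $\varphi=2>1.8$. Every minimum bipartition has one frustrated edge per triangle and no frustrated rung, say $a_2a_3$ and $b_2b_3$. The opposite-side neighbours of $a_2a_3$ are $a_1$ (twice) and $b_2,b_3\in V(M)$, so $a_2a_3$ collects only $1/2$.

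\emph{The rest is only promised.} The case analysis and the claim that tight configurations force $K_4$ are not supplied. The fallback induction is not sound as stated either. Re-inserting a deleted triangle whose three external neighbours lie in the same class costs two frustrated edges, not one. Max-cut bounds for cubic graphs also do not apply to the merely subcubic graphs that arise after deletion, or to the statement itself.

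The missing idea is Brooks' theorem, which is what the paper uses and which replaces all of this local analysis. Since $G$ is connected with $\Delta(G)\le 3$ and $G\not\cong K_4$, it has a proper colouring with classes $V_1\sqcup V_2\sqcup V_3$, and you may take $|V_3|\le n/3$. Then $V_1\cup V_2$ induces a bipartite graph and $V_3$ is independent. Put each $v\in V_3$ into whichever of $V_1,V_2$ contains at most one neighbour of $v$, which is possible because $\deg v\le 3$. As $V_3$ is independent, these choices do not interact. So each vertex of $V_3$ accounts for at most one frustrated edge, giving $\varphi(G)\le|V_3|\le\lfloor n/3\rfloor$. In the language of your charging scheme, the right thing to charge frustrated edges to is an independent set of at most $n/3$ vertices whose removal leaves a bipartite graph, and Brooks' theorem hands you that set directly.
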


\begin{proof}
Let $G$ be a chemical graph. If $G$ is bipartite then $\varphi(G) = 0$ and there
is nothing to prove. Assume that $G$ is non-bipartite. By Brooks' Theorem it
follows that either $\chi(G) = 3$, where $\chi(G)$ denotes the chromatic number,
or $G$ is $K_4$, in which case $\chi(G) = 4$. By assumption, $G$ is not $K_4$,
thus $\chi(G) = 3$. Take a proper $3$-colouring of $G$ and divide the vertices
into colour classes $V(G) = V_1 \sqcup V_2 \sqcup V_3$, and, without loss of
generality, we may assume that $|V_3| \leq \frac{n}{3}$. The graph induced on
vertices $V_1 \cup V_2$ is bipartite. We will now recolour vertices of $V_3$,
one by one. Note that all neighbours of a vertex $v \in V_3$ are in $V_1 \cup
V_2$. Let $d_i(v)$, $i \in \{1, 2\}$, be the number of neighbours of $v$ in
$V_i$. Observe that $\min\{ d_1(v), d_2(v) \} \leq 1$. If $d_1(v) \leq 1$, then
recolour the vertex $v$ with colour $1$, otherwise with colour $2$. For each $v
\in V_3$, at most one frustrated edge is obtained. Thus, $\varphi(G) \leq |V_3|
\leq \lfloor \frac{n}{3} \rfloor$.
\end{proof}

Let us define the infinite \emph{bunting graph}, denoted $B_\infty$, as an
infinite graph with vertex set $V(B_\infty) = \mathbb{Z}$ and edge set
$E(B_\infty) = \{ (3k, 3k+1), (3k, 3k+2), (3k, 3k - 2), (3k+1, 3k+2) \mid k \in
\mathbb{Z}\}$. Then the bunting graph of order $n$, denoted $B_n$, is the
subgraph of $B_\infty$ induced on the vertices $\{0, 1, \ldots, n - 1\}$. The
bound in Proposition~\ref{prop:bunting} is sharp, as graph $B_n$ attains the
equality. Furthermore, for every value $0 \leq t \leq \frac{n}{3}$, there exists
a chemical graph $G$ of order $n$ with $\varphi(G) = t$. These graphs can be
obtained from $B_n$ by deleting the appropriate number of frustrated edges. (As
an edge in a minimum frustration set is never a bridge, the graph remains
connected.)

\begin{figure}[!ht]
\centering
\subcaptionbox{$B_\infty$}{\input{img/bunting_inf.tikz}}
\qquad
\subcaptionbox{$B_{11}$}{\input{img/bunting_11.tikz}} 
\caption{The infinite bunting graph $B_\infty$ and the finite bunting graph on
$11$ vertices.}
\end{figure}

An interesting subset of chemical graphs consists of the cubic polyhedra (cubic
$3$-connected simple planar graphs), which serve as models for unsaturated
carbon cages. In particular, cubic polyhedra with all faces pentagonal and
hexagonal are models for fullerene molecules~\cite{Atlas2007}. Nut graphs among
cubic polyhedra and fullerene graphs are available from the House of
Graphs~\cite{HoG1,HoG2}. In Table~\ref{tbl:3} and Table~\ref{tbl:4}, we report
stratified counts for cubic polyhedral nut graphs and fullerene nut graphs,
respectively. All our examples of fullerene nut graphs have some adjacent
pentagons, and hence cannot meet the general fullerene upper
bound~\cite{Faria2012}. We note that among nut fullerenes of order $n \leq 250$
we have found only one that has an odd value of $\varphi$, namely the
$42$-vertex fullerene illustrated in Figure~\ref{fig:fullerFru7}(a).
Figure~\ref{fig:fullerFru7}(b) illustrates the application of
Lemma~\ref{lem:hadlock} on this fullerene.

Enumerations of cubic polyhedral graphs (not necessarily nut graphs), stratified
by bipartite edge frustration, on orders $n \leq 26$, and of fullerene graphs on
orders $n \leq 130$ can be found in the GitHub
repository~\cite{GitHubSupplement}.

Table~\ref{tbl:3} suggests that infinitely many polyhedral nut graphs exist for
any given $\varphi \geq 2$. This is proved here as Theorem~\ref{thm:polyhedral}.
The smallest polyhedral nut graph with $\varphi = 3$ is the Frucht graph (see
Figure~\ref{fig:smallNuts}(f)). More examples of small cubic polyhedral nut
graphs (for $\varphi \in \{2, 4, 5, 6, 7, 8, 9\}$) can be found in
Figure~\ref{fig:nutsSmallCubicPolyhedra}.

\begin{figure}[!htbp]
\centering
\subcaptionbox{}{\input{img/fullerene42_bef7.tikz}}
\qquad 
\subcaptionbox{}{\input{img/fullerene42_bef7_lemma3.tikz}}
\caption{The only known nut fullerene with an odd value of $\varphi$. In this
case $\varphi = 7$. In the spiral numbering~\cite{Atlas2007}, this fullerene is
labelled $42{:}12$. It has $C_\text{s}$ symmetry. The right panel illustrates
the application of Lemma~\ref{lem:hadlock}.}
\label{fig:fullerFru7}
\end{figure}

\begin{table}[!ht]
\centering
\begin{tabular}{|r|r||r|r|r|r|r|r|r|r|r|r|}
\hline
\multirow{2}{*}{$n$} & \multirow{2}{*}{$|\mathcal{N}_n^{\textup{cp}}|$} & \multicolumn{8}{c|}{$\varphi$} \\
\cline{3-10}
 &  & 2 & 3 & 4 & 5 & 6 & 7 & 8 & 9 \\
\hline
\hline
12 & 2 & 1 & 1 &  &  &  &  &  & \\
18 & 285 & 24 & 95 & 157 & 9 &  &  &  & \\
24 & 62\,043 & 848 & 5\,665 & 29\,354 & 22\,042 & 4\,114 & 20 &  & \\
26 & 4 &  &  &  & 1 & 2 & 1 &  & \\
28 & 316 &  &  & 6 & 73 & 190 & 47 &  & \\
30 & 16\,892\,864 & 37\,883 & 331\,689 & 3\,211\,580 & 6\,531\,474 & 5\,709\,061 & 1\,037\,292 & 33\,885 & \\
32 & 3\,676 &  &  & 57 & 459 & 1\,305 & 1\,430 & 417 & 8 \\
34 & 447\,790 & 19 & 86 & 3\,384 & 29\,611 & 136\,594 & 205\,136 & 70\,499 & 2\,461 \\
\hline
\end{tabular}%

\caption{Bipartite edge frustration of cubic polyhedral nut graphs.}
\label{tbl:3}
\end{table}

\begin{figure}[!ht]
\centering
\subcaptionbox{$n = 12, \varphi = 2$} { 
\input{img/polyhedron12_bef2.tikz}
}
\subcaptionbox{$n = 18, \varphi = 4$} { 
\input{img/polyhedron18_bef4.tikz}
}
\subcaptionbox{$n = 18, \varphi = 5$} { 
\input{img/polyhedron18_bef5.tikz}
}
\subcaptionbox{$n = 24, \varphi = 6$} { 
\input{img/polyhedron24_bef6.tikz}
}

\subcaptionbox{$n = 24, \varphi = 7$} { 
\input{img/polyhedron24_bef7.tikz}
}
\subcaptionbox{$n = 30, \varphi = 8$} { 
\input{img/polyhedron30_bef8.tikz}
}
\subcaptionbox{$n = 32, \varphi = 9$} { 
\input{img/polyhedron32_bef9.tikz}
}
\caption{Examples of smallest cubic polyhedral nut graphs with bipartite edge
frustration $2 \leq \varphi \leq 9$.}
\label{fig:nutsSmallCubicPolyhedra}
\end{figure}

\begin{table}[!ht]
\centering
\begin{tabular}{|r|r||r|r|r|r|r|r|r|}
\hline
& & \multicolumn{7}{c|}{$\varphi$} \\
\cline{3-9}
$n$ & $|\mathcal{N}_n^\mathrm{\,full}|$ & 6 & 7 & 8 & 10 & 12 & 14 & 16 \\
\hline
\hline
 36 & 1 & 1 &  &  &  &  &  &   \\
 42 & 1 &  & 1 &  &  &  &  &   \\
 44 & 1 & 1 &  &  &  &  &  &   \\
 48 & 2 & 2 &  &  &  &  &  &   \\
 52 & 2 &  &  & 2 &  &  &  &   \\
 \cline{1-9}
 60 & 6 & 3 &  & 3 &  &  &  &   \\
 72 & 2 & 1 &  & 1 &  &  &  &   \\
 82 & 1 &  &  &  & 1 &  &  &   \\
 84 & 8 & 3 &  & 3 & 2 &  &  &   \\
 96 & 5 & 1 &  & 1 & 3 &  &  &   \\
 \cline{1-9}
  108 & 7 & 3 &  & 2 & 2 &  &  &   \\
 120 & 5 & 1 &  & 2 & 2 &  &  &   \\
 132 & 14 & 3 &  & 3 & 5 & 3 &  &   \\
 144 & 6 & 1 &  & 2 & 2 & 1 &  &   \\
 156 & 11 & 3 &  & 2 & 3 & 3 &  &   \\
  \cline{1-9}
  160 & 1 &  &  &  & 1 &  &  &   \\
 168 & 11 & 1 &  & 1 & 5 & 4 &  &   \\
 180 & 16 & 3 &  & 3 & 2 & 6 & 2 &   \\
 192 & 8 & 1 &  & 2 & 2 & 2 & 1 &   \\
 204 & 19 & 3 &  & 2 & 6 & 7 & 1 &   \\
  \cline{1-9}
  216 & 9 & 1 &  & 2 & 2 & 2 & 2 &   \\
 228 & 21 & 3 &  & 3 & 2 & 6 & 5 & 2 \\ 
 240 & 16 & 1 &  & 2 & 5 & 4 & 2 & 2 \\
 \hline
\end{tabular}%

\caption{Bipartite edge frustration of fullerene nut graphs.}
\label{tbl:4}
\end{table}

The House of Graphs also provides collections of small regular nut graphs. In
the~\hyperref[appendix:a]{Appendix}, we report stratification tables (similar to
Tables~\ref{tbl:tbl1} and~\ref{tbl:2}) for regular nut graphs. There are gaps in
some columns, i.e., missing combinations of $n$ and $\varphi$ for certain
degrees. The following explains this.

\begin{proposition}
\label{prop:explainMissingFrust}
The following statements hold.
\begin{enumerate}[label=(\alph*)]
\item Let $G$ be an $r$-regular graph where $r \equiv 0 \pmod{4}$. Then
$\varphi(G)$ is even.
\item Let $G$ be an $r$-regular graph where $r \equiv 2 \pmod{4}$. Then
$\varphi(G) \equiv |V(G)| \pmod{2}$. Moreover, if $\varphi(G)$ is odd, then
$\varphi(G) \geq \frac{r}{2}$.
\item Let $G$ be an $r$-regular graph where $r$ and $\varphi(G)$ are both odd.
Then $\varphi(G) \geq r$.
\end{enumerate}
\end{proposition}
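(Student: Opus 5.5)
Fix a bipartition $V(G) = V_1 \sqcup V_2$ that attains $\varphi(G)$, and let $F$ be its frustration set, so $|F| = \varphi(G)$. Let $n_i = |V_i|$, let $f_i$ be the number of frustrated edges inside $V_i$, and let $c = |\delta(V_1)|$ be the number of crossing edges. The whole argument rests on two facts. The first is degree counting in each class: $r n_1 = 2 f_1 + c$ and $r n_2 = 2 f_2 + c$. The second is a local optimality condition: moving a single vertex $v$ to the other class must not decrease the number of frustrated edges. Hence every vertex has at most $\lfloor r/2 \rfloor$ neighbours in its own class.

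For (a) and the parity part of (b), subtract the two counting identities to get $r(n_1 - n_2) = 2(f_1 - f_2)$. This gives
\begin{equation}
\varphi(G) = f_1 + f_2 \equiv f_1 - f_2 = \tfrac{r}{2}(n_1 - n_2) \pmod{2}.
\end{equation}
If $r \equiv 0 \pmod 4$, then $r/2$ is even, so $\varphi(G)$ is even. If $r \equiv 2 \pmod 4$, then $r/2$ is odd, so $\varphi(G) \equiv n_1 - n_2 \equiv n_1 + n_2 = |V(G)| \pmod 2$. This part is routine.

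For the lower bounds, consider the frustrated subgraph $H = (V(G), F)$, and let $d_H(v)$ be the number of same-class neighbours of $v$, so $d_H(v) \le \lfloor r/2 \rfloor$.

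For (c), take $r$ odd. Then $r n_1 = 2 f_1 + c$ gives $c \equiv n_1 \pmod 2$, and likewise $c \equiv n_2 \pmod 2$. I would rather work in the form $\sum_{v \in V_1} d_H(v) = 2 f_1$ with $d_H(v) \le (r-1)/2$. For a vertex in $V_1$, the number of crossing edges at $v$ is $r - d_H(v)$, which is odd exactly when $d_H(v)$ is even. Since $\varphi$ is odd, one of $f_1, f_2$ is odd, say $f_1$, so $H[V_1]$ has an odd number of edges. I need to turn this into a bound of $r$ on $f_1 + f_2$.

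My first attempt is a switching argument on a component of $H$. Moving a whole set $S \subseteq V_1$ to $V_2$ changes the frustration count by $e(S, V_2) - e(S, V_1 \setminus S)$, where $e(X,Y)$ counts edges between $X$ and $Y$. Take $S = V_1$, which simply swaps the classes, or subsets of it. The idea is to use parity: $e(S, V_2) = r|S| - 2e(S) - e(S, V_1 \setminus S)$. A cleaner target is this: if $f_1$ is odd, then some component $K$ of $H[V_1]$ has an odd number of edges. Moving $K$ to $V_2$ gives the inequality
\begin{equation}
e(K, V_2) \ge e(K, V_1 \setminus K) = 0,
\end{equation}
which is weak.

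A better approach is to move a suitable single vertex, or to compare the numbers of crossing edges in the two classes. Since $r n_1 - 2 f_1 = r n_2 - 2 f_2$, we have $r(n_1 - n_2) = 2(f_1 - f_2)$. With $r$ odd, $n_1 - n_2$ must be even, and $f_1 - f_2$ is then a multiple of $r$. Because $\varphi$ is odd, $f_1 \ne f_2$, so $|f_1 - f_2| \ge r$, and therefore $\varphi(G) \ge |f_1 - f_2| \ge r$. This is the clean argument, and it needs no local optimality at all.

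The second part of (b) follows the same way. From $\tfrac{r}{2}(n_1 - n_2) = f_1 - f_2$, if $\varphi$ is odd then $f_1 \ne f_2$, so $|f_1 - f_2|$ is a nonzero multiple of $r/2$. Hence $\varphi(G) \ge |f_1 - f_2| \ge r/2$.

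The only point needing care is the step $f_1 \neq f_2$. This holds because $f_1 + f_2$ is odd, so $f_1$ and $f_2$ have different parities. The rest is bookkeeping.
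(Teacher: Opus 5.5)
Your final argument is correct and is essentially the paper's proof. The paper uses the same double count $r(n_b - n_w) = 2(\varphi_b - \varphi_w)$ over an optimal bipartition, reads off (a) and the parity claim in (b) from it, and gets both lower bounds from the fact that $\varphi_b - \varphi_w$ is a nonzero multiple of $r/2$ (resp.\ $r$) when $\varphi(G)$ is odd; the local-optimality bound $d_H(v) \le \lfloor r/2 \rfloor$ and the component-switching detour are never used and can simply be deleted.
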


\begin{proof}
Colour the vertices of graph $G$ black and white so that there are $\varphi(G) =
\varphi_b + \varphi_w$ frustrated edges, comprising $\varphi_b$ black-black
edges and $\varphi_w$ white-white. Let $n_b$ and $n_w$ be respective numbers of
black and white vertices. Counting the edges connecting black and white vertices
in two ways, we obtain
\begin{equation}
r ( n_b  - n_w )  = 2 (\varphi_b - \varphi_w).
\label{eq:fruParityEq}
\end{equation}
Claims (a) and (b) follow by straightforward parity arguments.

To prove the second assertion in (b), suppose that $\varphi(G)$ is odd.
By~\eqref{eq:fruParityEq}, $\varphi_b - \varphi_w$ is a multiple of
$\frac{r}{2}$. Without loss of generality, we may assume that $\varphi_b \geq
\varphi_w$. As $\varphi_b, \varphi_w \geq 0$, it follows that $\varphi_b =
\varphi_w + k \frac{r}{2}$ for some non-negative integer $k$. Suppose that $k =
0$. Then $\varphi(G) = 2\varphi_b$. But we assumed that $\varphi(G)$ is odd,
therefore $k \geq 1$. This implies $\varphi(G) \geq 2\varphi_w + \frac{r}{2}
\geq \frac{r}{2}$.

We now prove (c). If $r$ is odd then \eqref{eq:fruParityEq} implies $\varphi_b
\equiv \varphi_w \pmod{r}$. As in the previous paragraph, we may assume that
$\varphi_b \geq \varphi_w$, from which we obtain $\varphi_b = \varphi_w + k r$
for $k \geq 1$. This implies $\varphi(G) \geq 2\varphi_w + r \geq r$.
\end{proof}

\section{Nut and core graphs of prescribed bipartite edge frustration}
\label{sec:5}

In this section, we show that every bipartite edge frustration
$\widetilde{\varphi} \geq 2$ is realised by some (chemical) nut graph. We also
show that for every $\widetilde{\eta} \geq 2$ and every $\widetilde{\varphi}
\neq 1$ we can find some core graph $G$ with $\eta(G) = \widetilde{\eta}$ and
$\varphi(G) = \widetilde{\varphi}$. For $3 \leq d \leq 8$, we also characterise
all values $\widetilde{\varphi}$ that are realised by some $d$-regular nut
graph. Next, we show that an established literature construction preserves
bipartite edge frustration. Finally, we show that every $\widetilde{\varphi}
\geq 2$ is realised by some cubic polyhedral nut graph. But first, we show that
bipartite edge frustration is additive with respect to the coalescence
construction.

\subsection{The coalescence construction}

The following lemma, which follows by repeated application of Lemma~0 from
\cite{Doslic2010}, shows that under certain circumstances bipartite edge
frustration is additive. We include a short proof for completeness.

\begin{lemma}
\label{lem:blockAdditive}
Let $B_1, B_2, \ldots, B_k$ be the blocks of a graph $G$. Then $\varphi(G) =
\sum_{i=1}^k \varphi(B_i)$.
\end{lemma}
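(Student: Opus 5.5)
We prove the two inequalities $\varphi(G) \leq \sum_i \varphi(B_i)$ and $\varphi(G) \geq \sum_i \varphi(B_i)$ separately. The key facts are that the blocks partition $E(G)$ and that the block--cut-vertex tree is a forest, so bipartitions of the blocks can be glued along cut vertices.

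\emph{Lower bound.} Take a bipartition $V(G) = V_1 \sqcup V_2$ attaining $\varphi(G)$, and let $\omega$ be its frustration set. Restricting the bipartition to $V(B_i)$ gives a bipartition of $B_i$. Its frustration set is exactly $\omega \cap E(B_i)$, because $B_i$ is an induced subgraph of $G$ (blocks are induced). Hence $|\omega \cap E(B_i)| \geq \varphi(B_i)$. Since the edge sets $E(B_i)$ partition $E(G)$, summing over $i$ gives $\varphi(G) = |\omega| \geq \sum_i \varphi(B_i)$.

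\emph{Upper bound.} For each block $B_i$, fix a bipartition of $V(B_i)$ achieving $\varphi(B_i)$, encoded as a $2$-colouring $c_i \colon V(B_i) \to \{1,2\}$. Swapping the two colours of $c_i$ preserves the frustration count. We glue these colourings into a single colouring $c$ of $V(G)$, one connected component at a time. Root the block--cut-vertex tree of the component at some block and traverse it from the root, for instance by breadth-first search. When a block $B_j$ is reached through a cut vertex $v$ already coloured by its parent block, replace $c_j$ by its swap if necessary so that $c_j(v) = c(v)$. Then set $c = c_j$ on the still-uncoloured vertices $V(B_j) \setminus \{v\}$.

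Two facts make this well defined.
\begin{itemize}
\item Two distinct blocks share at most one vertex, which is a cut vertex.
\item The block--cut-vertex tree is a tree, so each newly attached block meets the already coloured part in exactly the single cut vertex $v$ through which it was reached.
\end{itemize}
So no conflicts arise, and the restriction of $c$ to each $V(B_i)$ equals $c_i$ or its swap. Every edge of $G$ lies in exactly one block, so the number of frustrated edges of $c$ is $\sum_i \varphi(B_i)$. Therefore $\varphi(G) \leq \sum_i \varphi(B_i)$.

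The only delicate point is the gluing step in the upper bound: it relies on the acyclicity of the block--cut-vertex structure. If $G$ is disconnected, we apply the argument to each component independently and add the results.
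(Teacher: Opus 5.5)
Your proof is correct, but it follows a different route from the paper's.

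The paper works in the odd-cycle-transversal formulation. Every cycle of $G$ lies entirely in one block, and the blocks partition $E(G)$. So a set $S \subseteq E(G)$ meets every odd cycle of $G$ exactly when each $S \cap E(B_i)$ meets every odd cycle of $B_i$, and the minimisation splits blockwise at once.

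You stay with bipartitions instead. Your lower bound is pure restriction and uses no block structure at all. The appeal to blocks being induced is unnecessary, since the frustration set of the restricted bipartition is $\omega \cap E(B_i)$ by definition. So this half actually shows that $\varphi$ is superadditive over any decomposition of $E(G)$ into edge-disjoint subgraphs. The block structure is needed only for your upper bound, through the colour swaps glued along the block--cut-vertex tree.

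The paper's argument is shorter because the characterisation of bipartite graphs as those without odd cycles silently does the gluing for it. The fact that each cycle lies in one block plays the role of your tree acyclicity. Your argument is self-contained and constructive: it explicitly assembles a minimum frustration set of $G$ from minimum frustration sets of its blocks.
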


\begin{proof}
Recall that a block is a maximal subgraph that has no
cut-vertices~\cite{West2001}. A block can be an isolated vertex, a bridge, or a
maximal $2$-connected subgraph. Any given cycle, and thus any odd cycle, of $G$
is entirely contained in a single block. Deleting edges from a block has no
effect on cycles in the other blocks. The claim follows.
\end{proof}

Recall that $(G_1, v_1) \odot (G_2, v_2)$ is a nut graph if $G_1$ and $G_2$ are
nut graphs. Vertices $v_1 \in V(G_1)$ and $v_2 \in V(G_2)$ may be chosen
arbitrarily. If the bipartite edge frustration of $G_1$ and $G_2$ is known, then
the value for the coalesced graph is easily computed. Observe that the vertex
which emerged by identification of $v_1$ and $v_2$ is a cut-vertex in $(G_1,
v_1) \odot (G_2, v_2)$. Lemma~\ref{lem:blockAdditive} thus yields the following.

\begin{corollary}
\label{lem:coalAdditive}
Let $G_1$ and $G_2$ be graphs and let $v_1 \in V(G_1)$ and $v_2 \in V(G_2)$.
Then
\begin{equation}
\varphi((G_1, v_1) \odot (G_2, v_2)) = \varphi(G_1) + \varphi(G_2).
\end{equation}
\end{corollary}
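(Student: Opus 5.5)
The plan is to reduce the statement directly to Lemma~\ref{lem:blockAdditive}. Write $G \coloneqq (G_1, v_1) \odot (G_2, v_2)$ and let $v$ be the vertex obtained by identifying $v_1$ and $v_2$. First I would observe that $G - v$ is the disjoint union of $G_1 - v_1$ and $G_2 - v_2$, with no edges between the two parts. Hence every block of $G$ lies entirely in the copy of $G_1$ or entirely in the copy of $G_2$. The reason is that a block is $2$-connected, a bridge, or an isolated vertex. A $2$-connected subgraph meeting both sides would survive deletion of $v$ as a connected subgraph, which is impossible since $G - v$ has no edges between the sides. A bridge cannot join the two sides either, because every edge of $G$ comes from $G_1$ or from $G_2$.

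Next I would argue that the blocks of $G$ are precisely the blocks of $G_1$ together with the blocks of $G_2$, viewed inside $G$. Within the copy of $G_i$, the subgraph structure is unchanged by the coalescence. A subgraph of $G_i$ is a maximal subgraph without cut-vertices in $G_i$ if and only if it has the same property in $G$, since by the previous step no larger such subgraph can cross into the other side. A degenerate case needs a remark: if some $G_i$ is $K_1$, its single block is an isolated vertex with $\varphi = 0$. That block disappears in $G$, but it contributes nothing to either side of the equation.

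Applying Lemma~\ref{lem:blockAdditive} three times, to $G$, $G_1$ and $G_2$, then gives
\begin{equation*}
\varphi(G) = \sum_{B \text{ block of } G} \varphi(B) = \sum_{B \text{ block of } G_1} \varphi(B) + \sum_{B \text{ block of } G_2} \varphi(B) = \varphi(G_1) + \varphi(G_2).
\end{equation*}
Disconnected $G_1$ or $G_2$ need no special treatment, since blocks are defined componentwise and the same reasoning applies.

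Alternatively, one can argue directly, which avoids block bookkeeping. Given optimal bipartitions of $G_1$ and $G_2$, swap the two classes in $G_2$ if necessary so that $v_1$ and $v_2$ lie on the same side, then glue. This yields $\varphi(G) \le \varphi(G_1) + \varphi(G_2)$. Conversely, restricting any bipartition of $G$ to the edge sets of $G_1$ and $G_2$, which partition $E(G)$, gives $\varphi(G) \ge \varphi(G_1) + \varphi(G_2)$.

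The only mildly delicate step is the identification of the blocks of $G$ with those of $G_1$ and $G_2$, including the isolated-vertex edge cases. Because of this, I would likely present the direct two-inequality argument as the main proof.
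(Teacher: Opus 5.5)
Your proposal is correct. Your first route is the paper's own: the paper simply observes that the identified vertex is a cut-vertex of $(G_1,v_1)\odot(G_2,v_2)$ and invokes Lemma~\ref{lem:blockAdditive}. Your block-identification argument supplies the details the paper leaves implicit.

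The two-inequality argument you intend to make the main proof is a genuinely different and more elementary route. It works directly with colourings. It uses only that $E(G_1)$ and $E(G_2)$ partition the edge set while the two vertex sets share a single vertex, and the colour swap on $G_2$ handles that shared vertex.

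That route buys you two things:
\begin{itemize}
\item \emph{Self-containment.} You need neither the block structure nor the odd-cycle reasoning behind Lemma~\ref{lem:blockAdditive}.
\item \emph{Uniformity.} Degenerate cases where the identified vertex is not actually a cut-vertex, such as $G_1\cong K_1$ or $v_1$ isolated in a disconnected $G_1$, need no separate remark. The paper's one-line assertion that the vertex is a cut-vertex tacitly excludes these cases.
\end{itemize}
Iterating your gluing argument over the block--cut-vertex tree would even reprove Lemma~\ref{lem:blockAdditive}.

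The paper's route, for its part, buys brevity by presenting the corollary as an immediate special case of general block additivity.
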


\noindent In~\cite{Sciriha2016}, Ali et al.\ state a graph-theoretic version of
Cauchy's Interlacing Theorem, which we use in the proof of
Lemma~\ref{lem:coreCoalescenceNullity}:

\begin{theorem}[{\cite[Theorem 1.2]{Sciriha2016}}]
\label{thm:etaVertexDeletion}
Let $G$ be a graph of order at least $2$ and let $v \in V(G)$. Then
\begin{equation}
\eta(G) - 1 \leq \eta(G - v) \leq \eta(G) + 1.
\end{equation}
\end{theorem}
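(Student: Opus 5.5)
The claim is the graph-theoretic form of Cauchy's interlacing theorem: $\eta(G) - 1 \leq \eta(G - v) \leq \eta(G) + 1$. The plan is to read it off from eigenvalue interlacing for a principal submatrix. Order the vertices so that $v$ comes last. Then
\begin{equation}
\mathbf{A}(G) = \begin{bmatrix} \mathbf{A}(G-v) & \mathbf{b} \\ \mathbf{b}^\intercal & 0 \end{bmatrix},
\end{equation}
where $\mathbf{b}$ is the indicator vector of $G(v)$. Thus $\mathbf{A}(G-v)$ is a principal $(n-1)\times(n-1)$ submatrix of the real symmetric matrix $\mathbf{A}(G)$. Let $\lambda_1 \geq \cdots \geq \lambda_n$ be the eigenvalues of $\mathbf{A}(G)$ and $\mu_1 \geq \cdots \geq \mu_{n-1}$ those of $\mathbf{A}(G-v)$. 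Cauchy's interlacing theorem gives $\lambda_i \geq \mu_i \geq \lambda_{i+1}$ for $1 \leq i \leq n-1$.

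Next I will turn interlacing into a statement about multiplicities of the eigenvalue $0$. Suppose $\eta(G) = k$, so that $\lambda_j = \lambda_{j+1} = \cdots = \lambda_{j+k-1} = 0$ for some index $j$.

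For the lower bound, take $i = j, \ldots, j+k-2$. Each such $\mu_i$ is squeezed by $\lambda_i \geq \mu_i \geq \lambda_{i+1}$ between two zeros, so $\mu_i = 0$. This yields at least $k-1$ zero eigenvalues of $\mathbf{A}(G-v)$, hence $\eta(G-v) \geq \eta(G) - 1$.

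For the upper bound, apply the same argument with the roles swapped. If $\mathbf{A}(G-v)$ has $0$ with multiplicity $\ell$, say $\mu_s = \cdots = \mu_{s+\ell-1} = 0$, then interlacing gives $\lambda_{i+1} \leq \mu_i = 0$ and $\lambda_{i} \geq \mu_{i} = 0$. Hence $\lambda_{s+1}, \ldots, \lambda_{s+\ell-1}$ are all $0$, which is $\ell - 1$ zeros. So $\eta(G) \geq \eta(G-v) - 1$.

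As a cross-check, there is an elementary rank argument. Deleting a row and the corresponding column changes the rank by at most $2$. Since $\eta(G) = n - \operatorname{rank}\mathbf{A}(G)$ and $\eta(G-v) = (n-1) - \operatorname{rank}\mathbf{A}(G-v)$, this gives $|\eta(G) - \eta(G-v)| \leq 1$ directly. This version even avoids symmetry.

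The only step needing care is the index bookkeeping at the ends of the spectrum. When the zero block sits at the extreme indices $1$ or $n$, I need the range of $i$ to stay within $1, \ldots, n-1$; it does, because only consecutive pairs inside the zero block are used. The hypothesis that $G$ has order at least $2$ simply ensures that $G - v$ is a genuine graph with a nonempty adjacency matrix.
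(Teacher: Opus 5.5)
Your proof is correct, and it follows the route the paper indicates: the paper does not prove this result itself but cites it from Ali et al.\ as a graph-theoretic form of Cauchy's interlacing theorem. Your index bookkeeping works, including the implicit use of the fact that, for the symmetric matrix $\mathbf{A}(G)$, the nullity equals the multiplicity of the eigenvalue $0$. Your rank remark is also a complete proof on its own, since deleting a row and a column lowers the rank by at most $2$ and never raises it; it is the more economical argument, as it needs neither symmetry nor spectral theory.
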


We will also use the following:

\begin{proposition}
\label{prop:deleteVertexFromCoreGraph}
Let $G$ be a core graph and let $v \in V(G)$. Then $\eta(G - v) = \eta(G) - 1$.
\end{proposition}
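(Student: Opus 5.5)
Since $G$ is a core graph, some $\x \in \ker G$ is full, so in particular $\x(v) \neq 0$. By Theorem~\ref{thm:etaVertexDeletion}, $\eta(G - v) \in \{\eta(G) - 1, \eta(G), \eta(G) + 1\}$. The plan is to prove directly that $\eta(G - v) \leq \eta(G) - 1$, which settles the claim. The tool is a map from $\ker(G - v)$ into $\ker G$ together with a linear functional that separates $\ker G$.

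First, let $K_v = \{\y \in \ker G \mid \y(v) = 0\}$. This is the kernel of the linear functional $\y \mapsto \y(v)$ restricted to $\ker G$. That functional is non-zero on $\ker G$ because $\x(v) \neq 0$. Hence $\dim K_v = \eta(G) - 1$.

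Second, take $\z \in \ker(G - v)$ and extend it by zero at $v$ to a vector $\hat{\z}$ on $V(G)$. The local conditions at every $u \neq v$ hold for $\hat{\z}$, because the only new neighbour term is $\hat{\z}(v) = 0$. The local condition at $v$ reads $\sum_{w \in G(v)} \z(w) = 0$, and this may fail. To handle it, I use the symmetry of $\mathbf{A}(G)$. Let $\mathbf{a}$ be the column of $\mathbf{A}(G)$ indexed by $v$, with the $v$-entry deleted; this is the neighbourhood indicator of $v$ inside $G - v$. The kernel condition for $\x$ at the vertices $u \neq v$ gives
\[
\mathbf{A}(G - v)\,\x|_{V(G) \setminus \{v\}} + \x(v)\,\mathbf{a} = \mathbf{0}.
\]
So $\mathbf{a}$ lies in the column space of the symmetric matrix $\mathbf{A}(G - v)$, which is the orthogonal complement of $\ker(G - v)$. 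Therefore $\mathbf{a}^\intercal \z = 0$ for every $\z \in \ker(G - v)$, which is exactly the missing local condition at $v$. This is the key step, and it uses only that $\x(v) \neq 0$.

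It follows that $\z \mapsto \hat{\z}$ is an injective linear map from $\ker(G - v)$ into $K_v$. Hence $\eta(G - v) \leq \dim K_v = \eta(G) - 1$. Theorem~\ref{thm:etaVertexDeletion} supplies the reverse inequality, giving $\eta(G - v) = \eta(G) - 1$.

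Alternatively, the same argument yields equality directly, since restriction is a linear map from $K_v$ back into $\ker(G - v)$ and is inverse to the extension. No step here is a real obstacle. The only subtle point is noticing that the local condition at $v$ follows from orthogonality, and this is where the full kernel vector is needed.
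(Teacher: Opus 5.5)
Your proof is correct, but it takes a different route from the paper: the paper gives no direct argument. It notes that every vertex of a core graph is a core vertex, and then cites Proposition~13 of \cite{Sciriha2008}, which says that deleting a core vertex lowers the nullity by exactly one.

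Your argument is in effect a self-contained proof of that cited result, because the only hypothesis you use is that some $\x \in \ker G$ has $\x(v) \neq 0$. The key observation is that the neighbourhood vector $\mathbf{a}$ of $v$ lies in the column space of the symmetric matrix $\mathbf{A}(G-v)$, and is therefore orthogonal to $\ker(G-v)$. That is exactly what lets every kernel vector of $G-v$ extend by zero to a kernel vector of $G$.

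The paper's route buys brevity, at the cost of depending on the literature. Yours buys self-containment and an explicit isomorphism $\ker(G-v) \cong \{\y \in \ker G \mid \y(v) = 0\}$, given by extension by zero and restriction. As you note, the restriction half makes the appeal to Theorem~\ref{thm:etaVertexDeletion} unnecessary.
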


\noindent As every vertex of a core graph is a core vertex, the above
proposition follows immediately from~\cite[Proposition 13]{Sciriha2008}.

\begin{lemma}
\label{lem:coreCoalescenceNullity}
Let $G_1, G_2$ be core graphs and let $v_1 \in V(G_1)$ and $v_2 \in V(G_2)$ be
arbitrary vertices. Then $(G_1, v_1) \odot (G_2, v_2)$ is a core graph and
$\eta((G_1, v_1) \odot (G_2, v_2)) = \eta(G_1) + \eta(G_2) - 1$.
\end{lemma}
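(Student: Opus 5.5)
Let $G \coloneqq (G_1, v_1) \odot (G_2, v_2)$ and let $v$ denote the vertex obtained by identifying $v_1$ and $v_2$. The plan is to compute $\eta(G)$ by comparing $G$ with $G - v$, and then to exhibit a full kernel vector directly. The key observation is that $G - v$ is the disjoint union $(G_1 - v_1) \cup (G_2 - v_2)$. By Proposition~\ref{prop:deleteVertexFromCoreGraph},
\[
\eta(G - v) = \eta(G_1 - v_1) + \eta(G_2 - v_2) = \eta(G_1) + \eta(G_2) - 2.
\]
Theorem~\ref{thm:etaVertexDeletion} applied to $G$ and $v$ then gives $\eta(G) \leq \eta(G-v) + 1 = \eta(G_1) + \eta(G_2) - 1$. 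The lower bound will come from a direct construction of kernel vectors.

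For the construction, take $\x_1 \in \ker G_1$ and $\x_2 \in \ker G_2$. Glue them into a vector $\x$ on $V(G)$ whenever $\x_1(v_1) = \x_2(v_2)$: set $\x$ equal to $\x_1$ on $V(G_1)\setminus\{v_1\}$, equal to $\x_2$ on $V(G_2)\setminus\{v_2\}$, and $\x(v) = \x_1(v_1)$. We then check the local conditions.
\begin{itemize}
\item At a vertex of $G_i$ other than the root, the neighbourhood and the values are unchanged, so the local condition holds.
\item At $v$, the neighbour sum is the sum of the neighbour sums of $v_1$ in $G_1$ and of $v_2$ in $G_2$. Both are zero, so this condition holds as well.
\end{itemize}
Hence $\x \in \ker G$. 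The set of pairs $(\x_1, \x_2) \in \ker G_1 \oplus \ker G_2$ with $\x_1(v_1) = \x_2(v_2)$ is the kernel of the linear functional $(\x_1,\x_2) \mapsto \x_1(v_1) - \x_2(v_2)$. This functional is nonzero because $v_1$ is a core vertex, so its kernel has dimension $\eta(G_1) + \eta(G_2) - 1$. The gluing map is injective, since $(\x_1, \x_2)$ is recovered from $\x$. Therefore $\eta(G) \geq \eta(G_1) + \eta(G_2) - 1$, and with the upper bound equality follows.

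For the core property, choose full vectors $\z_1 \in \ker G_1$ and $\z_2 \in \ker G_2$. Rescale $\z_2$ by the nonzero factor $\z_1(v_1)/\z_2(v_2)$ so that the root values agree; rescaling preserves fullness. The glued vector is then nonzero at every vertex, so it is a full vector in $\ker G$. Also $|V(G)| \geq 2$ trivially, since each $G_i$ has at least $2$ vertices, so $G$ is a core graph.

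The only delicate point is matching the two bounds on the nullity. Once the upper bound is obtained from the vertex-deletion decomposition together with interlacing, everything else is routine verification of local conditions, so I do not expect a real obstacle.
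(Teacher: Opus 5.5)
Your proof is correct and follows essentially the same route as the paper. The upper bound $\eta(G)\le\eta(G_1)+\eta(G_2)-1$ comes from Proposition~\ref{prop:deleteVertexFromCoreGraph} and Theorem~\ref{thm:etaVertexDeletion} applied to the disjoint union $G-v$, and the core property comes from gluing rescaled full kernel vectors. Your lower bound counts the kernel of the functional $(\x_1,\x_2)\mapsto\x_1(v_1)-\x_2(v_2)$ on $\ker G_1\oplus\ker G_2$. This is a coordinate-free version of the paper's argument, which chooses bases with $\x_i(v_1)=0$ and $\y_i(v_2)=0$ for $i\geq 2$ and then checks linear independence.
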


\begin{proof}
Let us denote $\eta_1 = \eta(G_1)$, $\eta_2 = \eta(G_2)$, and $H = (G_1, v_1)
\odot (G_2, v_2)$. Let $\{ \x_1, \x_2, \ldots, \x_{\eta_1} \}$ and $\{ \y_1,
\y_2, \ldots, \y_{\eta_2} \}$ be the sets of basis vectors for $\ker G_1$ and
$\ker G_2$, respectively. Since $G_1$ is a core graph, we can assume that
$\x_1(v) \neq 0$ for all $v \in V(G_1)$ and $\x_i(v_1) = 0$ for all $i \geq 2$.
Similarly, $\y_1(v) \neq 0$ for all $v \in V(G_2)$ and $\y_i(v_2) = 0$ for all
$i \geq 2$. Moreover, we may assume that $\x_1(v_1) = \y_1(v_2)$. Let us define
$\w \colon V(H) \to \mathbb{R}$, such that
\[
\w(v) = \begin{cases}
\x_1(v) & \text{if }  v \in V(G_1), \\
\y_1(v) & \text{if }  v \in V(G_2).
\end{cases}
\]
Furthermore, let $\widetilde{\x}_i \colon V(H) \to \mathbb{R}$ for $2 \leq i
\leq \eta_1$ and $\widetilde{\y}_i \colon V(H) \to \mathbb{R}$ for $2 \leq i
\leq \eta_2$ be defined as follows:
\begin{align*}
\widetilde{\x}_i(v) & = \begin{cases}
\x_i(v) & \text{if }  v \in V(G_1), \\
0 & \text{if }  v \in V(G_2);
\end{cases} \\
\widetilde{\y}_i(v) & = \begin{cases}
0 & \text{if }  v \in V(G_1), \\
\y_i(v) & \text{if }  v \in V(G_2).
\end{cases}
\end{align*}
It is easy to see that vectors $\w, \widetilde{\x}_2, \ldots,
\widetilde{\x}_{\eta_1}, \widetilde{\y}_2, \ldots, \widetilde{\y}_{\eta_2}$ all
reside in $\ker H$. Since $\w$ is a full vector, $H$ is a core graph.

To see that $\eta(H) = \eta(G_1) + \eta(G_2) - 1$, we will show that $\{ \w,
\widetilde{\x}_2, \ldots, \widetilde{\x}_{\eta_1}, \widetilde{\y}_2, \ldots,
\widetilde{\y}_{\eta_2} \}$ is a basis of $\ker H$. First, we show that vectors
$\w, \widetilde{\x}_2, \ldots, \widetilde{\x}_{\eta_1}, \widetilde{\y}_2,
\ldots, \widetilde{\y}_{\eta_2}$ are linearly independent. To obtain a
contradiction, suppose that there exist some multipliers $\mu_0, \mu_2^{(1)},
\ldots, \mu_{\eta_1}^{(1)}, \mu_2^{(2)}, \ldots, \mu_{\eta_2}^{(2)}$, not all
zero, so that
\[
\s \coloneqq \mu_0 \w + \mu_2^{(1)} \widetilde{\x}_2 + \cdots + 
\mu_{\eta_1}^{(1)} \widetilde{\x}_{\eta_1} + \mu_2^{(2)} \widetilde{\y}_2 + 
\cdots + \mu_{\eta_2}^{(2)} \widetilde{\y}_{\eta_2} = 0.
\]
Observe that $\s(v_1) = \mu_0 \w (v_1)$. As $\w (v_1) \neq 0$, it follows that
$\mu_0 = 0$. Without loss of generality, assume that a non-zero multiplier is
among $\mu_2^{(1)}, \ldots, \mu_{\eta_1}^{(1)}$. But then $\mu_2^{(1)} \x_2 +
\cdots + \mu_{\eta_1}^{(1)} \x_{\eta_1} = 0$, which contradicts the fact that
$\{ \x_1, \x_2, \ldots, \x_{\eta_1} \}$ is a basis for $\ker G_1$. In other
words, $\eta(H) \geq \eta(G_1) + \eta(G_2) - 1$. Now, we show that $\eta(H) \leq
\eta(G_1) + \eta(G_2) - 1$. By Proposition~\ref{prop:deleteVertexFromCoreGraph},
$\eta(G_1 - v_1) = \eta_1 - 1$ and $\eta(G_2 - v_2) = \eta_2 - 1$. By
Theorem~\ref{thm:etaVertexDeletion},
\begin{equation}
\eta(H - v_1) \geq \eta(H) - 1,
\label{eq:coalCoreEq1}
\end{equation} 
but the graph $H - v_1$ is a disjoint union of graphs $G_1 - v_1$ and $G_2 -
v_2$, and therefore
\begin{equation}
\eta(H - v_1) = \eta(G_1 - v_1) + \eta(G_2 - v_2) =  \eta_1 + \eta_2 - 2.
\label{eq:coalCoreEq2}
\end{equation} 
Together \eqref{eq:coalCoreEq1} and \eqref{eq:coalCoreEq2} yield the claim.
\end{proof}

\noindent Lemma~\ref{lem:coreCoalescenceNullity} has some overlap with
\cite[Proposition 18]{Sciriha2008}. The following generalises both:

\begin{lemma}
Let $G_1, G_2$ be singular graphs and let $v_1 \in V(G_1)$ and $v_2 \in V(G_2)$
be core vertices in $G_1$ and $G_2$, respectively. Then
$(G_1, v_1) \odot (G_2, v_2)$ is singular and \[ 
\eta((G_1, v_1) \odot (G_2, v_2)) = \eta(G_1) + \eta(G_2) - 1.
\]
\end{lemma}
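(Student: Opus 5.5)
The plan is to mimic the proof of Lemma~\ref{lem:coreCoalescenceNullity}, replacing full vectors by kernel vectors that are merely non-zero at the root. Write $\eta_1 = \eta(G_1)$, $\eta_2 = \eta(G_2)$ and $H = (G_1, v_1) \odot (G_2, v_2)$. Since $v_1$ is a core vertex of $G_1$, some $\x_1 \in \ker G_1$ has $\x_1(v_1) \neq 0$. Extend $\x_1$ to a basis $\{\x_1, \ldots, \x_{\eta_1}\}$ of $\ker G_1$, and subtract suitable multiples of $\x_1$ so that $\x_i(v_1) = 0$ for $i \geq 2$. Do the same for $G_2$ to get $\y_1, \ldots, \y_{\eta_2}$ with $\y_1(v_2) \neq 0$ and $\y_i(v_2) = 0$ for $i \geq 2$. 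Rescale so that $\x_1(v_1) = \y_1(v_2)$.

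For the lower bound $\eta(H) \geq \eta_1 + \eta_2 - 1$, define $\w$ by gluing $\x_1$ and $\y_1$, and let $\widetilde{\x}_i$, $\widetilde{\y}_i$ be the extensions by zero, exactly as before. We check the local conditions in $H$:
\begin{itemize}
\item at vertices other than the identified root, the neighbourhoods are unchanged, so the conditions hold;
\item at the root, the sum over $H(v_1)$ splits as the sum over $G_1(v_1)$ plus the sum over $G_2(v_2)$, and each part vanishes separately.
\end{itemize}
Note that this works for $\w$ regardless of the root value, and for the zero-extended vectors because they vanish at the root. Linear independence follows by the same argument as in Lemma~\ref{lem:coreCoalescenceNullity}: evaluating at the root kills $\mu_0$, and the remaining vectors have disjoint supports built from independent sets. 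This already shows that $H$ is singular, since $\w \neq \mathbf{0}$.

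For the upper bound, use Theorem~\ref{thm:etaVertexDeletion}: $\eta(H) \leq \eta(H - v_1) + 1$. Here $H - v_1 = (G_1 - v_1) \sqcup (G_2 - v_2)$, so it remains to show that $\eta(G_i - v_i) = \eta_i - 1$ whenever $v_i$ is a core vertex. This replaces Proposition~\ref{prop:deleteVertexFromCoreGraph}, which was stated only for core graphs. The paper attributes it to \cite[Proposition 13]{Sciriha2008}, whose hypothesis is exactly that the vertex is a core vertex, so I would cite that directly.

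If a self-contained argument is preferred, the inequality $\eta(G - v) \geq \eta(G) - 1$ is immediate: the vectors $\x_2, \ldots, \x_{\eta}$ restricted to $G - v$ lie in $\ker(G - v)$ and are independent. For the reverse inequality $\eta(G - v) \leq \eta(G) - 1$, use a rank argument. Write
\[
\mathbf{A}(G) = \begin{bmatrix} 0 & \mathbf{b}^\intercal \\ \mathbf{b} & \mathbf{A}' \end{bmatrix},
\]
where $\mathbf{A}' = \mathbf{A}(G - v)$.
\begin{itemize}
\item Let $\x = (a, \x')$ be a kernel vector with $a \neq 0$. The kernel equations give $a\mathbf{b} + \mathbf{A}'\x' = \mathbf{0}$, so $\mathbf{b} \in \operatorname{col}(\mathbf{A}')$.
\item By symmetry of $\mathbf{A}'$, this means $\mathbf{b} \perp \ker \mathbf{A}'$. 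Hence every $\z \in \ker \mathbf{A}'$ extends by $0$ at $v$ to a vector in $\ker G$ that vanishes at $v$.
\item These extended vectors, together with $\x$, are independent, which gives $\eta(G) \geq \eta(G - v) + 1$.
\end{itemize}

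Combining the two inequalities gives $\eta(H) \leq (\eta_1 - 1) + (\eta_2 - 1) + 1 = \eta_1 + \eta_2 - 1$, and equality follows from the lower bound. The main obstacle is this deletion step, but the rank argument above settles it.
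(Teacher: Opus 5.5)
Your proposal is correct and is exactly the modification of the proof of Lemma~\ref{lem:coreCoalescenceNullity} that the paper has in mind. A kernel vector that is merely non-zero at the root replaces the full vector in the lower-bound construction, and the upper bound comes from Theorem~\ref{thm:etaVertexDeletion} together with $\eta(G_i - v_i) = \eta(G_i) - 1$ for a core vertex $v_i$, i.e.\ \cite[Proposition~13]{Sciriha2008}, which is the result behind Proposition~\ref{prop:deleteVertexFromCoreGraph}; your self-contained rank argument for that deletion step is also valid and removes the dependence on the citation.
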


\noindent Proof follows by modification of proofs of either
Lemma~\ref{lem:coreCoalescenceNullity} or Proposition 18 in~\cite{Sciriha2008}.

\subsection{Nut and core graphs with prescribed bipartite edge frustration}

The next proposition shows that the bound in Corollary~\ref{thm:mainCorollary}
is best possible.

\begin{proposition}
\label{prop:allExist}
For every integer $k \geq 2$ there exists a nut graph $G$ such that $\varphi(G)
= k$.
\end{proposition}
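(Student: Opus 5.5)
The plan is to combine small seed nut graphs with the additivity of $\varphi$ under coalescence. Two seeds suffice, one with $\varphi = 2$ and one with $\varphi = 3$. By Table~\ref{tbl:tbl1} and Figure~\ref{fig:smallNuts}, the Sciriha graph $S_1$ (panel (a)) is a nut graph of order $7$ with $\varphi(S_1) = 2$. A nut graph $T$ with $\varphi(T) = 3$ exists on $9$ vertices (panels (d), (e)); alternatively one can take the Frucht graph of panel (f). For these seeds I would verify $\varphi$ by hand, as the caption indicates. The upper bound comes from exhibiting the dashed frustrated edges. The lower bound for $S_1$ is immediate from Corollary~\ref{thm:mainCorollary}. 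For $T$, one checks that no bipartition leaves only two frustrated edges, for example by finding three edge-disjoint odd cycles, or by a short case check. For the Frucht graph one could instead use Lemma~\ref{lem:hadlock} on its planar dual.

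Now let $k \geq 2$ and write $k = 2a + 3b$ with non-negative integers $a, b$. Take $b = 0$, $a = k/2$ if $k$ is even, and $b = 1$, $a = (k-3)/2$ if $k$ is odd. Form $G$ by iterated coalescence of $a$ copies of $S_1$ and $b$ copies of $T$, at arbitrarily chosen root vertices. The order of coalescence does not matter; at each step, coalesce the graph built so far with a new copy at any vertex.

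Two facts then finish the proof. First, by Sciriha's result quoted in the preliminaries, the coalescence of two nut graphs is a nut graph. Induction on the number of copies therefore shows that $G$ is a nut graph. Lemma~\ref{lem:coreCoalescenceNullity} gives the same conclusion, with nullity $1 + 1 - 1 = 1$. Second, by Corollary~\ref{lem:coalAdditive} applied at each step,
\[
\varphi(G) = a\,\varphi(S_1) + b\,\varphi(T) = 2a + 3b = k .
\]
Varying the roots, or coalescing in additional factors, should also yield infinitely many examples, although the statement does not require this.

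The only genuine obstacle is certifying $\varphi(T) = 3$ exactly for the seed with odd frustration. The even case needs only Corollary~\ref{thm:mainCorollary} and an explicit $2$-edge frustration set. For the odd seed, I would first try to exhibit three pairwise edge-disjoint odd cycles in $T$, which forces $\varphi(T) \geq 3$. If that fails, I would fall back on the planar-dual matching argument of Lemma~\ref{lem:hadlock} for the Frucht graph, pairing its odd-degree dual vertices (the six odd faces) at minimum total distance $3$.
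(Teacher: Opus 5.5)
Your proposal is correct and is essentially the paper's proof. The paper seeds with $S_1$ ($\varphi = 2$) and the Frucht graph ($\varphi = 3$), then passes from $k-2$ to $k$ by coalescing with $S_1$, using Sciriha's coalescence theorem and Corollary~\ref{lem:coalAdditive}; your decomposition $k = 2a + 3b$ with $b \in \{0,1\}$ simply unrolls that induction. For the seed certification you flagged, the Frucht graph has three pairwise vertex-disjoint triangles, which already force $\varphi \geq 3$, consistent with your Hadlock matching of its six odd faces.
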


\begin{proof}
The proof goes by induction. There exist nut graphs with bipartite edge
frustration $2$ and $3$, e.g., $S_1$ in Figure~\ref{fig:smallNuts}(a) and the
Frucht graph in Figure~\ref{fig:smallNuts}(f), respectively. A graph $G$ with
$\varphi(G) = k$ can be obtained from a graph $G'$ with $\varphi(G') = k - 2$ by
coalescence of $G'$ and $S_1$ (for arbitrary choice of root vertices) by using
Corollary~\ref{lem:coalAdditive}.
\end{proof}

Recall that if a core graph is bipartite then its nullity is at least $2$. The
next proposition shows that bipartite core graphs of arbitrary nullity
$\widetilde{\eta} \geq 2$ are easily obtained.

\begin{proposition}
Let $a, b \geq 2$. $K_{a,b}$ is a core graph and $\eta(K_{a, b}) = a + b - 2$.
\end{proposition}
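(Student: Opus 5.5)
The plan is to write down the adjacency matrix of $K_{a,b}$ explicitly, read off its rank, and then exhibit a full kernel vector by hand. Order the vertices so that the class of size $a$ comes first. Then
\[
\mathbf{A}(K_{a,b}) = \begin{bmatrix} \mathbf{0} & \mathbf{J}_{a \times b} \\ \mathbf{J}_{b \times a} & \mathbf{0} \end{bmatrix},
\]
where $\mathbf{J}$ denotes an all-ones matrix of the indicated size. Every row indexed by the first class equals $(\mathbf{0}, \mathbf{1}_b)$, and every row indexed by the second class equals $(\mathbf{1}_a, \mathbf{0})$. These two row vectors are linearly independent because $a, b \geq 1$. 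Hence the rank is $2$, and $\eta(K_{a,b}) = a + b - 2$.

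For the core property, I will write a vector as $\x$ on the first class and $\y$ on the second. By the local condition, $\x \oplus \y \in \ker K_{a,b}$ if and only if the entries of $\x$ sum to zero and the entries of $\y$ sum to zero. Indeed, each vertex of the second class sees exactly the sum of the entries of $\x$, and each vertex of the first class sees the sum of the entries of $\y$.

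It therefore suffices to find a full vector in $\mathbb{R}^a$ with zero sum, for every $a \geq 2$, and likewise for $b$. I will take
\[
\x = (1, 1, \ldots, 1, -(a-1)).
\]
All its entries are nonzero because $a \geq 2$, and its entries sum to zero. The analogous choice works for $\y$. Concatenating the two gives a full kernel eigenvector. Since $K_{a,b}$ has $a + b \geq 4 \geq 2$ vertices, it is a core graph.

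There is no real obstacle in this argument. The only point needing care is the role of the hypothesis $a, b \geq 2$. If $a = 1$, a zero-sum vector on one coordinate must vanish, so no full kernel vector exists. This is why the hypothesis is exactly what makes the full vector possible. It also matches the earlier remark that bipartite core graphs must have nullity at least $2$.
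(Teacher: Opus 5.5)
Your proof is correct and takes essentially the same approach as the paper. The paper uses the same full kernel vector (entry $1-a$ on one vertex of the first class, $1-b$ on one vertex of the second, and $1$ elsewhere). The only difference is that the paper reads off $\eta(K_{a,b}) = a+b-2$ from the known spectrum $\{\sqrt{ab}, 0^{a+b-2}, -\sqrt{ab}\}$, whereas you compute the rank of the adjacency matrix directly, which is equally valid and self-contained.
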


\begin{proof}
The spectrum of $K_{a,b}$ is $\{ \sqrt{ab}, 0^{a + b - 2}, -\sqrt{ab} \}$, see
e.g.~\cite[p.~8]{haemers}. Suppose that $V(K_{a,b}) = \{ v_1, \ldots, v_a, u_1,
\ldots, u_b \}$, where $v_1, \ldots, v_a$ are in the first part of the
bipartition and $u_1, \ldots, u_b$ are in the second part. It is easy to see
that $\x \colon V(K_{a,b}) \to \mathbb{R}$, defined by $\x(v_1) = 1 - a$,
$\x(u_1) = 1 - b$ and $\x(w) = 1$ if $w \notin \{v_1, u_1\}$, is a full vector
such that $\x \in \ker K_{a,b}$.
\end{proof}

\noindent This proposition has an immediate consequence.

\begin{corollary}
\label{cor:completeBipiEta}
For every $\ell \geq 2$, there exists a core graph $G$ with $\eta(G) = \ell$ and
$\varphi(G) = 0$.
\end{corollary}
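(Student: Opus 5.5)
The plan is to apply the preceding proposition directly, choosing the parameters $a,b$ of $K_{a,b}$ so that the nullity $a+b-2$ equals the prescribed value $\ell$. Fix $\ell \geq 2$. We need integers $a, b \geq 2$ with $a + b - 2 = \ell$. A concrete choice is $a = 2$ and $b = \ell$; this is valid because $\ell \geq 2$ gives $b \geq 2$. Then $G = K_{2,\ell}$ satisfies the hypotheses of the preceding proposition.

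That proposition gives two facts at once: $K_{2,\ell}$ is a core graph, and $\eta(K_{2,\ell}) = 2 + \ell - 2 = \ell$. It remains to show $\varphi(K_{2,\ell}) = 0$. Every complete bipartite graph is bipartite. Indeed, the bipartition $\{v_1, v_2\} \sqcup \{u_1, \ldots, u_\ell\}$ has no frustrated edges, so the minimum number of frustrated edges over all bipartitions is $0$.

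There is no genuine obstacle here; the statement is an immediate specialisation. The only point needing care is that the choice of $a$ and $b$ respects $a, b \geq 2$. This condition is exactly what excludes $\ell < 2$, which is consistent with the earlier remark that bipartite core graphs have nullity at least $2$. If one also wants infinitely many examples for each $\ell$, this construction does not supply them, since $a + b = \ell + 2$ allows only finitely many pairs; but the statement asks only for existence.
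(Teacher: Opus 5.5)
Your proposal is correct and matches the paper, which presents the corollary as an immediate consequence of the proposition on $K_{a,b}$: choose $a,b\geq 2$ with $a+b-2=\ell$ and note that a complete bipartite graph has $\varphi = 0$. Your explicit choice $a=2$, $b=\ell$ is exactly such a specialisation.
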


\begin{theorem}
For every $\ell \geq 2$ and every $k \geq 2$ or $k = 0$, there exists a core
graph $G$ with $\eta(G) = \ell$ and $\varphi(G) = k$.
\end{theorem}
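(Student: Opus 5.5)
The plan is to combine Corollary~\ref{cor:completeBipiEta}, Proposition~\ref{prop:allExist} and Lemma~\ref{lem:coreCoalescenceNullity} with the additivity of $\varphi$ under coalescence (Corollary~\ref{lem:coalAdditive}). The case $k = 0$ is exactly Corollary~\ref{cor:completeBipiEta}. So assume $k \geq 2$ and $\ell \geq 2$.

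By Proposition~\ref{prop:allExist}, there is a nut graph $N$ with $\varphi(N) = k$. Being a nut graph, $N$ is a core graph with $\eta(N) = 1$. Next take $H = K_{2,\ell-1}$. When $\ell \geq 3$ both parts have size at least $2$, so by the proposition on complete bipartite graphs $H$ is a core graph with $\eta(H) = 2 + (\ell - 1) - 2 = \ell - 1$ and $\varphi(H) = 0$.

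Now form $G = (N, u) \odot (H, w)$ for arbitrary root vertices $u \in V(N)$ and $w \in V(H)$. Lemma~\ref{lem:coreCoalescenceNullity} shows that $G$ is a core graph with $\eta(G) = \eta(N) + \eta(H) - 1 = 1 + (\ell - 1) - 1 = \ell - 1$. This is one short of the target, so the choice must be adjusted: take $H = K_{a,b}$ with $a + b - 2 = \ell$, for instance $H = K_{2,\ell}$. Then $\eta(H) = \ell$ for every $\ell \geq 2$, and the coalescence gives $\eta(G) = 1 + \ell - 1 = \ell$. Corollary~\ref{lem:coalAdditive} gives $\varphi(G) = \varphi(N) + \varphi(K_{2,\ell}) = k + 0 = k$. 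This choice also avoids the special case $\ell = 2$, since $K_{2,2}$ is a legitimate core graph of nullity $2$.

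There is no real obstacle. The only thing to get right is the nullity arithmetic in Lemma~\ref{lem:coreCoalescenceNullity}, namely the $-1$, which forces $\eta(H) = \ell$ rather than $\ell - 1$. One should also check that the hypotheses $a, b \geq 2$ of the $K_{a,b}$ proposition hold for $K_{2,\ell}$ whenever $\ell \geq 2$, which they do.
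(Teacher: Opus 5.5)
Your argument is correct and is essentially the paper's proof: coalesce a nut graph of frustration $k$ (Proposition~\ref{prop:allExist}) with a bipartite core graph of nullity $\ell$ (the paper cites Corollary~\ref{cor:completeBipiEta}, which is your $K_{2,\ell}$), then apply Lemma~\ref{lem:coreCoalescenceNullity} and Corollary~\ref{lem:coalAdditive}. In a final write-up, drop the abandoned $K_{2,\ell-1}$ attempt and state the $K_{2,\ell}$ choice directly.
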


\begin{proof}
Case $k = 0$ is covered by Corollary~\ref{cor:completeBipiEta}. By
Proposition~\ref{prop:allExist} there exists a nut graph $G_1$ with
$\varphi(G_1) = k$. By Corollary~\ref{cor:completeBipiEta} there exists a core
graph $G_2$ with $\eta(G_2) = \ell$ and $\varphi(G_2) = 0$. Let $v_1 \in V(G_1)$
and $v_2 \in V(G_2)$ be arbitrary vertices. By Corollary~\ref{lem:coalAdditive},
$\varphi((G_1, v_1) \odot (G_2, v_2)) = k$. By
Lemma~\ref{lem:coreCoalescenceNullity}, $(G_1, v_1) \odot (G_2, v_2)$ is a core
graph with $\eta((G_1, v_1) \odot (G_2, v_2)) = \ell$.
\end{proof}

\subsection{Chemical nut graphs with prescribed bipartite edge frustration}

Proposition~\ref{prop:allExist} asserted the existence of a nut graph $G$ with
$\varphi(G) = k$ for all $k \geq 2$. This holds even if we further require $G$
to be a chemical nut graph. To demonstrate this, we introduce an \emph{ad hoc}
construction which we call the \emph{prism splice} construction\footnote{Note
that the term \emph{splice} has sometimes been used~\cite{Doslic2005b,
Yarahmadi2010} for the operation on a pair of graphs that we are calling
\emph{coalescence}. Our definition of splice does not coincide with that used
in~\cite{Doslic2005b, Yarahmadi2010}.}. Let $G_1$ and $G_2$ be disjoint simple
graphs and let $u_1 v_1 \in E(G_1)$ and $u_2 v_2 \in E(G_2)$. The \emph{splice}
of $(G_1, u_1, v_1)$ and $(G_2, u_2, v_2)$ is the graph with vertex set $V(G_1)
\cup V(G_2)$ and edge set $(E(G_1) \cup E(G_2) \cup \{ u_1u_2, v_1v_2 \})
\setminus \{ u_1v_1, u_2v_2\}$. Here, we denote this graph $(G_1, u_1, v_1)
\bowtie (G_2, u_2, v_2)$. In the particular construction that we call the prism
splice, and denote by $\mathrm{PS}(G, e)$, we make the splice of $(G, u, v)$,
where $e = uv$, and $(Y_3, u', v')$ where $Y_3 \cong C_3 \cart K_2$ is the
$3$-prism graph, and $u' v'$ is an edge belonging to a $3$-cycle (see
Figure~\ref{fig:splice}). Note that graphs $(G_1, u_1, v_1) \bowtie (G_2, u_2,
v_2)$ and $(G_1, u_1, v_1) \bowtie (G_2, v_2, u_2)$ are not necessarily
isomorphic. In the case of the prism splice, given the symmetry of the prism, we
can swap vertices $u'$ and $v'$ in $(G, u, v) \bowtie (Y_3, u', v')$, and the
resulting graph is unique.

\begin{figure}[!ht]
\centering
\subcaptionbox{$G$}{\input{img/splice.tikz}}
\quad
\subcaptionbox{$\mathrm{PS}(G, uv)$}{\input{img/splice2.tikz}}
\quad
\subcaptionbox{$\mathrm{PS}(G, uv)$}{\input{img/splice3.tikz}}
\caption{The prism splice construction showing the parent graph $G$, the spliced
graph $\mathrm{PS}(G, uv)$, and kernel eigenvector $\y$ constructed in
Lemma~\ref{lem:batEigenvectors} (entries in blue).}
\label{fig:splice}
\end{figure}

\begin{lemma}
\label{lem:batEigenvectors}
Let $G$ be a graph with $uv \in E(G)$ and let the extra vertices of
$\mathrm{PS}(G, uv)$ be labelled $p_1, \ldots, p_6$ as shown in
Figure~\ref{fig:splice}(c). If\, $\y \in \ker \mathrm{PS}(G, uv)$ then
\[
\y(p_3) = \y(p_6) = \y(u), \qquad \y(p_1) = \y(p_4) = \y(v)
\qquad\text{and}\qquad \y(p_2) = \y(p_5) = - \y(u) - \y(v).
\]
\end{lemma}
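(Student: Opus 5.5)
The plan is a direct computation. The claim involves only the six new vertices, and each of them has all its neighbours among $p_1,\ldots,p_6,u,v$. So I will write down the local condition $\sum_{w \in G'(p_i)} \y(w) = 0$ at each $p_i$, where $G' = \mathrm{PS}(G,uv)$, and solve the resulting linear system. It has six equations in the six unknowns $\y(p_i)$, with $\y(u)$ and $\y(v)$ as parameters. No information about $G$ itself is needed.

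Since the figure is not reproduced here, I fix the structure abstractly. Let $x$ and $y$ be the prism vertices joined to $u$ and $v$ respectively by the two new edges; these are the ends of the deleted triangle edge $u'v'$. Let $z$ be the third vertex of that triangle. Let $a,b,c$ be the vertices of the other triangle, with rungs $xa$, $yb$, $zc$. In $G'$ the neighbourhoods are then
$x \sim u,z,a$; $y\sim v,z,b$; $z\sim x,y,c$; $a\sim x,b,c$; $b\sim y,a,c$; $c\sim z,a,b$.
Abbreviating $\y(w)$ as $w$, the six local conditions read
\[
u+z+a=0,\quad v+z+b=0,\quad x+y+c=0,\quad x+b+c=0,\quad y+a+c=0,\quad z+a+b=0 .
\]

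I will eliminate in the following order.
\begin{itemize}
\item The condition at $c$ gives $z=-a-b$.
\item Substituting into the conditions at $x$ and $y$ gives $b=u$ and $a=v$, so $z=-u-v$.
\item Subtracting the conditions at $a$ and $z$ gives $b=y$, so $y=u$.
\item Subtracting the conditions at $b$ and $z$ gives $a=x$, so $x=v$.
\item Finally, the condition at $z$ gives $c=-u-v$.
\end{itemize}
All six equations are then satisfied, and the solution is unique. The vertices carrying $\y(u)$ are $y$ and $b$. Those carrying $\y(v)$ are $x$ and $a$. Those carrying $-\y(u)-\y(v)$ are $z$ and $c$.

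The only real obstacle is matching these abstract names to the labels $p_1,\ldots,p_6$ of Figure~\ref{fig:splice}(c). The statement's pairings, $\{p_3,p_6\}$ for $\y(u)$, $\{p_1,p_4\}$ for $\y(v)$ and $\{p_2,p_5\}$ for the third value, indicate the intended identification up to relabelling. For example, take $p_3=y$, $p_6=b$, $p_1=x$, $p_4=a$, $p_2=z$, $p_5=c$; equivalently, each pair is a rung of the prism. A notable and somewhat counterintuitive feature is that the vertex adjacent to $u$ receives the value $\y(v)$, and vice versa.
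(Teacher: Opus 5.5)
Your proof is correct and takes the paper's approach: write the local conditions at the six new vertices, observe that they involve only $\y(u)$ and $\y(v)$ from $G$, and solve the resulting system, which forces exactly the stated values. One harmless detail: in the paper's labelling, the neighbour of $u$ is $p_4$ and the neighbour of $v$ is $p_6$ (cf.\ the bridges $up_4$ and $vp_6$ in Lemma~\ref{lem:goodBridgeLemma}), so your sample identification swaps the two labels within each rung, which does not affect the statement because it is symmetric within each pair.
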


\begin{proof}
Let us write $a = \y(u)$ and $b = \y(v)$. The local conditions at vertices $p_1,
\ldots, p_6$ are
\[
\begin{aligned}
a + \y(p_1) + \y(p_5) & = 0, & 
\y(p_2) + \y(p_3) + \y(p_4) & = 0, &
\y(p_1) + \y(p_3) + \y(p_5) & = 0,  \\
\y(p_2) + \y(p_4) + \y(p_6) & = 0,  &
\y(p_1) + \y(p_2) + \y(p_6) & = 0,  &
\y(p_3) + \y(p_5) + b & = 0. 
\end{aligned}
\]
The above equations imply $\y(p_3) = \y(p_6) = a$, $\y(p_1) = \y(p_4) = b$ and
$\y(p_2) = \y(p_5) = - a - b$.
\end{proof}

\begin{lemma}
\label{lem:batLemma}
Let $G$ be a nut graph, let $uv \in E(G)$, and let $\x \in \ker G$ with $\x \neq
0$. Then $\mathrm{PS}(G, uv)$ is a nut graph if and only if\, $\x(u) + \x(v)
\neq 0$.
\end{lemma}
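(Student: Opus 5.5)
The plan is to set up an explicit linear correspondence between $\ker \mathrm{PS}(G, uv)$ and $\ker G$, using Lemma~\ref{lem:batEigenvectors}. Write $H = \mathrm{PS}(G, uv)$. In $H$, $u$ is adjacent to $p_6$ instead of $v$, and $v$ is adjacent to $p_4$ instead of $u$; this is consistent with the local conditions at $p_1$ and $p_3$ in the proof of Lemma~\ref{lem:batEigenvectors}. First check this labelling against the figure.

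Let $\y \in \ker H$. By Lemma~\ref{lem:batEigenvectors}, $\y(p_6) = \y(u)$ and $\y(p_4) = \y(v)$. So the local condition of $H$ at $u$ reads $\sum_{w \in G(u)\setminus\{v\}} \y(w) + \y(u) = 0$, and the condition at $v$ reads $\sum_{w \in G(v)\setminus\{u\}} \y(w) + \y(v) = 0$. All other vertices of $G$ have the same neighbourhoods in $H$ as in $G$. Hence the restriction of $\y$ to $V(G)$ does \emph{not} directly lie in $\ker G$: the conditions at $u$ and $v$ involve $\y(u)$ and $\y(v)$ respectively, instead of $\y(v)$ and $\y(u)$.

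This mismatch suggests re-reading the labelling. The intended one is that $u$ gets neighbour $p_3$ and $v$ gets neighbour $p_1$; the equations $a + \y(p_1) + \y(p_5) = 0$ and $\y(p_3) + \y(p_5) + b = 0$ are those of $p_3$-like and $p_1$-like vertices. With $\y(p_3)=\y(u)$... I will fix the labelling so that the new neighbour of $u$ carries value $\y(v)$ and the new neighbour of $v$ carries value $\y(u)$, which is what the blue entries in Figure~\ref{fig:splice}(c) are meant to show. Then restriction gives a linear map $R\colon \ker H \to \ker G$. Conversely, any $\x \in \ker G$ extends to $H$ by the formulas of Lemma~\ref{lem:batEigenvectors}. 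The local conditions at $p_1, \ldots, p_6$ then hold by direct substitution, and those at vertices of $G$ hold by the above. Lemma~\ref{lem:batEigenvectors} shows that this extension is the only preimage, so $R$ is an isomorphism and $\eta(H) = \eta(G) = 1$.

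Therefore $\ker H$ is spanned by the extension $\widetilde{\x}$ of $\x$. Its entries are those of $\x$ (all non-zero, since $G$ is a nut graph), together with $\x(u)$, $\x(v)$ and $-\x(u)-\x(v)$. So $\widetilde{\x}$ is full, and $H$ is a nut graph, if and only if $\x(u)+\x(v) \neq 0$. If $\x(u)+\x(v)=0$, then $\ker H$ is still one-dimensional but its generator vanishes on $p_2$ and $p_5$, so $H$ is not a nut graph.

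The main obstacle is pinning down the adjacency between $\{u,v\}$ and $\{p_i\}$ consistently with Lemma~\ref{lem:batEigenvectors}. The second step of the argument, that the modified local conditions at $u$ and $v$ coincide exactly with those of $G$, depends entirely on getting this right. The remaining verifications are routine substitutions.
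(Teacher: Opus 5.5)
Your overall strategy is right and is essentially the paper's. You extend $\x$ by the values of Lemma~\ref{lem:batEigenvectors} and restrict kernel vectors of $\mathrm{PS}(G,uv)$ to $V(G)$. Packaging restriction and extension as mutually inverse linear maps gives $\eta(\mathrm{PS}(G,uv))=\eta(G)$ for any graph $G$. This is slightly cleaner than the paper's route, which combines a second kernel vector with $\widetilde{\x}$ so that it vanishes at $u$ (this needs $\x(u)\neq 0$) and then contradicts nutness. The fullness criterion $\x(u)+\x(v)\neq 0$ then follows exactly as you say.

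The gap is the step you flagged yourself, and it cannot be settled by ``fixing the labelling''. The labels $p_1,\dots,p_6$ are already determined by the six equations in Lemma~\ref{lem:batEigenvectors}. Whether the gadget neighbour of $u$ carries $\y(v)$ rather than $\y(u)$ is a property of the prism that has to be checked. If it failed, the restriction would not lie in $\ker G$ and your argument would collapse. Both of your intermediate readings are in fact wrong: $u\sim p_6$ or $u\sim p_3$ would each give the new neighbour of $u$ the value $\y(u)$.

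The correct adjacency is $u\sim p_4$ and $v\sim p_6$; these are the bridges used later in the proof of Lemma~\ref{lem:goodBridgeLemma}. The equation $a+\y(p_1)+\y(p_5)=0$ is the local condition at $p_4$, and $\y(p_3)+\y(p_5)+b=0$ is the one at $p_6$. To see this, note that $p_4$ and $p_6$ are the only labels occurring in just two of the six equations, so they are the attachment vertices, and only this assignment is consistent with the remaining equations. Lemma~\ref{lem:batEigenvectors} then gives $\y(p_4)=\y(v)$ and $\y(p_6)=\y(u)$, which is exactly the crossing you need.

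The same fact can be stated without labels. Let $u'\sim u$ and $v'\sim v$ be the ends of the deleted triangle edge, let $c$ be the third vertex of that triangle, and let $u'',v'',c''$ be their rung partners. The gadget equations then force $\y(u')=\y(u'')=\y(v)$, $\y(v')=\y(v'')=\y(u)$ and $\y(c)=\y(c'')=-\y(u)-\y(v)$. With this crossing verified, your isomorphism argument is complete.
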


\begin{proof}
Define $\widetilde{\x} \colon V(\mathrm{PS}(G, uv)) \to \mathbb{R}$ by
\begin{equation}
\widetilde{\x}(w) = \begin{cases}
\x(w), & \text{if } w \in V(G); \\
\x(u), & \text{if } w \in \{ p_3, p_6 \}; \\
\x(v), & \text{if } w \in \{ p_1, p_4 \}; \\
 - \x(u) - \x(v), & \text{if } w \in \{ p_2, p_5 \};
\end{cases}
\end{equation}
where the extra vertices of $\mathrm{PS}(G, uv)$ are labelled $p_1, \ldots, p_6$
as in Lemma~\ref{lem:batEigenvectors}. It is straightforward to check that
$\widetilde{\x} \in \ker \mathrm{PS}(G, uv)$. It remains to show that
$\eta(\mathrm{PS}(G, uv)) \leq 1$.

To obtain a contradiction, suppose that there exists another eigenvector
$\widetilde{\y} \in \ker \mathrm{PS}(G, uv)$ that is linearly independent of
$\widetilde{\x}$. Consider the vector $\widetilde{\z} = \alpha \widetilde{\x} -
\widetilde{\y}$, where $\alpha = \widetilde{\y}(u)/\widetilde{\x}(u)$. Note that
$\widetilde{\z} \in \ker \mathrm{PS}(G, uv)$, $\widetilde{\z}$ is non-trivial,
and $\widetilde{\z}(u) = 0$. By Lemma~\ref{lem:batEigenvectors},
$\widetilde{\z}(u) = \widetilde{\z}(p_6)$ and $\widetilde{\z}(v) =
\widetilde{\z}(p_4)$. The restriction of $\widetilde{\z}$ to graph $G$ is a
kernel eigenvector of $G$ with zero entry on $u$, which contradicts the fact
that $G$ is a nut graph. Hence, $\widetilde{\y}$ does not exist and
$\eta(\mathrm{PS}(G, uv)) \leq 1$.

The graph $\mathrm{PS}(G, uv)$ is a nut graph if $\widetilde{\x}$ is full, and
$\widetilde{\x}$ is full if and only if $- \x(u) - \x(v) \neq 0$.
\end{proof}

\begin{lemma}
\label{lem:goodBridgeLemma}
Let $G$ be a nut graph, let $\x \in \ker G$, and let $uv \in E(G)$ be a
 \emph{good} bridge, i.e., a bridge such that $\x(u) \neq -\x(v)$. Then
 $\mathrm{PS}(G, uv)$ is a nut graph with $\varphi(\mathrm{PS}(G, uv)) = \varphi
 (G) + 1$. Moreover, $\mathrm{PS}(G, uv)$ contains at least two good bridges.
\end{lemma}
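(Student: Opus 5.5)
The plan is to establish the three claims separately: the nut property from Lemma~\ref{lem:batLemma}, the frustration count from Lemma~\ref{lem:blockAdditive}, and the good bridges from Lemma~\ref{lem:batEigenvectors}.

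\textbf{Nut property.} Since $\x(u) \neq -\x(v)$, i.e.\ $\x(u) + \x(v) \neq 0$, Lemma~\ref{lem:batLemma} gives directly that $\mathrm{PS}(G, uv)$ is a nut graph. Its kernel is spanned by the vector $\widetilde{\x}$ constructed in that proof.

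\textbf{Frustration.} I will determine the block structure of $\mathrm{PS}(G, uv)$ and apply Lemma~\ref{lem:blockAdditive}. Since $uv$ is a bridge, $G - uv$ has two components, $G_u \ni u$ and $G_v \ni v$, and $G - uv$ contains no $u$--$v$ path.
\begin{itemize}
\item In $\mathrm{PS}(G, uv)$, let $u'$ and $v'$ be the prism vertices now joined to $u$ and $v$. Any cycle through the edge $uu'$ would have to return from the prism to $G_u$. The only way back is through $vv'$ and then a $u$--$v$ path in $G - uv$, which does not exist. Hence $uu'$ and $vv'$ are bridges.
\item The remaining blocks are the blocks of $G$ other than $uv$, together with the single block $Y_3 - u'v'$. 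This block is $2$-connected, since deleting one edge from the $3$-connected prism leaves it $2$-connected.
\item Next I check $\varphi(Y_3 - u'v') = 1$. The graph still contains the other triangle, so $\varphi \geq 1$. For the upper bound, label the prism triangles $a_1a_2a_3$ and $b_1b_2b_3$ with rungs $a_ib_i$. Removing $a_1a_2$ and $b_1b_2$ leaves a bipartite graph: the $6$-cycle $a_1a_3a_2b_2b_3b_1$ plus the chord $a_3b_3$, which joins vertices at odd distance along the cycle. Taking $u'v' = a_1a_2$, deleting $b_1b_2$ alone therefore suffices.
\item Bridges have frustration $0$. Lemma~\ref{lem:blockAdditive} then yields $\varphi(\mathrm{PS}(G, uv)) = \varphi(G) - 0 + 0 + 0 + 1 = \varphi(G) + 1$.
\end{itemize}

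\textbf{Good bridges.} I will show that both new bridges $uu'$ and $vv'$ are good with respect to $\widetilde{\x}$. By Lemma~\ref{lem:batEigenvectors}, the value $\widetilde{\x}(u')$ is one of $\x(u)$, $\x(v)$ or $-\x(u)-\x(v)$. In each case $\widetilde{\x}(u) + \widetilde{\x}(u')$ is nonzero:
\begin{itemize}
\item if $\widetilde{\x}(u') = \x(u)$, the sum is $2\x(u) \neq 0$, because $\x$ is full;
\item if $\widetilde{\x}(u') = \x(v)$, the sum is $\x(u) + \x(v) \neq 0$, by hypothesis;
\item if $\widetilde{\x}(u') = -\x(u)-\x(v)$, the sum is $-\x(v) \neq 0$, because $\x$ is full.
\end{itemize}
The same argument applies to $vv'$. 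Since the kernel is one-dimensional, goodness does not depend on the choice of kernel vector.

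The main point needing care is the block decomposition, in particular confirming that the spliced edges are bridges and that $Y_3 - u'v'$ is a single block. The case analysis does not even require identifying which $p_i$ is $u'$.
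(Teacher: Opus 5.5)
Your proof is correct and follows the paper's route: the nut property comes from Lemma~\ref{lem:batLemma}, and the frustration comes from additivity over blocks (the paper phrases this as coalescing $G_1$, $G_2$, two copies of $K_2$ and the six-vertex prism remnant $P$, with $\varphi(P)=1$). Your three-case check that the new bridges are good is valid but more than needed: comparing the local condition at $u$ in $G$ and in $\mathrm{PS}(G,uv)$ forces $\widetilde{\x}(u') = \x(v)$, so only your middle case occurs, which is why the paper can simply assert that $up_4$ and $vp_6$ are good.
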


\begin{proof}
Let the extra vertices of $\mathrm{PS}(G, uv)$ be labelled $p_1, \ldots, p_6$
(as in Lemma~\ref{lem:batEigenvectors}). By Lemma~\ref{lem:batLemma},
$\mathrm{PS}(G, uv)$ is a nut graph. Moreover, edges $up_4$ and $vp_6$ are good
bridges. Since $uv$ is a bridge in $G$, the graph $G$ can be obtained by
coalescing $G_1$, $G_2$ and $K_2$ (i.e.\ the bridge $uv$), where $G_1$ is the
subgraph of $G$ induced on the vertices that are closer to $u$ than to $v$ and
the graph $G_2$ is the subgraph of $G$ induced on the vertices that are closer
to $v$ than to $u$; see Figure 7(a). By Corollary~\ref{lem:coalAdditive},
$\varphi(G) = \varphi(G_1) + \varphi(G_2)$. On the other hand, graph
$\mathrm{PS}(G, uv)$ can be obtained by coalescing $G_1$, $G_2$, two copies of
$K_2$ (bridges $up_4$ and $vp_6$) and the graph $P$, where $P$ denotes the
subgraph of $\mathrm{PS}(G, uv)$ induced on the vertices $p_1, \ldots, p_6$; see
Figure~7(b). By Corollary~\ref{lem:coalAdditive}, $\varphi(\mathrm{PS}(G, uv)) =
\varphi(G_1) + \varphi(G_2) + \varphi(P)$. It is easy to check that $\varphi(P)
= 1$, and the result follows.
\end{proof}

\begin{figure}[!htp]
\centering
\subcaptionbox{}{
\input{img/prism_splice_panela.tikz}
}
\qquad
\subcaptionbox{}{
\input{img/prism_splice_panelb.tikz}
}
\caption{A graph $G$ with a bridge $uv$ and the prism splice $\mathrm{PS}(G,
uv)$.}
\end{figure}

\noindent Hence we have a theorem for bipartite edge frustration of chemical
graphs:

\begin{theorem}
\label{prop:chemicalRealm}
For every integer $k \geq 2$ there exists a chemical nut graph $G$, such that
$\varphi(G) = k$.
\end{theorem}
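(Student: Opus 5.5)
The plan is to start from a chemical nut graph with $\varphi = 2$ that has a good bridge, and then apply the prism splice repeatedly along good bridges. Lemma~\ref{lem:goodBridgeLemma} shows that each application raises $\varphi$ by exactly one, keeps the graph a nut graph, and produces at least two new good bridges, so the iteration never stalls.

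We also have to check that the maximum degree stays at most $3$. In $\mathrm{PS}(G,uv)$ the vertex $u$ loses its neighbour $v$ and gains one neighbour from the prism, and likewise for $v$. Hence the degrees of $u$ and $v$ are unchanged. Every prism vertex $p_i$ has degree $3$, because each $p_i$ has degree $3$ in $Y_3$ and the splice replaces one prism edge incident to each of $u'$ and $v'$ by an edge to $G$. Connectivity is preserved: any path through $uv$ can be rerouted through the prism, whose remaining part $Y_3 - u'v'$ is connected. So if $G$ is chemical, $\mathrm{PS}(G,uv)$ is chemical.

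The base case is the main obstacle: we need a chemical nut graph $G_0$ with $\varphi(G_0)=2$ and a bridge $uv$ with $\x(u)\neq -\x(v)$. I would build it by coalescence. Take two copies of a chemical nut graph with $\varphi \ge 1$ and a vertex of degree at most $2$; such vertices exist in small examples, e.g.\ the $9$-vertex chemical nut graph with $\varphi=2$ in Figure~\ref{fig:smallChemicalNuts}(a), which is not cubic since $n=9$ is odd. The aim is a nut graph containing a bridge. A cleaner route is to note that in the chemical nut graph $H$ with $\varphi(H)=2$ on $9$ vertices, a bridge exists or can be verified directly from the figure, and to check the kernel eigenvector values at its ends. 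If $H$ has no bridge, I would instead coalesce $H$ (at a degree-$1$ or degree-$2$ vertex, keeping degree $\le 3$) with a suitable small nut structure, so that the coalescence vertex is a cut-vertex. Since a pendant-free chemical coalescence needs the root degrees to sum to at most $3$, I would use a root of degree $1$ in one factor, which makes its edge a bridge in the result.

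For the good-bridge condition, note that in the coalescence the kernel vector restricts to scalar multiples of the factor eigenvectors, and these can be rescaled independently up to the shared root value. If the chosen bridge happened to satisfy $\x(u)=-\x(v)$, I would pick a different bridge or a different root vertex; the finite check is done on the explicit example.

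With a base graph $G_0$ having $\varphi(G_0)=2$ and a good bridge, induction gives $G_j$ with $\varphi(G_j)=2+j$ for all $j\ge0$, via Lemma~\ref{lem:goodBridgeLemma}. This covers every $k\ge2$, and since the construction can be started again at any stage, it even gives infinitely many such graphs for each $k$. If no $\varphi=2$ base with a good bridge is readily available, an alternative is a base with $\varphi = 3$, say, together with a separate $\varphi=2$ witness, namely Figure~\ref{fig:smallChemicalNuts}(a). That witness needs no good bridge itself, since only graphs with $\varphi\ge3$ are produced by iteration.
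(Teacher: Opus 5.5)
Your main line is the paper's proof: take the smallest chemical nut graph ($9$ vertices, $\varphi=2$), pick a good bridge in it, and iterate the prism splice using Lemma~\ref{lem:goodBridgeLemma}. Your check that $\mathrm{PS}(G,uv)$ keeps every degree at most $3$ and stays connected is correct, and the paper leaves it implicit.

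What you leave open is the base case, and the fallbacks you sketch for it would fail.
\begin{itemize}
\item Two nut graphs can never be coalesced into a chemical graph. Nut graphs are connected and have no pendant vertex, since the local condition at a pendant vertex forces a zero entry on its neighbour. So both roots have degree at least $2$ and the merged vertex has degree at least $4$. A degree-$1$ root is therefore not available.
\item Attaching a non-nut piece is not covered by Sciriha's coalescence result.
\item Any coalescence of two nut graphs has $\varphi\ge 2+2=4$ by Corollaries~\ref{lem:coalAdditive} and~\ref{thm:mainCorollary}, so it could not give a $\varphi=2$ seed anyway.
\item Rescaling cannot repair a bad bridge, because the condition $\x(u)+\x(v)\neq 0$ is invariant under scaling.
\end{itemize}

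None of this is needed, because the direct check succeeds. The unique chemical nut graph on $9$ vertices (Table~\ref{tbl:2}) is a triangle $u\,a\,b$ and a $5$-cycle $c\,d\,f\,g\,e$ joined by the path $u\,v\,c$. Set $\x(u)=1$ and propagate the local conditions. The conditions at $a,b,u,v,d,e,f,g$ force $\x(a)=\x(b)=-1$, $\x(v)=2$, $\x(c)=\x(d)=\x(e)=-1$ and $\x(f)=\x(g)=1$. The condition at $c$ is then satisfied. Hence the nullity is $1$ and the kernel vector is full.

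From these values, $\x(u)+\x(v)=3\neq 0$ and $\x(v)+\x(c)=1\neq 0$, so both edges of the path are good bridges. Also $\varphi=1+1=2$ by Lemma~\ref{lem:blockAdditive}. With this seed your induction goes through as written.

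Your closing remark about infinitely many graphs for each $k$ is not justified by the splice iteration. To get it, use the bridge construction of Theorem~\ref{thm:GutmanTheorem}: it preserves $\varphi$ (Proposition~\ref{prop:bridgeSubdivPreserve}) and keeps all degrees at most $3$.
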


\begin{proof}
Let $G_0$ be the smallest chemical nut graph
(Figure~\ref{fig:frustratedChemicalsBridged}(a)). $G_0$ contains a good bridge
$uv$ and we note that $\varphi(G_0) = 2$. By Lemma~\ref{lem:goodBridgeLemma},
the graph $G_1 \coloneqq \mathrm{PS}(G_0, uv)$ is a nut graph with $\varphi(G_1)
= \varphi(G_0) + 1$. Since $G_1$ also contains a good bridge, applying
Lemma~\ref{lem:goodBridgeLemma} iteratively yields a sequence of chemical nut
graphs $G_0, G_1, G_2, G_3, \ldots$ such that $\varphi(G_i) = i + 2$.
\end{proof}

\begin{figure}[!htbp]
\centering
\subcaptionbox{$G_0$}{
\input{img/theorem21_panela.tikz}
}
\subcaptionbox{$G_1$}{
\input{img/theorem21_panelb.tikz}
}
\subcaptionbox{$G_3$}{
\input{img/theorem21_panelc.tikz}
}
\caption{Graphs $G_0$, $G_1$ and $G_3$ from the proof of
Theorem~\ref{prop:chemicalRealm}. The red edge indicates a good bridge.}
\label{fig:frustratedChemicalsBridged}
\end{figure}

Note that, in the proof of Theorem~\ref{prop:chemicalRealm} above, applying the
prism splice construction to a `seed' graph increases the bipartite edge
frustration by $1$ and the order by $6$. As $G_0$ is of odd order, all graphs
$G_0, G_1, G_2, \ldots$ are of odd order. At any stage, the bridge
construction~\cite{Gutman1998} can be used to further increase order by $2$,
leaving bipartite edge frustration unchanged. We may also produce examples of
even order for all $k \geq 2$ by choosing a `seed' graph $G_0'$ such as the
graph in Figure~\ref{fig:alternativeSeeds}(a). Observe that the seed graph
$G_0''$ from Figure~\ref{fig:alternativeSeeds}(b) yields a sequence of cubic
graphs, and hence the following corollary.

\begin{corollary}
\label{thm:cubicAnyFrus}
For every integer $k \geq 2$ there exists a cubic nut graph $G$, such that
$\varphi(G) = k$.
\end{corollary}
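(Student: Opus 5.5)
The plan is to run the argument of Theorem~\ref{prop:chemicalRealm} again, starting from the cubic seed $G_0''$ of Figure~\ref{fig:alternativeSeeds}(b) instead of $G_0$. The key point is that the prism splice preserves cubicity. In $\mathrm{PS}(G,uv)$ the vertices $u$ and $v$ lose the edge $uv$ but gain the edges $up_4$ and $vp_6$, so their degrees are unchanged. The six prism vertices $p_1,\ldots,p_6$ have degree $3$ in $Y_3$. The prism vertices $u',v'$, renamed $p_4, p_6$, lose the edge $u'v'$ and each gain one edge to $G$, so they still have degree $3$. Hence if $G$ is cubic, so is $\mathrm{PS}(G,uv)$. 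Connectivity is also preserved: we remove one edge of $G$ and one edge of $Y_3$, and add two edges joining the two parts.

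First I check the seed. $G_0''$ must be a cubic nut graph with $\varphi(G_0'')=2$ that contains a good bridge $uv$, meaning $\x(u)\neq-\x(v)$ for the kernel vector $\x$. This is a finite verification on the pictured graph: compute the kernel vector, confirm it is full and of nullity one, and compute $\varphi$. The value of $\varphi$ follows from Lemma~\ref{lem:blockAdditive} applied to the blocks, with Theorem~\ref{thm:mainResult} giving the lower bound $2$.

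Next comes the induction. Set $G_{i+1}=\mathrm{PS}(G_i,e_i)$, where $e_i$ is a good bridge of $G_i$. Lemma~\ref{lem:goodBridgeLemma} says $G_{i+1}$ is a nut graph with $\varphi(G_{i+1})=\varphi(G_i)+1$, and that it again contains a good bridge (for example $up_4$), so the iteration never stops. By the previous paragraph each $G_i$ is cubic. Therefore $\varphi(G_i)=i+2$, and every $k\ge 2$ is attained by the cubic nut graph $G_{k-2}$.

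The only non-formal step, and the one I expect to need the most care, is the existence of a suitable cubic seed. A cubic graph with a bridge has to be assembled from blocks that each contain a vertex of degree $2$, and the kernel vector must still be full and nonzero-sum across the bridge. If the pictured $G_0''$ were unavailable, I would build one by hand. I would take a cubic nut graph and look for a construction that introduces a bridge while keeping the nut property. Alternatively I would search small cubic graphs with a bridge by computer for nut graphs with $\varphi=2$; larger seeds with $\varphi=k_0>2$ would also suffice for all $k\geq k_0$, but then the smaller values of $k$ would need separate witnesses.
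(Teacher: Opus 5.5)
Your proposal is correct and follows the paper's argument. The corollary is obtained by rerunning the proof of Theorem~\ref{prop:chemicalRealm} from the cubic seed $G_0''$ and iterating Lemma~\ref{lem:goodBridgeLemma}, and your explicit degree count showing that $\mathrm{PS}(G,uv)$ is cubic whenever $G$ is spells out what the paper leaves as the observation that $G_0''$ yields a sequence of cubic graphs.
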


\begin{figure}[!htp]
\centering
\subcaptionbox{$G_0'$}{
\input{img/fig9_panela.tikz}
}
\qquad
\subcaptionbox{$G_0''$}{
\input{img/fig9_panelb.tikz}
}
\caption{A seed graph $G_0'$ of even order and a cubic seed graph $G_0''$.}
\label{fig:alternativeSeeds}
\end{figure}

\subsection{Regular nut graphs with prescribed bipartite edge frustration}
\label{sec:5dot4}

Now we turn to regular nut graphs. We have seen that in the regular case, the
admissible values of frustration may be limited by the parity of the degree and
order (see Proposition~\ref{prop:explainMissingFrust}). The cubic case is
settled by Corollary~\ref{thm:cubicAnyFrus}. The next theorem shows that in the
quartic case, all admissible frustrations may be obtained.

\begin{theorem}
\label{thm:quarticAnyFrus}
For every even $k \geq 2$ there exists a quartic nut graph $G$ with $\varphi(G)
= k$. Moreover, there exists no quartic nut graph $G$ with odd $\varphi(G)$.
\end{theorem}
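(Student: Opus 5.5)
The non-existence half follows at once from Proposition~\ref{prop:explainMissingFrust}(a). A quartic graph is $r$-regular with $r = 4 \equiv 0 \pmod 4$, so its bipartite edge frustration is even. In particular no quartic nut graph can have odd $\varphi(G)$.

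For existence, I would mirror the proof of Proposition~\ref{prop:allExist}: build a family by induction on $k$ in steps of $2$, starting from a quartic seed. The base case needs a quartic nut graph $H_0$ with $\varphi(H_0)=2$. I would take it from the census in the \hyperref[appendix:a]{Appendix} of small quartic nut graphs, or from a natural vertex-transitive family, and verify $\varphi(H_0)=2$ directly. Theorem~\ref{thm:mainResult} already gives the lower bound $\varphi \geq 2$, so only an explicit bipartition with two frustrated edges is required.

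Coalescence cannot be used for the inductive step, because it destroys regularity. Instead I would use a degree-preserving gluing analogous to the splice. Take two quartic nut graphs $G_1, G_2$ and edges $u_1v_1 \in E(G_1)$, $u_2v_2 \in E(G_2)$. Form the splice $(G_1,u_1,v_1)\bowtie(G_2,u_2,v_2)$, removing two edges and adding two edges so that all degrees stay $4$. Alternatively, insert a fixed quartic gadget across an edge, exactly as the prism splice inserts $Y_3$ in the cubic case. The natural plan is a quartic analogue of Lemmas~\ref{lem:batEigenvectors}--\ref{lem:goodBridgeLemma}. First, find a small gadget $Q$ with two pendant attachment sites whose local conditions force the kernel entries on $Q$ to be determined by the entries at the attachment vertices. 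Second, show that the insertion preserves nullity one and fullness under a ``good'' condition such as $\x(u)\neq \pm\x(v)$. Third, arrange that the gadget joins the rest of the graph through a $2$-edge cut, so that $\varphi$ changes by a computable amount. A $2$-edge cut is not a cut vertex, so Lemma~\ref{lem:blockAdditive} does not apply verbatim. However, a $2$-edge cut whose removal leaves the pieces with independent bipartitions can alter the count by at most the parity of the cycles through the cut. One can therefore hope for $\varphi(G')=\varphi(G)+\varphi(Q^\circ)$ or that value plus $1$, and then choose $Q$ so the increment is exactly $2$. Finally, I would check that the new graph again contains a good edge of the required kind, so that the induction continues.

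The main obstacle is this last step: establishing exact additivity of $\varphi$ across a $2$-edge cut, and checking that the gadget preserves the nut property. If no clean gadget presents itself, my fallback is a direct infinite family: quartic circulants or Cayley graphs that are known to be nuts, parametrised so that $\varphi$ can be computed exactly. Planar examples, such as antiprism-like graphs where Lemma~\ref{lem:hadlock} applies, look especially promising here; the later use of Lemma~\ref{lem:antiprismFrustration} suggests that antiprisms (which are quartic and planar) play exactly this role. For such a family, Hadlock's pairing of odd faces in the dual yields $\varphi$ explicitly, and one needs to show it takes every even value.
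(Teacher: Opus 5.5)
Your non-existence argument is the paper's (Proposition~\ref{prop:explainMissingFrust}(a)), and taking a census example for $k=2$ is fine. The existence half for $k\ge 4$, however, is not established.

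Your primary route names no gadget and does not show that the splice preserves nullity one and fullness. It also cannot control the change in $\varphi$. As you concede, $\varphi$ is not additive across a $2$-edge cut: whether both cut edges can be made non-frustrated depends on whether optimal colourings of the two sides give the cut endpoints equal or unequal colours. You also dismiss coalescence too quickly. It destroys regularity only when you glue quartic graphs. Coalescing two nut graphs at vertices of degree $2$ produces a vertex of degree $4$. So chaining nut graphs in which every vertex has degree $4$, except one or two of degree $2$, yields quartic graphs. These are nut graphs by Lemma~\ref{lem:coreCoalescenceNullity}. Their frustration is exactly additive by Corollary~\ref{lem:coalAdditive}, because the glue points are cut vertices. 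This is the paper's alternative proof: a terminal block with $\varphi=5$, a link block with $\varphi=2$, and the census for $k\le 8$. It removes your $2$-edge-cut obstacle entirely.

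Your fallback is the paper's main proof, but you stop exactly where the work lies. Three facts are needed.

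First, $A_\ell$ is the circulant on $\mathbb{Z}_{2\ell}$ with connection set $\{\pm1,\pm2\}$ (map $u_i\mapsto 2i$, $v_i\mapsto 2i-1$). Its eigenvalues are $2\cos\theta_j+2\cos 2\theta_j$ with $\theta_j=\pi j/\ell$. These vanish iff $\cos\theta_j=-1$, which gives the full alternating vector, or $\cos\theta_j=\tfrac12$, which occurs iff $3\mid\ell$. Hence $A_\ell$ is a nut graph iff $3\nmid\ell$ (Proposition~\ref{prop:gpsProp}).

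Second, $A_\ell^*$ consists of a $2\ell$-cycle of degree-$3$ vertices (the triangles) plus two vertices of degree $\ell$. A matching covers all odd-degree vertices, so Lemma~\ref{lem:hadlock} gives $\varphi(A_\ell)=\ell$ for even $\ell$ and $\varphi(A_\ell)=\ell+1$ for odd $\ell$ (Lemma~\ref{lem:antiprismFrustration}).

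Third, for even $k\ge 4$ take $A_k$ if $3\nmid k$, and $A_{k-1}$ if $3\mid k$; in the latter case $k-1$ is odd and $3\nmid k-1$. The nut condition can never exclude both candidates $\ell=k$ and $\ell=k-1$.

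Since $\varphi(A_\ell)\ge 4$ for all $\ell\ge 3$, the case $k=2$ genuinely requires the separate example. With these three steps supplied, the gadget plan can be dropped.
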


\noindent For $\ell \geq 3$, the antiprism graph, $A_\ell$, has vertex set
$V(A_\ell) = \{ u_0, \ldots, u_{\ell - 1}, v_0, \ldots, v_{\ell - 1} \}$, where
\linebreak $u_i \sim u_{i + 1}, v_i \sim v_{i + 1}, u_i \sim v_i$ and $u_i \sim
v_{i + 1}$ for all $i$ (all indices modulo $\ell$). In other words, $A_\ell$ is
the skeleton of an $\ell$-gonal antiprism. In \cite{GPS}, the following was
proved.

\begin{proposition}[{\cite{GPS}}]
\label{prop:gpsProp}
The antiprism graph $A_\ell$ of order $2\ell$ is a nut graph if and only if
$2\ell \not\equiv 0 \pmod{6}$.
\end{proposition}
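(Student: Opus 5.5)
The plan is to recognise $A_\ell$ as a circulant graph and read off its spectrum explicitly; the nut property then reduces to counting the zero eigenvalues and checking that the kernel vector is full.

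\textbf{Relabelling.} Set $n = 2\ell$ and define $w_{2i} = u_i$ and $w_{2i-1} = v_i$, with indices of $w$ taken modulo $n$. Under this labelling:
\begin{itemize}
\item the edges $u_i v_i$ and $u_i v_{i+1}$ become $w_{2i}w_{2i-1}$ and $w_{2i}w_{2i+1}$, i.e.\ differences $\pm 1$;
\item the edges $u_iu_{i+1}$ and $v_iv_{i+1}$ become differences $\pm 2$.
\end{itemize}
Hence $A_\ell \cong \mathrm{Circ}(n;\{\pm1,\pm2\})$, and $\mathbf{A}(A_\ell)$ is a circulant matrix.

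\textbf{Spectrum.} A circulant matrix is diagonalised by the characters $k \mapsto \zeta^{jk}$, where $\zeta = e^{2\pi i/n}$ and $j = 0,\ldots,n-1$. The corresponding eigenvalues are
\[
\lambda_j = 2\cos\theta_j + 2\cos 2\theta_j, \qquad \theta_j = 2\pi j/n .
\]
Write $c = \cos\theta_j$. Then $\lambda_j = 2c + 2(2c^2-1) = 2(2c-1)(c+1)$. So $\lambda_j = 0$ exactly when $c = -1$ or $c = \tfrac12$.

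\textbf{Counting zero eigenvalues.}
\begin{itemize}
\item The case $c=-1$ means $\theta_j = \pi$, i.e.\ $j = n/2$. This always occurs because $n$ is even. It contributes the real eigenvector $\x(w_k) = (-1)^k$. Directly, the local condition at $w_k$ is $-1-1+1+1=0$.
\item The case $c = \tfrac12$ means $\theta_j = \pm\pi/3$, i.e.\ $j = n/6$ or $j = 5n/6$. Such an integer $j$ exists if and only if $6 \mid n$, and then it contributes two further kernel dimensions.
\end{itemize}
Therefore $\eta(A_\ell) = 1$ when $n \not\equiv 0 \pmod 6$, and $\eta(A_\ell) = 3$ when $n \equiv 0 \pmod 6$.

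\textbf{Conclusion.} If $2\ell \not\equiv 0 \pmod 6$, the kernel is spanned by the full vector $((-1)^k)_k$, so $A_\ell$ is a nut graph. If $2\ell \equiv 0 \pmod 6$, the nullity is $3 \neq 1$, so $A_\ell$ is not a nut graph.

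The only place needing care is verifying the circulant relabelling against the stated adjacency rules; the rest is routine. In vertex terms, the kernel vector is $+1$ on every $u_i$ and $-1$ on every $v_i$.
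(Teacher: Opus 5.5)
Your proof is correct and complete. The paper gives no proof of this proposition. It is quoted from \cite{GPS} and used only as a black box in the proof of Theorem~\ref{thm:quarticAnyFrus}, so there is no internal argument to compare your route against.

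Your relabelling $w_{2i}=u_i$, $w_{2i-1}=v_i$ does identify $A_\ell$ with the circulant $\mathrm{Circ}(2\ell;\{\pm1,\pm2\})$. Two points are implicit in your write-up and worth stating explicitly:
\begin{enumerate}[label=(\roman*)]
\item For $\ell\ge 3$, the residues $\pm1,\pm2$ are four distinct classes modulo $2\ell$, so the circulant is simple and $4$-regular, as $A_\ell$ is.
\item Since $\mathbf{A}(A_\ell)$ is real symmetric, the number of indices $j$ with $\lambda_j=0$ equals the real nullity. Concretely, when $6\mid 2\ell$ the complex eigenvectors for $j=n/6$ and $j=5n/6$ combine into the real kernel vectors $k\mapsto\cos(\pi k/3)$ and $k\mapsto\sin(\pi k/3)$.
\end{enumerate}

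With these remarks, the factorisation $\lambda_j=2(2c-1)(c+1)$ gives $\eta(A_\ell)=1$ when $3\nmid\ell$ and $\eta(A_\ell)=3$ when $3\mid\ell$, exactly as you say. The spanning vector, $+1$ on every $u_i$ and $-1$ on every $v_i$, is visibly full. The circulant route also yields more than the statement asks for: the whole spectrum of $A_\ell$ and the exact nullity in the excluded case.
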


\par\noindent Noting that antiprisms are $4$-regular graphs, we first establish
the following lemma.

\begin{lemma}
\label{lem:antiprismFrustration}
Let $A_\ell$ be the antiprism graph of order $2\ell$. Then
\begin{equation}
\varphi(A_\ell) = \begin{cases}
\ell & \text{if }\ell \equiv 0 \pmod{2}, \\
\ell + 1 & \text{if }\ell \not\equiv 0 \pmod{2}.
\end{cases}
\end{equation}
\end{lemma}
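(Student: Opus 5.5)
We compute $\varphi(A_\ell)$ with Lemma~\ref{lem:hadlock}, which the paper flags for exactly this purpose. Embed $A_\ell$ in the plane in the standard way. It has $2\ell$ triangular faces and two $\ell$-gonal faces: the inner cycle $u_0\cdots u_{\ell-1}$ and the outer cycle $v_0\cdots v_{\ell-1}$. The triangles alternate around the annulus between the two cycles. In the dual $A_\ell^*$:
\begin{itemize}
\item each triangle vertex has degree $3$ (odd);
\item the two $\ell$-gon vertices $f_u, f_v$ have degree $\ell$.
\end{itemize}
The triangle vertices form a cycle $C_{2\ell}$ in the dual, because consecutive triangles share a rung edge $u_iv_i$ or $u_iv_{i+1}$. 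Triangles of the form $u_iu_{i+1}v_{i+1}$ are adjacent to $f_u$, and the others to $f_v$, alternately. So $A_\ell^*$ is $C_{2\ell}$ together with two hubs, each joined to alternate vertices of the cycle.

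\emph{Case $\ell$ even.} The odd-degree dual vertices are exactly the $2\ell$ triangle vertices. Any pairing into paths uses at least one edge per pair, so the total is at least $\ell$. This bound is attained: take a perfect matching of $C_{2\ell}$ consisting of alternate rung-dual edges, giving $\ell$ edge-disjoint paths of length one. Hence $\varphi(A_\ell)=\ell$.

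\emph{Case $\ell$ odd.} Now $f_u$ and $f_v$ also have odd degree, so there are $2\ell+2$ odd vertices. This gives the lower bound $\ell+1$, and we show it is attained. Every path has at least one edge, so a total of $\ell+1$ forces all $\ell+1$ paths to be single edges, i.e.\ a perfect matching on the odd vertices. Such a matching exists:
\begin{itemize}
\item match $f_u$ to some triangle $t$ and $f_v$ to a neighbour $t'$ of $t$ on $C_{2\ell}$, which is possible since the hubs attach alternately;
\item the remaining $2\ell-2$ cycle vertices form a path on an even number of vertices, which has a perfect matching.
\end{itemize}
Thus $\varphi(A_\ell)=\ell+1$.

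There is also a simpler combinatorial route to the lower bounds. There are $2\ell$ edge-disjoint triangular faces, which pair up into $\ell$ edge-disjoint triangles, so $\varphi\ge\ell$. For odd $\ell$, the two $\ell$-cycles are odd and edge-disjoint from each other. However, Hadlock's lemma handles everything uniformly, so I would rely on it.

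The main step needing care is the description of the dual: which triangles touch which $\ell$-gon, and whether they alternate. The matching constructions depend on this, and it is also where multigraph issues could arise for small $\ell$, such as $\ell=3$, where $A_3$ is the octahedron and the dual is the cube. For $\ell=3$ the formula gives $4$, which matches the known octahedron value. I would check the face adjacencies explicitly from the edge list $u_iu_{i+1}$, $v_iv_{i+1}$, $u_iv_i$, $u_iv_{i+1}$, namely the faces $u_iu_{i+1}v_{i+1}$ and $u_iv_iv_{i+1}$. Finally, translating the dual matchings back into primal frustrated edges confirms the values: for even $\ell$, the alternate rungs; for odd $\ell$, alternate rungs plus one cycle edge on each side.
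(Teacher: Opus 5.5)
Your proof is correct and is essentially the paper's argument: apply Lemma~\ref{lem:hadlock} to $A_\ell^*$, note that it has $2\ell$ (for even $\ell$) or $2\ell+2$ (for odd $\ell$) odd-degree vertices, and exhibit a perfect matching on them; your remark that each path has at least one edge makes explicit the minimality that the paper leaves implicit. Only your side remark is off, and you do not rely on it: the $2\ell$ triangular faces are not edge-disjoint, since consecutive triangles share a rung. For odd $\ell$, the $\ell+1$ edge-disjoint odd cycles should instead be one face colour class, e.g.\ the $u$-cycle together with the triangles $u_iv_iv_{i+1}$.
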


\begin{proof}
If $\ell$ is even, then $A_\ell^*$ has $2\ell$ odd-degree vertices and there
exists a matching $M$ that covers them. By Lemma~\ref{lem:hadlock},
$\varphi(A_\ell) = |M| = \ell$. If $\ell$ is odd, then $A_\ell^*$ has $2\ell +
2$ odd-degree vertices and there exists a matching $M$ that covers them. By
Lemma~\ref{lem:hadlock}, $\varphi(A_\ell) = |M| = \ell + 1$. (For illustrations
of $A_\ell^*$ and the matchings $M$, see Figure~\ref{fig:antiprismDuals}.)
\end{proof}

\begin{figure}[!ht]
\centering
\subcaptionbox{$A_3^*$}{\input{img/antiprism_dual_matching1.tikz}}
\qquad\qquad
\subcaptionbox{$A_4^*$}{\input{img/antiprism_dual_matching2.tikz}}
\caption{Duals of antiprisms of odd and even $\ell$ and the corresponding
matchings (indicated by dotted lines). }
\label{fig:antiprismDuals}
\end{figure}

\begin{figure}[!b]
\centering
\subcaptionbox{$\varphi=2$}{ \input{img/fig13a.tikz} }
\subcaptionbox{$\varphi=4$}{ \input{img/fig13b.tikz} }
\subcaptionbox{$\varphi=6$}{ \input{img/fig13c.tikz} }
\subcaptionbox{$\varphi=8$}{ \input{img/fig13d.tikz} }
\caption{Examples of quartic nut graphs with bipartite edge frustration from the
set $\{2, 4, 6, 8\}$.}
\label{fig:smallQuarticFrustrated}
\end{figure}

\begin{proof}[Proof of Theorem~\ref{thm:quarticAnyFrus}]
For $k = 2$, existence is settled by the example in
Figure~\ref{fig:smallQuarticFrustrated}(a). Suppose $k \geq 4$. There are three
cases:
\begin{enumerate}[label=(\alph*)]
\item If $k \equiv 0 \pmod{6}$, take the antiprism $A_{k - 1}$. As $2k - 2
\equiv 4 \pmod{6}$, $A_{k - 1}$ is a nut graph by
Proposition~\ref{prop:gpsProp}. As $k - 1$ is odd, from
Lemma~\ref{lem:antiprismFrustration} we obtain $\varphi(A_{k - 1}) = k$.
\item If $k \equiv 2 \pmod{6}$, take the antiprism $A_{k - 1}$. As $2k - 2
\equiv 2 \pmod{6}$, $A_{k - 1}$ is a nut graph by
Proposition~\ref{prop:gpsProp}. As $k - 1$ is odd, from
Lemma~\ref{lem:antiprismFrustration} we obtain $\varphi(A_{k - 1}) = k$.
\item If $k \equiv 4 \pmod{6}$, take the antiprism $A_k$. As $2k \equiv 2
\pmod{6}$, $A_k$ is a nut graph by Proposition~\ref{prop:gpsProp}. As $k$ is
even, from Lemma~\ref{lem:antiprismFrustration} we obtain $\varphi(A_k) = k$.
\qedhere
\end{enumerate}
\end{proof}

There is an alternative proof strategy similar to the one used in
\cite{AutomorphismPaper}, which we show here, as this approach can be used for
higher degrees. For the quartic case, we will need only two `building blocks',
nut graphs $G^\text{\,term}$ and $G^\text{\,link}$, where $G^\text{\,term}$ has
one degree-$2$ vertex and all others of degree $4$, but $G^\text{\,link}$ has
two degree-$2$ vertices and all others of degree $4$. We can obtain a larger
quartic nut graph by pairwise coalescence of blocks (shown schematically in
Figure~\ref{fig:schematicSausage}) and exploiting the additivity of bipartite
edge frustration under this operation (see Corollary~\ref{lem:coalAdditive}).

\begin{figure}[!htbp]
\centering
\input{img/schematic_sausage.tikz}
\caption{Schematic depiction of a regular nut graph obtained by chaining
building blocks $G^\text{\,term}_1, G^\text{\,link}_1, \ldots,
G^\text{\,link}_t$ and $G^\text{\,term}_2$ using the coalescence operation.}
 \label{fig:schematicSausage}
\end{figure}

\begin{proof}[Alternative proof of Theorem~\ref{thm:quarticAnyFrus}]
Let $G_{1}$ be the graph shown in Figure~\ref{fig:quarticBuildingBlocks}(a).
$G_1$ is a nut graph that has 13 vertices of degree 4 and one vertex of degree
2. Moreover, $\varphi(G_{1}) = 5$. Let $G_2$ be the graph shown in
Figure~\ref{fig:quarticBuildingBlocks}(b). $G_2$ is a nut graph with 7 vertices
of degree 4 and two vertices of degree 2. Moreover, $\varphi(G_2) = 2$.
\begin{figure}[!htbp]
\centering
\subcaptionbox{$G_1$, $\varphi(G_1)=5$}{
\input{img/quartic_block_g1.tikz}
}
\subcaptionbox{$G_2$, $\varphi(G_2)=2$}{
\input{img/quartic_block_g2.tikz}
}
\caption{The two graphs that serve as building blocks in the proof of
Theorem~\ref{thm:quarticAnyFrus}. Vertex labels indicate kernel eigenvector
entries. The given black-and-white colouring minimises the number of frustrated
edges in each graph. Frustrated edges are denoted with dashed lines. Coalescence
is via the vertices of degree $2$.}
\label{fig:quarticBuildingBlocks}
\end{figure}
The graphs $G_1$ and $G_2$ can serve as the blocks $G^\text{\,term}$ and
$G^\text{\,link}$, respectively. We can obtain any even frustration $k \geq 10$
by chaining $(k - 10) / 2$ copies of $G_2$ and terminating the chain with $G_1$
at each end. Table~\ref{tbl:quarticFrustration} settles the cases $k \in \{2, 4,
6, 8\}$. Examples of such graphs are shown in
Figure~\ref{fig:smallQuarticFrustrated}.
\end{proof}

The same strategy will also be used in characterising bipartite edge
frustrations that can be realised by quintic and sextic nut graphs.

\begin{theorem}
\label{thm:quinticAnyFrus}
For $k = 2$ and for every $k \geq 4$ there exists a quintic nut graph $G$ with
$\varphi(G) = k$. Moreover, there exists no quintic nut graph $G$ with
$\varphi(G) = 3$.
\end{theorem}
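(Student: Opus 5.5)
The non-existence part is immediate. Suppose $G$ is a quintic nut graph with $\varphi(G) = 3$. Since $r = 5$ and $\varphi(G)$ are both odd, Proposition~\ref{prop:explainMissingFrust}(c) gives $\varphi(G) \geq 5$, a contradiction. The same argument rules out $\varphi(G) = 1$ for quintic graphs in general. Corollary~\ref{thm:mainCorollary} rules it out for all nut graphs, and it also excludes $\varphi = 0$.

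For existence I would follow the chaining strategy of the alternative proof of Theorem~\ref{thm:quarticAnyFrus}, using Corollary~\ref{lem:coalAdditive} for additivity and Sciriha's result that coalescence of nut graphs is a nut graph. Degrees add under coalescence, so I need two kinds of block.
\begin{itemize}
\item A terminal block $G^{\text{term}}$: a nut graph with one vertex of degree $d_0$ and all others of degree $5$.
\item A link block $G^{\text{link}}$: a nut graph with exactly two special vertices and all others of degree $5$.
\end{itemize}
The degrees at coalescence points must sum to $5$, so the natural pairing is root degrees $2$ and $3$. I would therefore take two terminal types, $T_2$ with a degree-$2$ root and $T_3$ with a degree-$3$ root. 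I would also take a link $L$ with one degree-$2$ and one degree-$3$ vertex, so that chains $T_2 \odot L \odot \cdots \odot L \odot T_3$ close up with every vertex of degree $5$.

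Suppose $\varphi(L) = a$ and the terminal pair contributes $b = \varphi(T_2) + \varphi(T_3)$. Then chaining $t$ links gives frustration $b + ta$. To cover all residues I would aim for a link with $\varphi(L) = 2$ (the minimum, by Theorem~\ref{thm:mainResult}). I would then find terminal pairs realising two consecutive values of $b$, for instance by having two choices of $T_3$ whose frustrations differ by one. That settles all sufficiently large $k$ of both parities.

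The remaining small values, $k = 2$ and $k$ from $4$ up to the threshold of the chain construction, I would settle by exhibiting explicit quintic nut graphs from the computer census in the Appendix tables, as in Table~\ref{tbl:quarticFrustration}. Each would be checked for the nut property and for $\varphi$ directly.

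The main obstacle is finding suitable small blocks by computer search: nut graphs with the prescribed near-$5$-regular degree sequences and the right frustrations. For each block the work is to verify nullity one with a full kernel vector and to compute $\varphi$. Certifying the lower bound on $\varphi$ is the hard part, since it is NP-hard in general, though it is feasible for small orders by exhaustive search.
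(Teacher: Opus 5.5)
Your approach is essentially the paper's: chains of terminals and links joined by coalescence at a degree-$2$/degree-$3$ vertex pair, additivity of $\varphi$ via Corollary~\ref{lem:coalAdditive}, the census for the small cases, and Proposition~\ref{prop:explainMissingFrust}(c) to exclude $\varphi = 3$. The only difference is in the concrete blocks, which is where the real content lies and which your argument still needs exhibited (a $\varphi = 2$ link is not guaranteed to exist); the paper instead uses a link $L_5$ with $\varphi = 3$, one $T_3$ with $\varphi = 5$, and three degree-$2$ terminals with $\varphi = 4, 5, 3$, which give $5 + 3\lfloor (k-8)/3 \rfloor + \varphi(T_2^{(k \bmod 3)}) = k$ for all $k \geq 8$, while $k \in \{2,4,5,6,7\}$ are read off the $n = 18$ row of Table~\ref{tbl:quinticFrustration}.
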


\begin{proof}
Let $L_5, T_2^{(0)}, T_2^{(1)}, T_2^{(2)}$ and $T_3$ be graphs given by
\texttt{graph6}~\cite{graph6} strings in Table~\ref{tbl:quinticGraph6}. For $k
\geq 8$, we start the chain with $T_3$, followed by $\lfloor \frac{k - 8}{3}
\rfloor$ copies of $L_5$. We terminate the chain with $T_2^{(k \bmod 3)}$. The
graph $G$ obtained in this way is $5$-regular by design. As $L_5, T_2^{(0)},
T_2^{(1)}, T_2^{(2)}$ and $T_3$ are all nut graphs, $G$ is a nut graph by
Lemma~\ref{lem:coreCoalescenceNullity}. By Corollary~\ref{lem:coalAdditive},
$\varphi(G) = \varphi(T_3) + \lfloor \frac{k - 8}{3} \rfloor \varphi(L_5) +
\varphi(T_2^{(k \bmod 3)}) = k$. Table~\ref{tbl:quinticFrustration} settles the
cases where $k \in \{2, 4, 5, 6, 7\}$. The case $\varphi = 3$ is impossible for
a quintic nut graph (Proposition~\ref{prop:explainMissingFrust}).
\end{proof}

\begin{table}[!ht]
\centering
\renewcommand{\arraystretch}{1.4}
{\small
\begin{tabular}{|l|l|l|c|}
\hline
$H$ & \texttt{graph6} string & Degrees & $\varphi(H)$ \\
\hline
\hline
$L_5$ & \verb:J?BVdzw\f??: & $\{ 2^1, 3^1, 5^9 \}$ & 3 \\
$T_2^{(0)}$ & \verb:N??FEbKelWFg^?\_?H?: & $\{ 2^1, 5^{14} \}$ & 4 \\
$T_2^{(1)}$ & \verb:N??EDbkvDKN_MoTo@@?: & $\{ 2^1, 5^{14} \}$ & 5 \\
$T_2^{(2)}$ & \verb:N??C@qfNfg\_v?UoGO?: & $\{ 2^1, 5^{14} \}$ & 3 \\
$T_3$ & \verb:M??MF?|kBsMof_rO?: & $\{ 3^1, 5^{13} \}$ & 5 \\
\hline
\end{tabular}%
}
\caption{Links and terminals used for constructing quintic nut graphs with
$\varphi \geq 8$.}
\label{tbl:quinticGraph6}
\end{table}

\begin{theorem}
For every $k \geq 2$ there exists a sextic nut graph $G$ with $\varphi(G) = k$.
\end{theorem}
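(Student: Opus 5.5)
We follow the chaining strategy already used for the quartic and quintic cases. The sextic case is the easiest of the three in one respect. By Proposition~\ref{prop:explainMissingFrust}(b), a sextic graph has $\varphi(G) \equiv |V(G)| \pmod 2$, and an odd value forces $\varphi(G) \geq 3$. Together with Corollary~\ref{thm:mainCorollary}, this means no value $k \geq 2$ is excluded a priori. The work is therefore purely constructive: we must exhibit a sextic nut graph for each $k \geq 2$.

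The building blocks are nut graphs whose degree sequences are $6$ everywhere except at one or two distinguished root vertices. Coalescing roots then restores degree $6$: two degree-$3$ roots give $6$, and roots of degrees $2$ and $4$ also give $6$. We look for the following.
\begin{itemize}[label={}]
\item A \emph{link} $L_6$ with two deficient vertices, say of degrees $\{2,4\}$ or $\{3,3\}$, all others of degree $6$, with small $\varphi(L_6) = s$. Ideally $s = 2$ or $3$.
\item A \emph{starting terminal} $T$ with a single deficient vertex.
\item A family of \emph{closing terminals} $T^{(0)}, \ldots, T^{(s-1)}$ whose frustrations cover all residues modulo $s$.
\end{itemize}
The deficient degrees must be compatible at each coalescence.

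The graph for large $k$ is the chain $T$, then $\lfloor (k-c)/s \rfloor$ copies of $L_6$, then $T^{((k-c) \bmod s)}$, where $c$ is a suitable offset. This graph is $6$-regular by design. It is a nut graph by Lemma~\ref{lem:coreCoalescenceNullity}, applied repeatedly: each coalescence of nut graphs has nullity $1 + 1 - 1 = 1$ and is core. Its frustration equals the sum of the block frustrations by Corollary~\ref{lem:coalAdditive}. Choosing the block frustrations appropriately gives exactly $k$ for all $k$ beyond some threshold $k_0$.

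The finitely many values $2 \leq k < k_0$ are settled by direct computer search among small sextic nut graphs from the House of Graphs, recorded in a table as in the quintic case. For each block we also list a \texttt{graph6} string, its degree multiset, and its $\varphi$, computed with the code in the GitHub supplement.

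The main obstacle is finding the gadgets. We need nut graphs with prescribed near-regular degree sequences, and their frustrations must realise every residue modulo $s$. A small $s$ keeps the list of terminals short. We would first search for a link with $\varphi = 2$: then only two closing terminals (one with odd and one with even $\varphi$) are needed. Failing that, we would use $s = 3$ with three terminals, exactly as for quintic graphs.

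Small odd values such as $k = 3$ need care. We need an odd-order sextic nut graph with $\varphi = 3$, and by Proposition~\ref{prop:explainMissingFrust}(b) this must be an extremal case. We expect it to come from the exhaustive search rather than from the chain.
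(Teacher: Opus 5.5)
Your architecture is the paper's: near-sextic nut blocks chained by coalescence, the nut property from Lemma~\ref{lem:coreCoalescenceNullity}, additivity from Corollary~\ref{lem:coalAdditive}, and finitely many small $k$ settled by computation. The gap is that for this theorem the chaining argument is the easy part. The whole content is the existence of suitable blocks, and you only propose to search for them, so nothing is proved until explicit certificates are exhibited. The paper supplies them:
a link $L_6$ with degrees $\{2^1,4^1,6^{11}\}$ and $\varphi(L_6)=4$, so $s=4$ rather than your hoped-for $2$ or $3$;
two starting terminals $T_4^{(0)},T_4^{(1)}$ with a degree-$4$ root and $\varphi=8,7$;
two closing terminals $T_2^{(0)},T_2^{(1)}$ with a degree-$2$ root and $\varphi=4,6$.
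The four sums $11,12,13,14$ hit every residue modulo $4$, which covers all $k\ge 11$. This is a $2\times 2$ choice of terminals rather than your one starting terminal plus $s$ consecutive closing ones.

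Two parts of your plan would also fail as stated. First, the $\{3,3\}$ option for the link is impossible. The terminal glued to a degree-$3$ root would need a single vertex of degree $3$ with all others of degree $6$, i.e.\ exactly one odd-degree vertex, contradicting the handshake lemma. So links must be of type $\{2,4\}$ and terminal roots have degree $2$ or $4$, as in the paper. Second, the small cases do not all come from the exhaustive catalogue. The enumerated sextic nut graphs (orders $12$ to $15$, Table~\ref{tbl:sexticFrustration}) realise $k=3$ and every $k$ from $5$ to $15$, but neither $k=2$ nor $k=4$. Moreover, $k=2$ and $k=3$ can never arise from a chain, since a chain has at least two nut blocks, each contributing at least $2$ by Corollary~\ref{thm:mainCorollary}. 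The paper fills these holes with two ad hoc $16$-vertex sextic nut graphs, $G_6^{(2)}$ and $G_6^{(4)}$. You would need comparable targeted examples beyond the enumerated range, not just a look-up among small graphs.
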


\begin{proof}
Let $L_6, T_2^{(0)}, T_2^{(1)}, T_4^{(0)}$ and $T_4^{(1)}$ be graphs given by
\texttt{graph6} strings in Table~\ref{tbl:sexticGraph6}. For $k \geq 11$, we
start the chain with $T_4^{(k \bmod 2)}$, followed by $\lfloor \frac{k - 11}{4}
\rfloor$ copies of $L_6$. We terminate the chain with $T_2^{(\lceil k/2 \rceil
\bmod 2)}$. The graph $G$ obtained in this way is $6$-regular by design. As
$L_6, T_2^{(0)}, T_2^{(1)}, T_4^{(0)}$ and $T_4^{(1)}$ are all nut graphs, $G$
is a nut graph by Lemma~\ref{lem:coreCoalescenceNullity}. By
Corollary~\ref{lem:coalAdditive}, $\varphi(G) = \varphi(T_4^{(k \bmod 2)}) +
\lfloor \frac{k - 11}{4} \rfloor \varphi(L_6) + \varphi(T_2^{(\lceil k/2 \rceil
\bmod 2)}) = k$. Table~\ref{tbl:sexticFrustration} settles the cases where $k
\in \{ 3, 5, 6, 7, 8, 9, 10\}$. We provide graphs $G_6^{(2)}$ and $G_6^{(4)}$
which exemplify the cases $k = 2$ and $k = 4$.
\end{proof}

\begin{table}[!hbt]
\centering
\renewcommand{\arraystretch}{1.4}
{\small
\begin{tabular}{|l|l|l|c|}
\hline
$H$ & \texttt{graph6} string & Degrees & $\varphi(H)$ \\
\hline
\hline
$L_6$ & \verb:L??FUyy~FsNK{?: & $\{ 2^1, 4^1, 6^{11} \}$ & 4 \\
$T_2^{(0)}$ & \verb:M?AFrzszEx^ONC?O_: & $\{ 2^1, 6^{13} \}$ & 4 \\
$T_2^{(1)}$ & \verb:M?AFbZ]nby]_}OA@?: & $\{ 2^1, 6^{13} \}$ & 6 \\
$T_4^{(0)}$ & \verb:JCrvbql~Br?: & $\{ 4^1, 6^{10} \}$ & 8 \\
$T_4^{(1)}$ & \verb:KCQfex{zTwND: & $\{ 4^1, 6^{11} \}$ & 7 \\
\hline
\hline
$G_6^{(2)}$ & \verb:O_???A{[}]Vo]ozORwBm?: & $\{ 6^{16} \}$ & 2 \\
$G_6^{(4)}$ & \verb:O??_AB{{s]Ro]oyPRwBkO: & $\{ 6^{16} \}$ & 4 \\
\hline
\end{tabular}%
}
\caption{Links and terminals used for constructing sextic nut graphs with
$\varphi \geq 11$, and two ancillary graphs used for completion of the proof.}
\label{tbl:sexticGraph6}
\end{table}

\begin{theorem}
For $k \in \{ 2, 4 \}$ and for every $k \geq 6$ there exists a septic nut graph
$G$ with $\varphi(G) = k$. Moreover, there exists no septic nut graph $G$ with
$\varphi(G) \in \{ 3, 5 \}$.
\end{theorem}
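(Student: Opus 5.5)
The proof splits into a non-existence part and an existence part.

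\textbf{Non-existence.} For a septic ($r = 7$) nut graph $G$, Corollary~\ref{thm:mainCorollary} already excludes $\varphi(G) \in \{0,1\}$. If $\varphi(G)$ is odd, Proposition~\ref{prop:explainMissingFrust}(c) with $r = 7$ gives $\varphi(G) \geq 7$. Hence $\varphi(G) \in \{3,5\}$ is impossible.

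\textbf{Existence.} I will use the same chaining strategy as for the quintic and sextic cases. I need a small library of septic building blocks, all of them nut graphs:
\begin{itemize}
\item a \emph{link} $L_7$, with one vertex of degree $d$, one of degree $7-d$ (for example degrees $2$ and $5$, or $3$ and $4$), and all other vertices of degree $7$;
\item \emph{terminals} $T_d^{(j)}$, with a single deficient vertex of degree $d$ and all others of degree $7$;
\item complementary terminals $T_{7-d}$, which close the chain.
\end{itemize}
Coalescing a terminal, $t$ links and a closing terminal at their deficient vertices gives a $7$-regular graph by design. The deficient degrees must add up to $7$ at each identified vertex. By Lemma~\ref{lem:coreCoalescenceNullity} with nullities $1$, the chain is a nut graph. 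By Corollary~\ref{lem:coalAdditive}, its frustration is the sum of the blocks' frustrations.

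If $\varphi(L_7) = p$, the residues modulo $p$ have to be covered by varying the terminals. With $p = 3$, say, this means three terminal variants $T^{(j)}$ whose frustrations run through all residues mod $3$. One long chain then realises every $k \geq k_0$, exactly as in the proof of Theorem~\ref{thm:quinticAnyFrus}.

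The finitely many values $k \in \{2,4\} \cup \{6,\ldots,k_0-1\}$ must be settled directly. I would exhibit explicit septic nut graphs for them, found by computer search over small $7$-regular graphs, or composed from pieces with a shorter chain. These would be tabulated via \texttt{graph6} strings, as in Table~\ref{tbl:sexticGraph6}. The odd values $7, 9, \ldots$ must be included; this is consistent with parity, since chain blocks need not be regular and Proposition~\ref{prop:explainMissingFrust} only applies to the final graph.

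\textbf{Main obstacle.} The hard step is purely computational: finding the blocks and the small examples.
\begin{itemize}
\item Small septic nut graphs require order at least about $12$, and the blocks need orders around $12$--$16$.
\item For each candidate I must verify the nut property (nullity one with a full kernel vector) and compute $\varphi$ exactly. Computing $\varphi$ is NP-hard in general, but feasible at these sizes.
\item It is particularly delicate to find small septic nut graphs with $\varphi = 2$ and $\varphi = 4$, and with odd $\varphi$ just above the bound $7$.
\end{itemize}
I would first try generating $7$-regular graphs of order $12$--$16$ and filtering them, falling back to the antiprism-style or other circulant families if direct generation is too costly.
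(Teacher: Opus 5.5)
Your non-existence argument is exactly the paper's: Proposition~\ref{prop:explainMissingFrust}(c) with $r=7$. Your existence strategy is also the paper's:
\begin{itemize}
\item a terminal--link--terminal chain glued by coalescence at the deficient vertices;
\item nut-ness from Lemma~\ref{lem:coreCoalescenceNullity} and additivity of $\varphi$ from Corollary~\ref{lem:coalAdditive};
\item terminal variants covering the residues modulo $\varphi(L_7)$;
\item explicit graphs for the leftover small $k$.
\end{itemize}
The gap is that the existence half of this theorem \emph{consists} of the certificates, and you supply none. You have only reduced the statement to the unproved hypothesis that suitable blocks and small examples exist. Nothing in your argument shows, for instance, that a septic nut graph with $\varphi=2$ exists at all.

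The paper closes this with explicit \texttt{graph6} data:
\begin{itemize}
\item a link $L_7$ with degrees $\{2^1,5^1,7^{13}\}$ and $\varphi(L_7)=8$;
\item terminals $T_2^{(0)},T_2^{(1)}$ with degrees $\{2^1,7^{16}\}$ and $\varphi=7,6$, and terminals $T_5^{(0)},\dots,T_5^{(3)}$ with $\varphi=17,11,13,15$;
\item the chain $T_2^{(k \bmod 2)}$, then $\lfloor (k-17)/8\rfloor$ copies of $L_7$, then $T_5^{(\lceil k/2\rceil \bmod 4)}$, which realises every $k\ge 17$;
\item the exhaustive enumeration of septic nut graphs on $14$ vertices, which covers $8\le k\le 16$;
\item ad hoc $7$-regular nut graphs $G_7^{(2)},G_7^{(4)},G_7^{(6)}$ on $20$ vertices and $G_7^{(7)}$ on $18$ vertices.
\end{itemize}

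Two parts of your search plan would also need changing.

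First, $k=2$ can never come from a chain. Each block is a nut graph, so it contributes at least $2$ by Corollary~\ref{thm:mainCorollary}. Hence any chain of two or more blocks has $\varphi\ge 4$. With $\{2,5\}$-type terminals the bound is even $\varphi\ge 6$. To see this, apply the bichromatic-edge count from the proof of Proposition~\ref{prop:explainMissingFrust} to a graph with degrees $\{2^1,7^m\}$ or $\{5^1,7^m\}$. It gives $\varphi_b-\varphi_w\equiv\pm1 \pmod 7$. So such a terminal with even $\varphi$ has $\varphi\ge 6$, and one with odd $\varphi$ has $\varphi\ge 3$. Therefore $k=2$ and $k=4$ (and $k=6,7$ in the paper) need directly found examples rather than a short chain.

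Second, orders $12$ and $14$ certainly do not suffice, since the complete data there contain only $\varphi\ge 8$. The paper's examples for $k\in\{2,4,6,7\}$ have $18$ or $20$ vertices, so a targeted rather than exhaustive search is needed.
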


\begin{proof}
Let $L_7, T_2^{(0)}, T_2^{(1)}, T_5^{(0)}, T_5^{(1)}, T_5^{(2)}$ and $T_5^{(3)}$
be graphs given by \texttt{graph6} strings in Table~\ref{tbl:septicGraph6}. For
$k \geq 17$, we start the chain with $T_2^{(k \bmod 2)}$, followed by $\lfloor
\frac{k - 17}{8} \rfloor$ copies of $L_7$. We terminate the chain with
$T_5^{(\lceil k/2 \rceil \bmod 4)}$. The graph $G$ obtained in this way is
$7$-regular by design. As $L_7, \ldots, T_5^{(3)}$ are all nut graphs, $G$ is a
nut graph by Lemma~\ref{lem:coreCoalescenceNullity}. By
Corollary~\ref{lem:coalAdditive}, $\varphi(G) = \varphi(T_2^{(k \bmod 2)}) +
\lfloor \frac{k - 17}{8} \rfloor \varphi(L_7) + \varphi(T_5^{(\lceil k/2 \rceil
\bmod 4)}) = k$. Table~\ref{tbl:septicFrustration} settles the cases where $k
\in \{ 8, 9, \ldots, 16 \}$. We provide graphs $G_7^{(k)}$ in
Table~\ref{tbl:septicGraph6} which exemplify the cases $k \in \{ 2, 4, 6, 7 \}$.
The cases $\varphi = 3$ and $\varphi = 5$ are impossible for a septic nut graph
(Proposition~\ref{prop:explainMissingFrust}).
\end{proof}

\begin{table}[!hbt]
\centering
\renewcommand{\arraystretch}{1.4}
{\small
\begin{tabular}{|l|l|l|c|}
\hline
$H$ & \texttt{graph6} string & Degrees & $\varphi(H)$ \\
\hline
\hline
$L_7$ & \verb:N_E]u?NBoM~BVY{YYE?: & $\{ 2^1, 5^1, 7^{13} \}$ & 8  \\
$T_2^{(0)}$ & \verb:P???Bbkfq}\[|gnc^oB{O_O?: & $\{ 2^1, 7^{16} \}$ & 7  \\
$T_2^{(1)}$ & \verb:P???FaM^by|wvo}g]sD{??GC: & $\{ 2^1, 7^{16} \}$ & 6  \\
$T_5^{(0)}$ & \verb:OQ{TsDhG[irsPUPEweYmI: & $\{ 5^1, 7^{15} \}$ & 17  \\
$T_5^{(1)}$ & \verb:OCUjUWua~?ix{@uGATWIv: & $\{ 5^1, 7^{15} \}$ & 11  \\
$T_5^{(2)}$ & \verb:MFm@]NeVBRVqT[ws_: & $\{ 5^1, 7^{13} \}$ & 13  \\
$T_5^{(3)}$ & \verb:M{VAhkwYlRnsb]do?: & $\{ 5^1, 7^{13} \}$ & 15  \\
\hline
\hline
$G_7^{(2)}$ & \verb:S?G?????F{]MxWysY{C|_N[@yoA}_CNc?: & $\{ 7^{20} \}$ & 2  \\
$G_7^{(4)}$ & \verb:S????B??D{Ng}g{[W}C]oNs@joBkgCz_?: & $\{ 7^{20} \}$ & 4  \\
$G_7^{(6)}$ & \verb:S_???A?C@{^gyg{[WzC^_N[?zoBNOA{c?: & $\{ 7^{20} \}$ & 6  \\
$G_7^{(7)}$ & \verb:Q????Bwpvw]Xb{NkNgB]?uoIv?O: & $\{ 7^{18} \}$ & 7  \\
\hline
\end{tabular}%
}
\caption{Links and terminals used for constructing septic nut graphs with
$\varphi \geq 17$, and four ancillary graphs used for completion of the proof.}
\label{tbl:septicGraph6}
\end{table}

\begin{theorem}
\label{thm:octicAnyFrus}
For every even $k \geq 2$ there exists an octic nut graph $G$ with $\varphi(G) =
k$. Moreover, there exists no octic nut graph $G$ with odd $\varphi(G)$.
\end{theorem}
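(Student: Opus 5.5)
The non-existence part is immediate. Since $8 \equiv 0 \pmod{4}$, Proposition~\ref{prop:explainMissingFrust}(a) says that every $8$-regular graph has even $\varphi$. In particular, no octic nut graph can have odd $\varphi(G)$.

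For existence I follow the chaining strategy already used for the quartic, quintic, sextic and septic cases. I would find, by computer search, a small set of octic building blocks, each of which is a nut graph:
\begin{itemize}
\item a link $L_8$ with two root vertices whose degrees $a$ and $b$ satisfy $a+b=8$, all other vertices having degree $8$;
\item one or more terminals $T_a^{(i)}$ and $T_b^{(j)}$, each with a single root of degree $a$ or $b$ and all other vertices of degree $8$.
\end{itemize}
Coalescing a terminal, a sequence of copies of $L_8$, and a second terminal then gives a graph in which every vertex has degree $8$. Lemma~\ref{lem:coreCoalescenceNullity}, applied repeatedly to nut graphs of nullity one, shows that the result has nullity $1+1-1=1$ and is a core graph, hence a nut graph. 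Corollary~\ref{lem:coalAdditive} gives $\varphi$ of the chain as the sum of the $\varphi$ values of its blocks.

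Suppose $\varphi(L_8)=p$. I need the terminal pairs to realise every even residue class modulo $p$ above some threshold $k_0$. Concretely, I would take a fixed starting terminal $T$ and a family of closing terminals $T^{(r)}$ whose $\varphi$ values cover each even residue modulo $p$. Then for even $k\ge k_0$, the chain consisting of $T$, then $\lfloor (k-k_0)/p\rfloor$ copies of $L_8$, then $T^{(r)}$ with $r$ chosen by the residue of $k$, has frustration exactly $k$. For parity bookkeeping it is convenient to choose $p$ even, say $p=2$ or $p=4$, so that only a few terminals are needed.

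There are finitely many small even values $2\le k<k_0$ not reached by chains. For these I would exhibit explicit octic nut graphs found by search, given by \texttt{graph6} strings, and record the frustrations in a table as for the lower degrees. The value $k=2$ in particular needs a dedicated example. An alternative source of regular examples is circulant graphs.

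The main obstacle is computational. One must find octic blocks that are nut graphs with the prescribed degrees and controlled frustrations, and octic nut graphs for each small even $k$, especially $k=2$, where the graph must be very close to bipartite yet $8$-regular. I would search among graphs on about $11$ to $20$ vertices, for example by adding edges to a bipartite $8$-regular-like base, and verify nullity, fullness of the kernel vector and $\varphi$ with the exact algorithm from the repository.
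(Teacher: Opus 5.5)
Your proposal follows the paper's proof: you rule out odd $\varphi$ by Proposition~\ref{prop:explainMissingFrust}(a), and you obtain existence by chaining a link between two terminals via coalescence, using Lemma~\ref{lem:coreCoalescenceNullity} and Corollary~\ref{lem:coalAdditive}, with explicit graphs for the finitely many small $k$. The part you defer to a computer search is exactly the data the paper supplies in Table~\ref{tbl:octicGraph6}. This is a link $L_8$ with roots of degrees $2$ and $6$ and $\varphi(L_8)=10$ (rather than the $p=2$ or $4$ you hoped for), a starting terminal $T_2$ with $\varphi=15$, and five closing terminals $T_6^{(r)}$ with $\varphi\in\{13,15,17,19,21\}$. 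These values are odd, which is allowed because the blocks are not regular, so it is the pair sums $28,\dots,36$ that cover the even residues modulo $10$; this gives every even $k\ge 28$, while $k\in\{12,\dots,22\}$ comes from Table~\ref{tbl:octicFrustration} and $k\in\{2,4,6,8,10,24,26\}$ from the ancillary graphs $G_8^{(k)}$.
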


\begin{proof}
Let $L_8, T_2, T_6^{(0)}, T_6^{(1)}, T_6^{(2)}, T_6^{(3)}$ and $T_6^{(4)}$ be
graphs given by \texttt{graph6} strings in Table~\ref{tbl:octicGraph6}. For $k
\geq 28$, we start the chain with $T_2$, followed by $\lfloor \frac{k - 28}{10}
\rfloor$ copies of $L_8$. We terminate the chain with $T_6^{(\lceil k/2 \rceil
\bmod 5)}$. The graph $G$ obtained in this way is $8$-regular by design. As
$L_8, \ldots, T_6^{(4)}$ are all nut graphs, $G$ is a nut graph by
Lemma~\ref{lem:coreCoalescenceNullity}. By Corollary~\ref{lem:coalAdditive},
$\varphi(G) = \varphi(T_2) + \lfloor \frac{k - 28}{10} \rfloor \varphi(L_8) +
\varphi(T_6^{(\lceil k/2 \rceil \bmod 5)}) = k$.
Table~\ref{tbl:octicFrustration} settles the cases where $k \in \{ 12, 14,
\ldots, 22 \}$. We provide graphs $G_8^{(k)}$ in Table~\ref{tbl:octicGraph6}
which exemplify the cases $k \in \{ 2, 4, 6, 8, 10, 24, 26 \}$. Odd values of
$k$ are ruled out by Proposition~\ref{prop:explainMissingFrust}.
\end{proof}

\begin{table}[!hbt]
\centering
\renewcommand{\arraystretch}{1.4}
{\small
\begin{tabular}{|l|l|l|c|}
\hline
$H$ & \texttt{graph6} string & Degrees & $\varphi(H)$ \\
\hline
\hline
$L_8$ & \verb:NruT~UPD^Nlu?@^Kle?: & $\{ 2^1, 6^1, 8^{13} \}$ & 10  \\
$T_2$ & \verb:NwfZlqRsjuLn}SJm_c?: & $\{ 2^1, 8^{14} \}$ & 15  \\
$T_6^{(0)}$ & \verb:LeZL|~mmPxmVlV: & $\{ 6^1, 8^{12} \}$ & 15  \\
$T_6^{(1)}$ & \verb:L{]tY^F]vx[]F^: & $\{ 6^1, 8^{12} \}$ & 17  \\
$T_6^{(2)}$ & \verb:Lfh}X]~\}a\J}b: & $\{ 6^1, 8^{12} \}$ & 19  \\
$T_6^{(3)}$ & \verb:N[|Tq^VTRxO}lTyEcRg: & $\{ 6^1, 8^{14} \}$ & 21  \\
$T_6^{(4)}$ & \verb:KDfnXn|}vu~k: & $\{ 6^1, 8^{11} \}$ & 13  \\
\hline
\hline
$G_8^{(2)}$ & \verb:S????G??F}^e{{rkV{B|_}sBloEfa@~_?: & $\{ 8^{20} \}$ & 2  \\
$G_8^{(4)}$ & \verb:Q??O??_~vtZk}w|w\{E~?nw@nK?: & $\{ 8^{18} \}$ & 4  \\
$G_8^{(6)}$ & \verb:Q??__?C~vt^k}wxqX{ErBnw@~_?: & $\{ 8^{18} \}$ & 6  \\
$G_8^{(8)}$ & \verb:QB?@??O~v|Zkuw|W\{C^Gjodn`?: & $\{ 8^{18} \}$ & 8  \\
$G_8^{(10)}$ & \verb:OCc@?Jt~v\M|}ozozwB}@: & $\{ 8^{16} \}$ & 10  \\
$G_8^{(24)}$ & \verb:Q`N@oc@vVSscUjXX\{CjIfk``lO: & $\{ 8^{18} \}$ & 24  \\
$G_8^{(26)}$ & \verb:SoEAAFBHFaVfska{f`Bw@MoKLWxNcA{aC: & $\{ 8^{20} \}$ & 26  \\
\hline
\end{tabular}%
}
\caption{Links and terminals used for constructing octic nut graphs with
$\varphi \geq 28$, and seven ancillary graphs used for completion of the proof.}
\label{tbl:octicGraph6}
\end{table}

\subsection{A construction that preserves bipartite edge frustration}
\label{subsec:fowler}

Construction of larger nut graphs from smaller has been a topic of interest
since the first paper \cite{Gutman1998}. One strategy proceeds by subdividing
chosen sets of edges of the parent nut graph $G$. The construction in {\S}{4.1}
of \cite{Gutman1998} (we will call it the subdivision construction) inserts four
vertices into each chosen edge and can be applied to any edge of $G$. The
construction in {\S}{4.3} of the same paper (the bridge construction) inserts
two vertices into a bridge in $G$. As the next proposition shows, both of these
constructions preserve bipartite edge frustration.

\begin{proposition}
\label{prop:bridgeSubdivPreserve}
The following statements hold.
\begin{enumerate}[label=(\alph*)]
\item Let $G$ be a graph and let $e = uv \in E(G)$. Let $\mathcal{S}(G, e)$ be
the graph obtained from $G$ by subdividing the edge $e$ with four new vertices.
Then $\varphi(\mathcal{S}(G, e)) = \varphi(G)$.
\item Let $G$ be a graph and let $e = uv \in E(G)$ be a bridge. Let
$\mathcal{B}(G, e)$ be the graph obtained from $G$ by subdividing the bridge $e$
with two new vertices. Then $\varphi(\mathcal{B}(G, e)) = \varphi(G)$.
\end{enumerate}
\end{proposition}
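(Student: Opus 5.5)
For (a), I will compare frustration sets of $G$ and $\mathcal{S}(G,e)$ directly via bipartitions. Let the new path be $u\,w_1\,w_2\,w_3\,w_4\,v$, of length $5$, which has the same parity as the original edge $uv$. Given any bipartition of $V(G)$ with frustration set $\omega$, I extend it to $\mathcal{S}(G,e)$ by alternating colours along the path starting from $u$. If $u$ and $v$ receive different colours, the colours alternate all the way to $v$ and no edge of the path is frustrated; this matches the case $e\notin\omega$. If $u$ and $v$ share a colour, exactly one path edge is frustrated, say $w_4v$; this matches $e\in\omega$. Hence $\varphi(\mathcal{S}(G,e))\le\varphi(G)$.

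For the reverse inequality, I take an optimal bipartition of $\mathcal{S}(G,e)$ and restrict it to $V(G)$. The edges of $G$ other than $e$ keep their status. If $e$ is frustrated in the restriction, then $u$ and $v$ share a colour. A path of odd length $5$ joining two vertices of the same colour must contain at least one frustrated edge, so the path contributes at least one. If $e$ is not frustrated, the path contributes at least $0$. In both cases the frustration count of the restriction in $G$ is at most that of $\mathcal{S}(G,e)$, so $\varphi(G)\le\varphi(\mathcal{S}(G,e))$.

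An alternative viewpoint also works: odd cycles of $\mathcal{S}(G,e)$ correspond to odd cycles of $G$, and any path edge in a transversal can be swapped for $e$. I will present the bipartition argument because it is cleaner.

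For (b), the same parity argument does not apply, since a path of length $3$ has odd length again. So the argument above already works verbatim, and parity preservation is all that is needed. More simply, $e$ is a bridge, so by Lemma~\ref{lem:blockAdditive} the blocks of $\mathcal{B}(G,e)$ are those of $G$ with the bridge $K_2$ replaced by three $K_2$ bridges. Each bridge has $\varphi=0$, so the sum is unchanged.

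I do not expect a serious obstacle. The only point needing care is the claim in the lower bound that an odd-length path between equally coloured endpoints contains a frustrated edge. This follows because each non-frustrated edge flips the colour, and an odd number of flips would force the endpoints to differ.
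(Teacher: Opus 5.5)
Your proof is correct. The paper gives no proof of this proposition (it calls it straightforward and omits it), so there is no different route to compare against. Your argument for (a) is the natural one: extend a colouring alternately along the new odd path for the upper bound, and use parity on the odd path for the lower bound. It matches the paper's remark at the end of \S\ref{sec:newConstruct}, which says that in an optimal colouring a path through degree-$2$ vertices carries at most one frustrated edge.

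One correction to your write-up of (b). The sentence saying the parity argument ``does not apply'' is wrong and contradicts the sentence right after it. The path $u\,w_1\,w_2\,v$ has odd length $3$, so your proof of (a) goes through verbatim. That proof never uses that $e$ is a bridge, so double subdivision of an arbitrary edge preserves $\varphi$. The bridge hypothesis matters only for the nut property in Theorem~\ref{thm:GutmanTheorem}. This general fact is what the paper later relies on when it asserts that $\mathcal{C}(G,A)$, $\mathcal{F}(G,A)$ and $\mathcal{O}(G)$ preserve $\varphi$. Your block-additivity argument via Lemma~\ref{lem:blockAdditive} is also a valid proof of (b).
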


\noindent The proof is straightforward and thus not given here. Another
versatile construction for producing a larger nut graph from a smaller was
introduced in~\cite{GPS}. It is defined below.

\begin{definition}
\label{def:fowler}
Let $G$ be a graph and let $v \in V(G)$ be a vertex of degree $d$ in $G$. Let
$G(v) = \{u_1, \ldots, u_d\}$. The graph $F(G, v)$, is obtained from $G$ in the
following way: (a) edges incident to $v$ are deleted; (b) let $w_1, \ldots, w_d$
and $x_1, \ldots, x_d$ denote $2d$ newly added vertices; (c) new edges are added
such that $x_i \sim u_i$ for $i = 1, \ldots, d$; and $x_i \sim w_j$ for $i \neq
j$, $1 \leq i, j \leq d$; and $w_i \sim v$ for all $i = 1, \ldots, d$.
\end{definition}

\par\noindent The construction $F(G, v)$ is sometimes called `the Fowler
construction' in the nut-graph literature. If $G$ is a nut graph and $v$ is an
arbitrary vertex, then $F(G,v)$ is also a nut graph~\cite{GPS}. The construction
has the nice property that it produces a regular graph $F(G, v)$ from a regular
graph $G$. As we will show next, it not only preserves regularity, it also
preserves frustration. We note in passing that if $v$ is a vertex of degree $2$
then $F(G, v)$ is isomorphic to $\mathcal{S}(G, e)$, where $e$ is either of the
two edges incident to $v$.

\begin{lemma}
\label{lem:fowlerFrustration}
Let $G$ be a simple graph and let $v \in V(G)$. Then $\varphi(G) = \varphi(F(G,
v))$.
\end{lemma}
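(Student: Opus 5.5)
We prove the two inequalities $\varphi(F(G,v)) \leq \varphi(G)$ and $\varphi(G) \leq \varphi(F(G,v))$ separately, each time transferring a bipartition (a black/white colouring) from one graph to the other without increasing the number of frustrated edges. Write $H = F(G,v)$, $d = \deg_G(v)$, and keep the notation $u_i, w_i, x_i$ of Definition~\ref{def:fowler}. Every vertex other than the new ones and $v$ itself is common to both graphs. Every edge of $G$ not incident to $v$ is also an edge of $H$. The edges incident to $v$ are replaced by the gadget on $\{v, w_1,\dots,w_d, x_1,\dots,x_d\}$.

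\textbf{Upper bound.} Take a colouring $c$ of $V(G)$ realising $\varphi(G)$ and extend it to $H$. Keep $c$ on $V(G)$, including $v$. Colour every $w_i$ opposite to $c(v)$, so all edges $vw_i$ are unfrustrated. Colour every $x_i$ opposite to the $w$'s, that is, $c(x_i) = c(v)$, so all edges $x_iw_j$ are unfrustrated. The only remaining gadget edges are $x_iu_i$, and $x_iu_i$ is frustrated exactly when $c(u_i) = c(v)$, i.e.\ exactly when $vu_i$ was frustrated in $G$. Hence this colouring of $H$ has exactly $\varphi(G)$ frustrated edges, so $\varphi(H) \leq \varphi(G)$.

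\textbf{Lower bound.} This is the harder step, since an optimal colouring of $H$ need not have the tidy gadget structure above. Start with an optimal colouring $c'$ of $H$ and restrict it to $V(G)$. Edges of $G$ not incident to $v$ contribute identically in both graphs, so it suffices to show
\[
\#\{i : c'(u_i) = c'(v)\} \leq f,
\]
where $f$ is the number of frustrated gadget edges of $H$, i.e.\ those among $vw_i$, $w_jx_i$ ($i \neq j$) and $x_iu_i$. If $c'(v)$ is a poor choice we may first recolour $v$ in $G$, which can only help. So it is enough to exhibit, for some colour $\gamma$, at least $\#\{i : c'(u_i) = \gamma\}$ frustrated gadget edges, or else to argue that $c'$ may be normalised without loss.

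\textbf{Normalisation.} We will try to show that $c'$ can be replaced by a colouring with all $w_i$ coloured alike and all $x_i$ coloured alike, with no more frustrated edges. Once this holds, the gadget contributes at least $\#\{i: c'(u_i)=\gamma\}$ frustrated edges with $\gamma = c'(x_1)$, and colouring $v$ with $\gamma$ in $G$ gives the bound.

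A direct counting argument would handle the normalisation. Suppose $a$ of the $w$'s are black and $d-a$ are white. Then each $x_i$ sees the colours of the $w_j$, $j \neq i$, which is $a$ or $a-1$ black neighbours among $d-1$. For $d \geq 3$, with $1 \leq a \leq d-1$, the mixed $w$'s force many frustrated $w$--$x$ edges: roughly $\min(a, d-a) - 1$ per $x_i$. This should dominate the at most $d$ edges $x_iu_i$ we could save. Small $d$ and the boundary cases $a \in \{1, d-1\}$ will need checking by hand; $d = 2$ can alternatively be dispatched via Proposition~\ref{prop:bridgeSubdivPreserve}(a), and $d \leq 1$ is trivial. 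I expect this case analysis for the lower bound to be the main obstacle.
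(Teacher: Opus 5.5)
Your upper bound $\varphi(F(G,v))\le\varphi(G)$ is correct and complete. Your overall plan (transfer colourings in both directions, normalising the gadget in an optimal colouring of $F(G,v)$) is the same as the paper's. The problem is that the lower bound, the only non-trivial half of the lemma, is never actually proved: it rests on ``should dominate'' and an unspecified check by hand.

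The estimate you do offer, $\min(a,d-a)-1$ forced frustrated edges per $x_i$, is zero whenever $\min(a,d-a)=1$. In particular it is zero for $a\in\{1,d-1\}$, so it gives nothing in exactly the cases you flag. You are also aiming at a stronger target than necessary. You compare with ``the at most $d$ edges $x_iu_i$''. By your own reduction, it suffices to show $f\ge\min\{d_b,d_w\}$, where $d_b,d_w$ are the numbers of black and white $u_i$ under $c'$, and $\min\{d_b,d_w\}\le\lfloor d/2\rfloor$.

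With that target, no case analysis on $a$ is needed; this is the idea the paper uses.
\begin{itemize}
\item If some $w_i,w_j$ receive different colours, then one of $vw_i,vw_j$ is frustrated. Each of the $d-2$ common neighbours $x_k$ ($k\ne i,j$) of $w_i$ and $w_j$ is incident with a frustrated edge to one of them. Hence $f\ge d-1\ge\lfloor d/2\rfloor$.
\item If all $w_i$ have the same colour $\beta$, then an $x_i$ coloured $\beta$ already gives $d-1$ frustrated edges, and $v$ coloured $\beta$ gives $d$.
\item In the remaining case, $v$ and every $x_i$ have colour $\bar\beta$. The frustrated gadget edges are then precisely the $x_iu_i$ with $c'(u_i)=\bar\beta$, so $f\ge\min\{d_b,d_w\}$ (take $\gamma=\bar\beta$).
\end{itemize}
This works for every $d\ge 2$, so you do not even need Proposition~\ref{prop:bridgeSubdivPreserve} for $d=2$. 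The paper phrases the same counting as optimality of the ``tidy'' extension, obtaining $\varphi(F(G,v))=\min_c\{\Phi(c)+\min\{d_b(c),d_w(c)\}\}=\varphi(G)$.
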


\begin{proof}
If $\deg(v) < 2$ then the claim follows immediately. If $\deg(v) = 2$ then $F(G,
v) \cong \mathcal{S}(G, e)$, where $e$ is an edge incident with $v$. This case
is thus covered by Proposition~\ref{prop:bridgeSubdivPreserve}. We assume from
now on that $\deg(v) \geq 3$.

We use the notation of Definition~\ref{def:fowler}. Let $c \colon G(v) \to \{ b,
w \}$ be a colouring of the neighbourhood of vertex $v$. (We use two colours,
black and white, denoted $b$ and $w$ for brevity.) Let $d_b(c) = |c^{-1}(b)|$,
i.e.\ the number of neighbours of $v$ that are coloured black. Similarly, let
$d_w(c) = |c^{-1}(w)|$. We could extend the colouring $c$ to all vertices $V(G)
\setminus \{ v \}$, so that the number of frustrated edges in $G - v$ is
minimised, and let $\Phi(c)$ denote the minimum number of frustrated edges in $G
- v$ for the given colouring $c$. If we also want to minimise the number of
frustrated edges among those that are incident with $v$, it is clear that vertex
$v$ has to be coloured black if $d_b(c) < d_w(c)$, and white otherwise. The
number of frustrated edges among those where $v$ is an endvertex is then $\min\{
d_b(c), d_w(c) \}$. The frustration of the graph $G$ is then $\varphi(G) = \min
\{ \Phi(c) + \min\{ d_b(c), d_w(c) \} \mid c \in \{b, w\}^{G(v)} \}$.

\begin{figure}[!htbp]
\centering
\subcaptionbox{$G$}{\input{img/fowler01.tikz}}
 \qquad
\subcaptionbox{$F(G, v)$}{\input{img/fowler02.tikz}}
\caption{Schematic illustration of extension of a colouring from $G$ to $F(G,v)$
that preserves the number of frustrated edges.}
\label{fig:fowlerColouring}
\end{figure}

Now use the same colouring $c$ on the $u$-layer in $F(G, v)$. Note that $G - v$
is isomorphic to $H \coloneqq F(G, v) - \{v, w_1, \ldots, w_d, x_1, \ldots, x_d
\}$, so the minimum number of frustrated edges in $H$ is also $\Phi(c)$. Let us
colour the entire $x$-layer in $F(G, v)$ with black if $d_b(c) < d_w(c)$, and
white otherwise. The vertex $v$ in $F(G, v)$ should then be coloured with the
same, and the $w$-layer with the opposite colour. (See
Figure~\ref{fig:fowlerColouring}.) The number of additional frustrated edges
under the colouring defined in this way is $\min\{ d_b(c), d_w(c) \}$. Provided
that this way of colouring is optimal, we have $\varphi(F(G, v)) = \min \{
\Phi(c) + \min\{ d_b(c), d_w(c) \} \mid c \in \{b, w\}^{G(v)} \} = \varphi(G)$.

It remains to show that the above is indeed an optimal colouring. First, we note
that all vertices in the $w$-layer are of the same colour. To obtain a
contradiction, suppose that two vertices, $w_i$ and $w_j$, are coloured
differently. Vertex $v$ is a neighbour of both $w_i$ and $w_j$; however we
colour $v$, we obtain at least one frustrated edge among those connecting $v$ to
the $w$-layer. Moreover, $w_i$ and $w_j$ have $d-2$ common neighbours in the
$x$-layer. However we colour these common neighbours, we obtain a further $d -
2$ frustrated edges among those connecting $x$-layer to $w$-layer. Thus, we have
at least $d - 1$ frustrated edges, but $d - 1 > \min\{ d_b(c), d_w(c) \}$ as
$\min\{ d_b(c), d_w(c) \} \leq \lfloor d / 2\rfloor$, a contradiction.

Next, we show that all vertices of the $x$-layer must have the opposite colour
to those of the $w$-layer. To obtain a contradiction, suppose that a vertex
$x_i$ has the same colour as the $w$-layer. Vertex $x_i$ has $d - 1$ neighbours
in the $w$-layer, which induces at least $d - 1$ frustrated edges, a
contradiction. The result follows.
\end{proof}

\subsection{Cubic polyhedral nut graphs}

The next result from~\cite{Fowler2021} tells us that the construction described
in {\S}\ref{subsec:fowler} preserves polyhedrality for cubic graphs.

\begin{lemma}[{\cite[Proposition 6]{Fowler2021}}]
\label{lem:fowlerPolyhedral}
Let $G$ be a cubic polyhedral graph and let $v \in V(G)$. Then $F(G, v)$ is also
a cubic polyhedral graph.
\end{lemma}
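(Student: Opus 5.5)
The plan is to verify the two properties separately: that $F(G,v)$ is cubic, and that it is planar and $3$-connected (polyhedral, by Steinitz's theorem).

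\emph{Cubicity.} Let $d=3$ in Definition~\ref{def:fowler}. Each $x_i$ is adjacent to $u_i$ and to the two vertices $w_j$ with $j\neq i$, so it has degree $3$. Each $w_i$ is adjacent to $v$ and to the two vertices $x_j$ with $j\neq i$, so it has degree $3$. The vertex $v$ is adjacent to $w_1,w_2,w_3$, and every $u_i$ keeps degree $3$, since it loses $v$ and gains $x_i$. Simplicity is clear.

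\emph{Structure of the gadget.} The subgraph $Q$ induced on $\{v,w_1,w_2,w_3,x_1,x_2,x_3\}$ is the cube $Q_3$ with one vertex deleted. The deleted vertex is antipodal to $v$, and its three neighbours are exactly $x_1,x_2,x_3$. Hence $F(G,v)$ is obtained from $G$ by replacing $v$ with $Q$, where the edge $vu_i$ becomes the edge $x_iu_i$. Equivalently, $F(G,v)$ is obtained from the disjoint union of $G$ and $Q_3$ by deleting $v$ and the vertex antipodal to $v$, and joining the freed half-edges in a prescribed bijection. This is the familiar ``$3$-sum'' of two cubic polyhedra across a vertex.

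\emph{Planarity.} Fix a plane embedding of $G$ and a small disk $D$ around $v$ that meets only the three edges at $v$, in some cyclic order $u_1,u_2,u_3$. The graph $Q$ has a plane embedding in which the outer face is the hexagon $x_1w_3x_2w_1x_3w_2$, with $v$ inside. So $x_1,x_2,x_3$ appear on the outer boundary in a cyclic order. Since any two cyclic orders of three objects differ at most by a reflection, we can reflect if necessary so that this order agrees with the order of $u_1,u_2,u_3$ around $v$. Placing this copy of $Q$ in $D$ and routing $x_iu_i$ along the old edge $vu_i$ gives a plane embedding of $F(G,v)$.

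\emph{$3$-connectivity.} This is the step I expect to need the most care. I would prove it by Menger-type path arguments. Suppose $S$ is a vertex set with $|S|\le 2$; we show $F(G,v)-S$ is connected.
\begin{enumerate}[label=(\roman*)]
\item The gadget $Q$ is itself $2$-connected. Moreover, for any $s\in Q$, the graph $Q-s$ is connected and still contains at least two of the $x_i$.
\item The graph $G$ is $3$-connected, so $G-v$ is $2$-connected and $G-v-s$ is connected for any single vertex $s$.
\end{enumerate}
Then split into cases according to $|S\cap V(Q)|$.
\begin{itemize}
\item If $S\subseteq V(G)\setminus\{v\}$: the gadget $Q$ stays connected. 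Every remaining component of $G-v-S$ contains some $u_i\notin S$, because otherwise $S$ would separate that component in $G$. Each such $u_i$ attaches to $Q$ through $x_i$, so everything is connected.
\item If $S$ has one vertex in $Q$ and one, $s$, in $G-v$: $G-v-s$ is connected, and $Q-S$ is connected and contains at least two of the $x_i$. At least one of the corresponding $u_i$ differs from $s$, and it links the two parts.
\item If $S\subseteq V(Q)$: $G-v$ is connected, and one checks that every vertex of $Q-S$ reaches some surviving $x_i$. This is a finite check on a $7$-vertex graph.
\end{itemize}
It is exactly this last finite check, together with the observation that removing at most two vertices never isolates a gadget vertex from all the $x_i$, that I expect to be the only real obstacle.
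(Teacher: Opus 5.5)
Your proof is correct. The paper gives no proof of its own for this lemma: it quotes \cite[Proposition 6]{Fowler2021} and only records the description of $F(G,v)$ as $G \sqcup Q_3$ with $v$ and one cube vertex deleted and three connecting edges added. That description is exactly your structural observation, so your argument is a self-contained verification built on the same picture.

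Your individual steps all check out:
\begin{itemize}
\item the degree count;
\item the embedding of $Q_3$ minus a vertex with outer hexagon $x_1w_3x_2w_1x_3w_2$, reflected if necessary to match the rotation at $v$;
\item the case split for a set $S$ with $|S|\le 2$. In the first case, a component of $G-v-S$ containing no $u_i$ would be a component of $G-S$ missing $v$, contradicting $3$-connectivity of $G$.
\end{itemize}

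The finite check you single out as the main obstacle is not needed. The construction is symmetric in $G$ and $Q_3$, so your first-case argument applies verbatim on the cube side. Write $v'$ for the deleted cube vertex, whose neighbours are $x_1,x_2,x_3$. If $S\subseteq V(Q)$, then $S$ avoids $v'$, so $Q_3-S$ is connected because $Q_3$ is $3$-connected. Hence every vertex of $Q-S$ has a path in $Q_3-S$ to $v'$. The portion of that path before its first visit to $v'$ lies in $Q-S$ and ends at a surviving $x_i$. Its neighbour $u_i$ lies in the connected graph $G-v$, which finishes the case. This is just the standard fact that splicing two $3$-connected graphs at vertices of degree $3$ preserves $3$-connectivity.
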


A convenient way for visualising Lemma~\ref{lem:fowlerPolyhedral} is the
following description of $F(G, v)$. Let the neighbours of $v$ be denoted $u_1,
u_2$ and $u_3$. For a cubic graph $G$, take the disjoint union of $G$ and the
cube graph $Q_3$ with vertices labelled as in Figure~\ref{fig:cubeFowler}(a).
Then $F(G, v)$ is obtained from $G \sqcup Q_3$ by deleting vertices $v$ and $u$
and adding edges $w_1u_1, w_2u_2$ and $w_3u_3$; see
Figure~\ref{fig:cubeFowler}(b).

\begin{figure}[!ht]
\centering
\subcaptionbox{$G \sqcup Q_3$} { 
\input{img/cube_fowler_panela.tikz}}
\qquad\qquad\qquad
\subcaptionbox{$F(G, v)$} { 
\input{img/cube_fowler_panelb.tikz}}
\caption{A visualisation of $F(G, v)$ for a cubic graph $G$.}
\label{fig:cubeFowler}
\end{figure}

\begin{theorem}
\label{thm:polyhedral}
For every $k \geq 2$ there exist infinitely many cubic polyhedral nut graphs of
bipartite edge frustration $k$.
\end{theorem}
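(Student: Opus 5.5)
The strategy is to combine two facts already established: the Fowler construction $F(G,v)$ preserves nuttiness~\cite{GPS}, and it preserves both cubicity and polyhedrality (Lemma~\ref{lem:fowlerPolyhedral}). It also preserves bipartite edge frustration (Lemma~\ref{lem:fowlerFrustration}). Each application adds $2d = 6$ vertices. So from a single cubic polyhedral nut graph $G$ with $\varphi(G) = k$, the iterates $G, F(G,v), F(F(G,v),v'), \ldots$ form an infinite sequence of pairwise non-isomorphic cubic polyhedral nut graphs, all with frustration $k$, since their orders strictly increase. The theorem therefore reduces to exhibiting, for each $k \geq 2$, \emph{one} cubic polyhedral nut graph with $\varphi = k$.

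For small $k$ (say $2 \leq k \leq 9$), Figure~\ref{fig:nutsSmallCubicPolyhedra} and the Frucht graph supply seeds. Their $\varphi$ values can be certified with Lemma~\ref{lem:hadlock}: in the dual (a triangulation), the odd-degree vertices are the odd faces, and one finds a minimum-length pairing by edge-disjoint paths.

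For general $k$ we need a construction that raises frustration, since coalescence destroys $3$-connectivity. The plan is to build a family directly. Take a family of cubic polyhedra whose odd faces are controlled, for instance prism-like or barrel-like graphs with $2k$ odd faces (pentagons or triangles) placed far apart in the dual. Lemma~\ref{lem:hadlock} then gives $\varphi$ equal to the minimum total length of a pairing of these odd dual vertices. Placing the odd faces in adjacent pairs makes each pair cost one dual edge, so $\varphi = k$ whenever the pairs are sufficiently separated that no cheaper pairing exists. Nuttiness would be verified by writing down a full kernel vector via local conditions along the repeated structure and showing nullity one, perhaps by a transfer-matrix argument along the periodic part.

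Alternatively, and preferably, I would try a local gadget operation that keeps the graph cubic polyhedral, adds a fixed number of odd faces to the dual, and provably preserves nuttiness. This would be a $3$-connected analogue of the prism splice: replace an edge or a vertex by a small planar patch containing two adjacent odd faces. Frustration increases by exactly one because of the Hadlock pairing argument. I would check that the resulting kernel vector is full and that nullity stays one, in the manner of Lemmas~\ref{lem:batEigenvectors} and~\ref{lem:batLemma}.

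The main obstacle is this step: finding an operation, or an explicit infinite family, that simultaneously keeps $3$-connectivity and planarity, preserves nullity one with a full kernel vector, and changes $\varphi$ by a controlled amount. The Hadlock computation is routine once the odd faces are isolated. Nullity control is the delicate part, and I would first try a patch for which the kernel entries on the patch are forced by local conditions, exactly as in the prism splice argument.
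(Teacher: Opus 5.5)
\paragraph{What is correct.} Your reduction is right, and it is exactly how the paper finishes its proof. Since $F(G,v)$ preserves nuttiness~\cite{GPS}, cubic polyhedrality (Lemma~\ref{lem:fowlerPolyhedral}) and $\varphi$ (Lemma~\ref{lem:fowlerFrustration}), and raises the order by $6$, one example per $k$ suffices.

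Your use of Lemma~\ref{lem:hadlock} is also fine, and it needs no separation hypothesis. If the $2k$ odd faces can be perfectly matched by dual adjacency, then $\varphi = k$ exactly, because each of the $k$ paths in that lemma is non-trivial.

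\paragraph{The gap.} The content of the theorem is one cubic polyhedral nut graph with $\varphi = k$ for \emph{every} $k \geq 2$, and you never supply it. Neither route is carried out:
\begin{enumerate}[label=(\roman*)]
\item no barrel family is specified, and no transfer-matrix computation is given;
\item no gadget is named.
\end{enumerate}
You yourself flag nullity control as unresolved.

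The prism-splice template you want to imitate also does not transfer. First, $\mathrm{PS}(G,uv)$ itself creates a $2$-edge cut, so it is useless here. Second, its nullity argument (Lemma~\ref{lem:batLemma}) works because the local conditions on the patch force $\y(p_6)=\y(u)$ and $\y(p_4)=\y(v)$. The restriction of any kernel vector is therefore a kernel vector of the parent. For the cubic $3$-connected gadgets the paper actually uses, this fails. After the gadget entries are eliminated, the restriction $\widehat{\x}$ only satisfies $\mathbf{A}(G_i)\widehat{\x} = c\,(\beta-\alpha)\z_0$, where $\z_0 \neq \mathbf{0}$ is supported on the attachment path. So an extra kernel vector cannot be excluded by local reasoning.

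\paragraph{How the paper closes it.} The paper works with $P_4$-rooted graphs and two $6$-vertex extensions $\mathcal{D}$ and $\mathcal{M}$. It applies them alternately from a seed $G_0$ with $\varphi(G_0)=4$ (Definition~\ref{def:manDogFamily}); each step adds two odd-degree vertices to the dual.
\begin{enumerate}[label=(\arabic*)]
\item An involution $\tau_i$ reversing $P_i$ kills antisymmetric kernel vectors by induction (Lemma~\ref{lem:symmetricKernel}).
\item For symmetric kernel vectors, the defect equation is handled by a global invariant $\rho(G_i,P_i)=\y_0(v_0)-\y_0(v_1)$, where $\mathbf{A}(G_i)\y_0=\z_0$. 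This is solvable because $\z_0 \perp \s$ (Lemma~\ref{lem:imperfectSolution}).
\item Comparing the two expressions for $\widehat{\x}(v_0)-\widehat{\x}(v_1)$ forces $\alpha=\beta$, unless $\rho=-1$ for $\mathcal{D}$ or $\rho=-4$ for $\mathcal{M}$ (Lemmas~\ref{lem:symmetricKernelDog} and~\ref{lem:symmetricKernelMan}).
\item The recursions $\rho\mapsto -1$ and $\rho \mapsto \rho/(\rho+4)$ give $\tfrac13, \tfrac1{13}, -1, -\tfrac13, -1, -\tfrac13, \ldots$, so the forbidden values are never reached.
\item The kernel vector extends by entries $1,1,1,1,-2,-2$, so it stays full, and $\varphi(G_i)=i+4$. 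The cases $k=2,3$ come from Table~\ref{tbl:3}.
\end{enumerate}
Your plan is missing an invariant of this kind, propagated through the recursion. Without it, the proposal establishes only the easy reduction.
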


\noindent We will provide a constructive proof. The essential step is proving
that the constructed graphs have nullity $1$, and this is done in full detail.
The remaining steps of establishing planarity, $3$-connectivity and the values
of the bipartite edge frustration are presented more compactly as they rely on
the standard machinery. Before the proof, we will define two expansions and
prove some lemmas.

\begin{figure}[!htbp]
\centering
\subcaptionbox{$(G, P)$} { \input{img/path4.tikz} }
\subcaptionbox{$\mathcal{D}(G, P)$} { \input{img/path4dog.tikz} }
\subcaptionbox{$\mathcal{M}(G, P)$} { \input{img/path4man.tikz} }
\caption{The two extensions used to construct an infinite family of cubic
polyhedral nut graphs for the proof of Theorem~\ref{thm:polyhedral}.}
\end{figure}

\begin{definition}
Let $(G, P)$ be a pair where $G$ is a graph and $P \subseteq G$ is a subgraph
that is isomorphic to $P_4$. Such a pair is called a \emph{$P_4$-rooted graph}.
Suppose that $V(G) = \{v_0, v_1, \ldots, v_{n - 1}\}$ such that $P = v_0 v_1 v_2
v_3$.

Define $\mathcal{D}(G, P)$ to be the pair $(G', P')$, such that $V(G') = V(G)
\cup \{ x_0, x_1, \ldots, x_5 \}$, where $x_0, \ldots, x_5$ are new vertices,
and
\[
E(G') = (E(G) \setminus \{ v_0v_1, v_2v_3 \})
\cup \{ v_0x_0, x_0x_1, x_1v_1, v_2x_2, x_2x_3, x_3v_3,
x_0x_4, x_1x_4, x_2x_5, x_3x_5, x_4x_5 \},
\]
as shown in Figure~16(b), and $P' = x_1 v_1 v_2 x_2$.

Define $\mathcal{M}(G, P)$ to be the pair $(G'', P'')$, such that $V(G'') = V(G)
\cup \{ x_0, x_1, \ldots, x_5 \}$ and
\begin{align*}
E(G'') & = (E(G) \setminus \{ v_0v_1, v_1v_2, v_2v_3 \})  \\ 
 & \qquad {} \cup \{ v_0x_0, x_0v_1, v_1x_1, x_1x_2, x_2v_2, v_2x_3, x_3v_3,
   x_0x_5, x_1x_4,x_2x_4, x_3x_5, x_4x_5 \},
\end{align*}
as shown in Figure~16(c), and $P'' = v_1 x_1 x_2 v_2$.
\end{definition}

\begin{figure}[!htbp]
\centering
\subcaptionbox{$G_0$}{ 
\input{img/polyhedral_seed_panela.tikz}
 }
 \hfill
\subcaptionbox{$G_0$}{
\input{img/polyhedral_seed_panelb.tikz}
 }
 
 \vspace{0.5\baselineskip}
 \subcaptionbox{$G_1$}{
\input{img/polyhedral_seed_panelc.tikz}
}
\hfill
\subcaptionbox{$G_2$}{
\input{img/polyhedral_seed_paneld.tikz}
}
\caption{The first three members of the family used in the proof of
Theorem~\ref{thm:polyhedral}. The automorphism $\tau_i$ corresponds to
reflection in the dotted red line.}
\label{fig:polyhedralSeed}
\end{figure}

\begin{definition}
\label{def:manDogFamily}
Let $(G_0, P_0)$ be the $P_4$-rooted graph in
Figure~\ref{fig:polyhedralSeed}(a), with $P_0$ the path coloured green. Let
$(G_{i + 1}, P_{i + 1}) = \mathcal{M}(G_i, P_i)$ for even $i$ and $(G_{i + 1},
P_{i + 1}) = \mathcal{D}(G_i, P_i)$ for odd $i$.
\end{definition}

\begin{lemma}
For every $i \geq 0$, there exists an involution $\tau_i \in \Aut (G_i)$ that
reverses $P_i$. Moreover, the middle two vertices of the path $P_i$ have a
common neighbour that is fixed by $\tau_i$.
\end{lemma}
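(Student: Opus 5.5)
We argue by induction on $i$, proving a slightly stronger invariant. Write $P_i = a_0 a_1 a_2 a_3$. The claim for $G_i$ is that there is an involution $\tau_i \in \Aut(G_i)$ with $\tau_i(a_j) = a_{3-j}$ for $j = 0,1,2,3$, and that $a_1, a_2$ have a common neighbour $c_i$ with $\tau_i(c_i) = c_i$. For the base case $(G_0, P_0)$ we read both facts off Figure~\ref{fig:polyhedralSeed}(a): the reflection in the dotted line is an automorphism, it swaps the ends and the middle vertices of the green path, and it fixes the apex adjacent to both middle vertices. This is a finite check on a small explicit graph.

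For the inductive step, suppose $\tau = \tau_i$ and $c = c_i$ exist for $(G,P) = (G_i, P_i)$, with $P = v_0v_1v_2v_3$. Then $\tau$ swaps the edges $v_0v_1$ and $v_2v_3$ and fixes the edge $v_1v_2$. We extend $\tau$ to the new vertices $x_0,\dots,x_5$ as follows.

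In the $\mathcal{D}$ case, set $x_0 \leftrightarrow x_3$, $x_1 \leftrightarrow x_2$ and $x_4 \leftrightarrow x_5$. The removed edge set $\{v_0v_1, v_2v_3\}$ is $\tau$-invariant. We check that the added edge list maps to itself:
\begin{itemize}
\item $v_0x_0 \mapsto v_3x_3$, $x_0x_1 \mapsto x_3x_2$, $x_1v_1 \mapsto x_2v_2$;
\item $x_0x_4 \mapsto x_3x_5$, $x_1x_4 \mapsto x_2x_5$, $x_4x_5 \mapsto x_5x_4$.
\end{itemize}
So the extended map is an involutive automorphism of $G'$. It reverses $P' = x_1v_1v_2x_2$. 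The middle vertices $v_1, v_2$ keep their common neighbour $c$, because the edges $v_1c$ and $v_2c$ are untouched: $c \notin \{v_0, v_3\}$, since $G$ is simple and cubic and the relevant edges are distinct. This is the one point to verify with care.

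In the $\mathcal{M}$ case, use the same assignment $x_0 \leftrightarrow x_3$, $x_1 \leftrightarrow x_2$, $x_4 \leftrightarrow x_5$. The removed set $\{v_0v_1, v_1v_2, v_2v_3\}$ is invariant, and we again check the added edges:
\begin{itemize}
\item $v_0x_0 \mapsto v_3x_3$, $x_0v_1 \mapsto x_3v_2$, $v_1x_1 \mapsto v_2x_2$, $x_1x_2$ is fixed;
\item $x_0x_5 \mapsto x_3x_4$.
\end{itemize}
The last entry needs attention. With $x_4 \leftrightarrow x_5$, the image $x_3x_4$ is not in the list; the list contains $x_3x_5$ and $x_1x_4$, $x_2x_4$. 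This points instead to fixing $x_4$ and $x_5$ individually, with $x_0 \leftrightarrow x_3$ and $x_1 \leftrightarrow x_2$. Then
\begin{itemize}
\item $x_0x_5 \mapsto x_3x_5$ and $x_1x_4 \mapsto x_2x_4$;
\item $x_4x_5$ is fixed.
\end{itemize}
The edge set is preserved, so we use this choice. The new path $P'' = v_1x_1x_2v_2$ is reversed by the extended map. Its middle vertices $x_1, x_2$ have common neighbour $x_4$, which is now fixed. For $\mathcal{D}$ we keep the swap $x_4 \leftrightarrow x_5$, which is forced by $x_0x_4 \mapsto x_3x_5$.

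In both cases the extended map has order $2$, because it is an involution on $V(G)$ and on the new vertices separately. This completes the induction.

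The only real obstacle is the bookkeeping: getting the action on $x_4, x_5$ right in each expansion, since it differs between $\mathcal{D}$ and $\mathcal{M}$. There is also the mild point that, in the $\mathcal{D}$ step, the fixed common neighbour $c$ of the old middle vertices is unaffected by the deleted edges. We must also confirm that in $G_0$ the fixed common neighbour is not an end of $P_0$, so that this hypothesis holds throughout. Nothing deeper than checking edge lists is needed.
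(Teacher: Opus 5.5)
Your proposal is correct and follows the paper's proof: induction from the seed $G_0$, extending $\tau_i$ by $x_0\leftrightarrow x_3$, $x_1\leftrightarrow x_2$, $x_4\leftrightarrow x_5$ in the $\mathcal{D}$ step, and by $x_0\leftrightarrow x_3$, $x_1\leftrightarrow x_2$ with $x_4,x_5$ fixed in the $\mathcal{M}$ step (the choice you settled on after your first attempt). One small fix: simplicity and cubicity do not rule out $c\in\{v_0,v_3\}$, since a triangle $v_0v_1v_2$ is allowed by those; instead, $c$ is fixed by $\tau$ while $\tau$ swaps $v_0\neq v_3$, so $c\notin\{v_0,v_3\}$ and the edges $v_1c$, $v_2c$ survive the $\mathcal{D}$ step automatically, which means no extra hypothesis has to be carried through the induction (the paper passes over this point silently).
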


\begin{proof}
The statement clearly holds for $G_0$ and $P_0$; see Figure 17(a). If we apply
the $\mathcal{D}$-extension to $(G_i, P_i)$, the involution $\tau_i$ extends to
$\tau_{i + 1}$ by $x_4 \leftrightarrow x_5$, $x_0 \leftrightarrow x_3$ and $x_1
\leftrightarrow x_2$; see Figure 16(b). The middle two vertices of $P_{i + 1}$
are the same as those of $P_i$, therefore the fixed common neighbour remains the
same. Likewise, if we apply the $\mathcal{M}$-extension, the involution $\tau_i$
extends to $\tau_{i + 1}$ by $x_0 \leftrightarrow x_3$, $x_1 \leftrightarrow
x_2$ while fixing $x_4$ and $x_5$; see Figure 16(c). The middle two vertices of
$P_{i + 1}$ are $x_1$ and $x_2$, with common neighbour $x_4$.
\end{proof}

\noindent For an automorphism $\alpha \in \Aut(G)$ and a vector
$\mathbf{x}\colon V(G)\to\mathbb{R}$, we write
$(\alpha\mathbf{x})(v)=\mathbf{x}(\alpha(v))$. A vector $\x \colon V(G) \to
\mathbb{R}$ is symmetric with respect to $\tau$ if $\x(u) = (\tau \x)(u)$ for
every $u \in V(G)$. Likewise, $\x$ is antisymmetric if $\x(u) = - (\tau \x)(u)$
for every $u \in V(G)$. We will use this fact from linear algebra.

\begin{proposition}
\label{prop:kernelDecompose}
Let $G$ be a graph with an involution $\tau \in \Aut(G)$. Every $\x \in \ker
(G)$ can be expressed uniquely as $\x = \x^+ + \x^-$, where $\x^+, \x^- \in \ker
(G)$ are such that $\x^+$ is symmetric and $\x^-$ is antisymmetric. Moreover,
$\ker G = N^+(G) \oplus N^-(G)$, where $N^+(G)$ and $N^-(G)$ are the symmetric
and antisymmetric parts of $\ker G$, respectively.
\end{proposition}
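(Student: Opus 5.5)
The plan is to use the standard averaging projection with respect to $\tau$. The two ingredients are that $\tau$ acts as a permutation matrix commuting with the adjacency matrix, and that $\tau$ is an involution.

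First I record the matrix formulation. Let $\mathbf{T}$ be the permutation matrix with $(\mathbf{T}\x)(v) = \x(\tau(v))$, i.e.\ $\mathbf{T}\x = \tau\x$. Since $\tau \in \Aut(G)$, we have $\mathbf{T}^\intercal \mathbf{A}(G) \mathbf{T} = \mathbf{A}(G)$, and as $\mathbf{T}$ is orthogonal this gives $\mathbf{A}(G)\mathbf{T} = \mathbf{T}\mathbf{A}(G)$. It follows that $\ker G$ is invariant under $\mathbf{T}$: if $\mathbf{A}(G)\x = \mathbf{0}$, then $\mathbf{A}(G)\mathbf{T}\x = \mathbf{T}\mathbf{A}(G)\x = \mathbf{0}$. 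Because $\tau^2 = \mathrm{id}$, we also have $\mathbf{T}^2 = \mathbf{I}$.

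Next, for existence, given $\x \in \ker G$ I define $\x^+ = \tfrac12(\x + \tau\x)$ and $\x^- = \tfrac12(\x - \tau\x)$. Both lie in $\ker G$, since $\ker G$ is a subspace invariant under $\mathbf{T}$, and clearly $\x = \x^+ + \x^-$. Using $\tau(\tau\x) = \x$, one checks directly that $\tau\x^+ = \x^+$ and $\tau\x^- = -\x^-$. So $\x^+$ is symmetric and $\x^-$ is antisymmetric.

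For uniqueness, suppose $\x = \y^+ + \y^-$ is another such decomposition. Then $\z = \x^+ - \y^+ = \y^- - \x^-$ is both symmetric and antisymmetric. Hence $\z = \tau\z = -\z$, so $\z = \mathbf{0}$.

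Finally, set $N^+(G) = \{\x \in \ker G : \tau\x = \x\}$ and $N^-(G) = \{\x \in \ker G : \tau\x = -\x\}$. These are subspaces, being the eigenspaces of the linear map $\mathbf{T}$ restricted to $\ker G$. The existence argument shows that $\ker G = N^+(G) + N^-(G)$, and the uniqueness argument shows that $N^+(G) \cap N^-(G) = \{\mathbf{0}\}$, so the sum is direct.

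There is no real obstacle here. The only point requiring care is the commutation $\mathbf{A}\mathbf{T} = \mathbf{T}\mathbf{A}$, which is exactly the statement that $\tau$ preserves adjacency. It can also be verified through the local condition: $\sum_{v \in G(u)} \x(\tau(v)) = \sum_{v' \in G(\tau(u))} \x(v') = 0$, because $\tau$ maps $G(u)$ bijectively onto $G(\tau(u))$.
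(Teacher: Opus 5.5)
Your proof is correct. The paper does not prove this proposition and simply cites it as a standard fact from linear algebra; your symmetrisation $\x^{\pm} = \tfrac12(\x \pm \tau\x)$, using that $\mathbf{A}(G)$ commutes with the permutation matrix of $\tau$ and that $\tau^2 = \mathrm{id}$, is exactly that standard argument (the paper uses the same averaging later, in the proof of Lemma~\ref{lem:imperfectSolution}).
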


Note that $\x^-(u) = 0$ if $u$ is fixed by $\tau$. The spaces $N^+(G)$ and
$N^-(G)$ depend on $\tau$, so strictly speaking, we should write $N^+(G, \tau)$
and $N^-(G, \tau)$, respectively. As the involution $\tau$ is clear from the
context, we omit it.

\begin{lemma}
\label{lem:symmetricKernel}
For every $i \geq 0$, $N^-(G_i)$ is trivial and $\ker G_i = N^+(G_i)$.
\end{lemma}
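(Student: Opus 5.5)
We argue by induction on $i$, tracking an antisymmetric kernel vector (with respect to $\tau_i$) and showing it must vanish. For the base case $i=0$ we check directly from Figure~\ref{fig:polyhedralSeed}(a). One computes the kernel of the seed $G_0$ by hand, or argues via the local conditions restricted to antisymmetric vectors. Such a vector vanishes on every $\tau_0$-fixed vertex, and in particular on the common neighbour $c$ of the middle two vertices of $P_0$. So it suffices to solve the quotient system on one half of the graph and check that it forces $\x^- = 0$.

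For the inductive step, let $\z \in N^-(G_{i+1})$; here $\tau_{i+1}$ extends $\tau_i$ as in the preceding lemma. The plan is to show that $\z$ restricts, after a correction on the new vertices, to an element of $N^-(G_i)$, which is trivial by induction. We then deduce $\z=0$. The steps, in order, are as follows.

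\begin{enumerate}[label=(\roman*)]
\item Write the local conditions at the six new vertices $x_0,\ldots,x_5$, using antisymmetry to halve the unknowns. For $\mathcal{D}$ we have $\z(x_5)=-\z(x_4)$, $\z(x_3)=-\z(x_0)$ and $\z(x_2)=-\z(x_1)$. For $\mathcal{M}$ we have $\z(x_4)=\z(x_5)=0$, $\z(x_3)=-\z(x_0)$ and $\z(x_2)=-\z(x_1)$.
\item Solve these conditions to express the $x$-values in terms of $\z(v_0),\ldots,\z(v_3)$. For $\mathcal{D}$, the conditions at $x_4$ and $x_5$ give $\z(x_0)+\z(x_1)+\z(x_5)=0$ and its mirror. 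Combined with the conditions at $x_0$ and $x_1$, this should give $\z(x_0)=\z(v_1)$ and $\z(x_1)=\z(v_0)$. It is an effective ``transfer'' across the inserted gadget, analogous to Lemma~\ref{lem:batEigenvectors}.
\item Show that the restriction of $\z$ to $V(G)$ satisfies the local conditions of $G_i$ at every old vertex. This holds automatically away from $P_i$. At $v_0,\ldots,v_3$, the contribution of the deleted edges is replaced exactly by the contribution of the new neighbours $x_j$, thanks to step (ii).
\item Antisymmetry of the restriction with respect to $\tau_i$ is inherited. Hence the restriction lies in $N^-(G_i)=\{0\}$, so $\z$ vanishes on $V(G_i)$. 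By step (ii) it then also vanishes on the new vertices.
\end{enumerate}

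The main obstacle is the $\mathcal{M}$ case. There the middle edge $v_1v_2$ is deleted, so the transfer must reproduce the term $\z(v_2)$ at $v_1$; that is, we need $\z(x_0)+\z(x_1)=\z(v_0)+\z(v_2)$ (and the mirror identity at $v_2$). With $\z(x_4)=0$, the condition at $x_4$ gives $\z(x_1)+\z(x_2)=0$, which is just antisymmetry again, so we lose an equation. To recover it, I would first try to use antisymmetry on $G_i$ itself. Since $\tau_i$ swaps $v_1\leftrightarrow v_2$ and fixes their common neighbour $c$, we have $\z(v_2)=-\z(v_1)$ and $\z(c)=0$. Together with the remaining conditions at $x_0$, $x_1$ and $x_5$, this should close the system. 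This is exactly why the preceding lemma records the fixed common neighbour.

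Finally, $\ker G_i = N^+(G_i)$ follows from Proposition~\ref{prop:kernelDecompose}.
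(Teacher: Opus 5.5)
There is a genuine gap in steps (ii)--(iii). The base case, the inheritance of antisymmetry, and the final appeal to Proposition~\ref{prop:kernelDecompose} are fine. The problem is that the ``transfer'' identities are false, so the restriction $\widehat{\z}$ of $\z$ to $V(G_i)$ is \emph{not} in $\ker G_i$.

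For $\mathcal{D}$, put $a=\z(v_0)$ and $b=\z(v_1)$. With antisymmetry, the conditions at $x_0,x_1,x_4$ are $a+\z(x_1)+\z(x_4)=0$, $\z(x_0)+b+\z(x_4)=0$ and $\z(x_0)+\z(x_1)-\z(x_4)=0$. Their unique solution is $\z(x_0)=(a-2b)/3$, $\z(x_1)=(b-2a)/3$, not $\z(x_0)=b$, $\z(x_1)=a$.

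Now add the condition at $v_1$, using $\z(w)=0$ for the fixed common neighbour $w$ and $\z(v_2)=-b$. This gives $\z(x_1)=b$, hence $a=-b$. Writing $\alpha=\z(v_1)$, the local conditions determine only the one-parameter family $\z(v_0)=\z(x_0)=-\alpha$, $\z(x_1)=\alpha$, $\z(x_4)=\z(x_5)=0$. For this family, $\mathbf{A}(G_i)\widehat{\z}$ equals $2\alpha$ at $v_0,v_2$, $-2\alpha$ at $v_1,v_3$, and $0$ elsewhere.

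The $\mathcal{M}$ case behaves the same way. The conditions at $x_0$, $x_1$ and $v_1$ give $\z(v_0)=\z(x_0)=-\alpha$ and $\z(x_1)=\alpha$. Your required identity $\z(x_0)+\z(x_1)=\z(v_0)+\z(v_2)$ then reads $0=-2\alpha$.

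So in both cases step (iii) is equivalent to $\alpha=0$, which is the conclusion itself. Using $\z(w)=0$ and $\z(v_2)=-\z(v_1)$ does not ``close the system''. The family above satisfies every local condition at the new vertices and at $v_1,v_2$ for every $\alpha$. Hence no argument local to the gadget can force $\alpha=0$; information about the rest of $G_i$ is needed.

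The missing idea is to absorb this defect with a correction on the \emph{old} vertices $v_1,v_2$, and only then invoke the induction hypothesis. Set $\y(v_1)=-2\alpha$, $\y(v_2)=2\alpha$ and $\y=\mathbf{0}$ elsewhere. Since $G_i(v_1)=\{v_0,v_2,w\}$ and $G_i(v_2)=\{v_1,v_3,w\}$, one gets $\mathbf{A}(G_i)\y=-\mathbf{A}(G_i)\widehat{\z}$, with the two contributions cancelling at $w$. Also $\y$ is antisymmetric, so $\widehat{\z}+\y\in N^-(G_i)=\{\mathbf{0}\}$. Hence $\widehat{\z}=-\y$ vanishes at $v_0$, whereas $\widehat{\z}(v_0)=-\alpha$. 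Thus $\alpha=0$, and the new-vertex entries, all expressed in terms of $\alpha$, vanish too, so $\z=\mathbf{0}$. This is exactly the paper's argument. The same imperfect transfer across the gadgets is why the quantity $\rho$ is needed in Lemmas~\ref{lem:symmetricKernelDog} and~\ref{lem:symmetricKernelMan}.
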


\begin{proof}
The proof goes by induction. For the seed graph $G_0$ (in Figure 17) this can be
checked by direct computation. Now suppose that $(G_{i + 1}, P_{i + 1}) =
\mathcal{D}(G_i, P_i)$. Let $\x \in N^-(G_{i + 1})$. Let $w$ denote the common
neighbour of $v_1$ and $v_2$ in $G_{i+1}$. As $w$ is fixed by $\tau_{i + 1}$,
$\x(w) = 0$. Denote $\x(v_1)$ by $\alpha$. By using the local condition at
vertices $x_0, x_1, x_4$ and $v_1$, we obtain $\x(v_3) = \x(x_3) = \x(x_1) =
\alpha$ and $\x(v_0) = \x(x_0) = \x(x_2) = \x(v_2) = -\alpha$ as well as
$\x(x_4) = \x(x_5) = 0$. Let $\widehat{\x}$ be $\x$ restricted to graph $G_i$.
Observe that
\[
(\mathbf{A}(G_i) \widehat{\x}) (u) = \begin{cases}
2 \alpha, & \text{if } u = v_0 \text{ or } u = v_2; \\
-2 \alpha, & \text{if } u = v_1 \text{ or } u = v_3; \\
 0, & \text{otherwise}.
\end{cases}
\]
Define $\y \colon V(G_i) \to \mathbb{R}$ by $\y(v_1) = -2\alpha$, $\y(v_2) =
2\alpha$ and $\y(u) = 0$ for $u \in V(G_i) \setminus \{ v_1, v_2 \}$. It is easy
to check that $\mathbf{A}(G_i) (\widehat{\x} + \y) = \mathbf{0}$. Observe that
$\widehat{\x} + \y \in N^-(G_i)$ as $\x$ and $\y$ are both antisymmetric. By the
induction hypothesis, $N^-(G_i)$ is trivial, therefore $\widehat{\x} = -\y$.
From $\widehat{\x}(v_0) = \x(v_0) = - \alpha$ and $\y(v_0) = 0$ it follows that
$\alpha = 0$. This implies that $\x = \mathbf{0}$, so $N^-(G_{i+1})$ is trivial.

The case $(G_{i + 1}, P_{i + 1}) = \mathcal{M}(G_i, P_i)$ goes by analogy.

From Proposition~\ref{prop:kernelDecompose} we now immediately obtain $\ker G_i
= N^+(G_i)$.
\end{proof}

\noindent By Lemma~\ref{lem:symmetricKernel}, every kernel eigenvector of $G_i$
for all $i \geq 0$ from now on can be assumed to be symmetric.
 
\begin{lemma}
\label{lem:imperfectSolution}
Suppose that $\eta(G_i) = 1$ and that $\ker G_i$ is spanned by a vector $\s$
such that $\s(v_0) = \s(v_1) = \s(v_2) = \s(v_3) = 1$. Let $\z_0 \colon V(G_i)
\to \mathbb{R}$ be defined as $\z_0(v_0) = \z_0(v_3) = 1$, $\z_0(v_1) =
\z_0(v_2) = -1$, and $\z_0(u) = 0$ for $u \in V(G_i) \setminus \{v_0, v_1, v_2,
v_3\}$. Then there exists a symmetric $\y_0$ such that $\mathbf{A}(G_i) \y_0 =
\z_0$. Let $\rho(G_i, P_i) \coloneqq \y_0(v_0) - \y_0(v_1)$. Then $\y(v_0) -
\y(v_1) = \rho(G_i, P_i)$ for all $\y$ that satisfy $\mathbf{A}(G_i) \y = \z_0$.
\end{lemma}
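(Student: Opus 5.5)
The plan rests on the Fredholm alternative for the symmetric matrix $\mathbf{A} \coloneqq \mathbf{A}(G_i)$. Since $\mathbf{A}$ is real symmetric, $\im \mathbf{A} = (\ker \mathbf{A})^\perp$. By hypothesis $\ker G_i$ is spanned by $\s$ with $\s(v_0)=\s(v_1)=\s(v_2)=\s(v_3)=1$. Hence
\[
\langle \z_0, \s \rangle = \s(v_0) - \s(v_1) - \s(v_2) + \s(v_3) = 1 - 1 - 1 + 1 = 0,
\]
so $\z_0 \perp \ker G_i$ and $\z_0 \in \im \mathbf{A}$. Therefore some $\y$ satisfies $\mathbf{A}\y = \z_0$.

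To obtain a \emph{symmetric} solution I would symmetrise. Let $\tau = \tau_i$ be the involution from the lemma above, which reverses $P_i$, so that $\tau(v_0)=v_3$ and $\tau(v_1)=v_2$. It follows that $\z_0$ is symmetric: $(\tau\z_0)(v_0) = \z_0(v_3) = 1 = \z_0(v_0)$, and similarly for the other vertices. Since $\tau$ is an automorphism, $\mathbf{A}$ commutes with the permutation action, i.e.\ $\mathbf{A}(\tau\y) = \tau(\mathbf{A}\y) = \tau\z_0 = \z_0$. Then $\y_0 \coloneqq \tfrac12(\y + \tau\y)$ is symmetric and satisfies $\mathbf{A}\y_0 = \z_0$. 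The only thing to check here is that the action $(\alpha\x)(v) = \x(\alpha(v))$ really commutes with $\mathbf{A}$; this is a one-line verification using that $\alpha$ preserves adjacency.

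For well-definedness of $\rho(G_i,P_i)$, suppose $\mathbf{A}\y = \z_0$ for an arbitrary $\y$ (not necessarily symmetric). Then $\y - \y_0 \in \ker G_i$, and since $\eta(G_i) = 1$ we get $\y - \y_0 = c\,\s$ for some scalar $c$. Consequently
\[
\y(v_0) - \y(v_1) = \y_0(v_0) - \y_0(v_1) + c\bigl(\s(v_0) - \s(v_1)\bigr) = \y_0(v_0) - \y_0(v_1),
\]
because $\s(v_0) = \s(v_1)$. This shows that $\rho(G_i,P_i)$ does not depend on the choice of solution, and in particular not on the choice of $\y_0$.

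There is no real obstacle in this argument. The key point is the orthogonality of $\z_0$ to the kernel; it is the only place where the normalisation $\s(v_j) = 1$ on the path is used, besides its use in the last step. Lemma~\ref{lem:symmetricKernel} is not strictly needed, although it is consistent with the hypothesis that $\s$ is symmetric.
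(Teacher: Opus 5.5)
Your proposal is correct and follows essentially the same route as the paper. Orthogonality of $\z_0$ to the one-dimensional kernel gives a solution via $\im \mathbf{A}(G_i) = (\ker \mathbf{A}(G_i))^\perp$, averaging with $\tau_i$ makes it symmetric, and any two solutions differ by a multiple of $\s$, which has equal entries at $v_0$ and $v_1$; you also check explicitly that $\z_0$ is $\tau_i$-symmetric and that $\tau_i$ commutes with $\mathbf{A}(G_i)$, which the paper leaves implicit.
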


\begin{proof}
It is easy to see that $\langle \s , \z_0 \rangle = 0$. Since $\eta(G_i) = 1$,
$\z_0$ is orthogonal to the entire $\ker \mathbf{A}(G_i)$. Recall that $\im
\mathbf{A}(G_i)$ is the orthogonal complement of $\ker \mathbf{A}(G_i)$. Thus,
there exists $\y_1$ such that $\mathbf{A}(G_i) \y_1 = \z_0$. It is easy to see
that $\y_0 = \frac{1}{2} \y_1 + \frac{1}{2} \tau_i \y_1$ is a symmetric vector
that satisfies $\mathbf{A}(G_i) \y_0 = \z_0$.

Now, let $\y$ be another vector that satisfies $\mathbf{A}(G_i) \y = \z_0$. Then
$\mathbf{A}(G_i)(\y - \y_0) = \mathbf{A}(G_i) \y - \mathbf{A}(G_i) \y_0 = 0$, so
$\y - \y_0 \in \ker \mathbf{A}(G_i)$. In other words, $\y - \y_0 = \mu \s$ for
some scalar $\mu$. In particular $\y(v_0) - \y_0(v_0) = \y(v_1) - \y_0(v_1) =
\mu$, and the statement follows.
\end{proof}

\begin{lemma}
\label{lem:symmetricKernelDog}
Let $(G_i,P_i)$ be as in Definition~\ref{def:manDogFamily}, and suppose that
$(G_{i+1},P_{i+1})=\mathcal{D}(G_i,P_i)$. Suppose that $\rho(G_i, P_i) \neq -1$,
that $\eta(G_i) = 1$ and that $\ker G_i$ is spanned by a vector $\s$ such that
$\s(v_0) = \s(v_1) = \s(v_2) = \s(v_3) = 1$. Then $\eta(G_{i+1}) = 1$ and $\s$
extends to $\widetilde{\s} \in \ker G_{i+1}$ by $\widetilde{\s}(x_0) =
\widetilde{\s}(x_1) = \widetilde{\s}(x_2) = \widetilde{\s}(x_3) = 1$ and
$\widetilde{\s}(x_4) = \widetilde{\s}(x_5) = - 2$. Moreover, $\rho(G_{i+1},
P_{i+1}) = -1$.
\end{lemma}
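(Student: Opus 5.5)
The plan has three parts: check that $\widetilde{\s}$ is a kernel vector, prove $\eta(G_{i+1}) \le 1$ using Lemma~\ref{lem:symmetricKernel} together with the hypothesis $\rho(G_i,P_i)\neq -1$, and finally compute $\rho(G_{i+1},P_{i+1})$ from an explicit solution. Throughout I use the labels of the definition of $\mathcal{D}$.

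\emph{Step 1: $\widetilde{\s} \in \ker G_{i+1}$.} I check local conditions.
\begin{itemize}
\item At $x_0$ the sum is $\s(v_0)+1-2=0$; at $x_1$ it is $1+\s(v_1)-2=0$.
\item At $x_4$ it is $1+1-2=0$; the conditions at $x_2,x_3,x_5$ are symmetric to these.
\item At $v_0$ the old neighbour $v_1$ (value $1$) is replaced by $x_0$ (value $1$), so the sum is unchanged and still zero. The same holds at $v_1,v_2,v_3$.
\item All other vertices keep their neighbourhoods.
\end{itemize}
Since $\widetilde{\s}\neq\mathbf 0$, this gives $\eta(G_{i+1})\ge 1$.

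\emph{Step 2: $\eta(G_{i+1})\le 1$.} This is the main step. By Lemma~\ref{lem:symmetricKernel} applied to $G_{i+1}$, any $\y\in\ker G_{i+1}$ is symmetric with respect to $\tau_{i+1}$. The involution $\tau_{i+1}$ swaps $x_0\leftrightarrow x_3$, $x_1\leftrightarrow x_2$, $x_4\leftrightarrow x_5$, and (since it extends $\tau_i$, which reverses $P_i$) also $v_0\leftrightarrow v_3$, $v_1\leftrightarrow v_2$. So I may write $a=\y(x_0)=\y(x_3)$, $b=\y(x_1)=\y(x_2)$ and $c=\y(x_4)=\y(x_5)$. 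The local conditions at the new vertices give:
\begin{itemize}
\item at $x_4$: $a+b+c=0$;
\item at $x_0$: $\y(v_0)+b+c=0$, hence $\y(v_0)=a$;
\item at $x_1$: $a+\y(v_1)+c=0$, hence $\y(v_1)=b$.
\end{itemize}
By symmetry $\y(v_3)=a$ and $\y(v_2)=b$. Let $\widehat{\y}$ be the restriction of $\y$ to $V(G_i)$. Comparing neighbourhoods in $G_i$ and $G_{i+1}$, I get
\[
(\mathbf A(G_i)\widehat{\y})(v_0)=\y(v_1)-\y(x_0)=b-a,\qquad (\mathbf A(G_i)\widehat{\y})(v_1)=\y(v_0)-\y(x_1)=a-b .
\]
Similarly the values at $v_2$ and $v_3$ are $a-b$ and $b-a$, and all other entries are zero. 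That is, $\mathbf A(G_i)\widehat{\y}=(b-a)\z_0$.

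Suppose $b\neq a$. Then $\widehat{\y}/(b-a)$ solves $\mathbf A(G_i)\y'=\z_0$. By Lemma~\ref{lem:imperfectSolution},
\[
\rho(G_i,P_i)=\frac{\widehat{\y}(v_0)-\widehat{\y}(v_1)}{b-a}=\frac{a-b}{b-a}=-1,
\]
which contradicts the hypothesis. Hence $a=b$, so $\widehat{\y}\in\ker G_i=\langle\s\rangle$, say $\widehat{\y}=\mu\s$. Then $a=\y(v_0)=\mu$, $b=\mu$ and $c=-2\mu$, so $\y=\mu\widetilde{\s}$. Therefore $\eta(G_{i+1})=1$.

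\emph{Step 3: $\rho(G_{i+1},P_{i+1})=-1$.} Now $P_{i+1}=x_1v_1v_2x_2$ plays the role of $v_0v_1v_2v_3$, and $\widetilde{\s}$ equals $1$ on these four vertices. So Lemma~\ref{lem:imperfectSolution} applies to $G_{i+1}$, and any single solution of $\mathbf A(G_{i+1})\y=\z_0'$ determines $\rho$. Here $\z_0'$ is $1$ on $x_1,x_2$, is $-1$ on $v_1,v_2$, and is zero elsewhere.

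I try $\y$ vanishing on $V(G_i)\cup\{x_0,x_3\}$, with $\y(x_1)=\y(x_2)=-1$ and $\y(x_4)=\y(x_5)=1$. The checks are:
\begin{itemize}
\item at $x_0$: $0+(-1)+1=0$;
\item at $x_1$: $0+0+1=1$;
\item at $x_4$: $0+(-1)+1=0$;
\item at $v_1$: the only non-zero neighbour is $x_1$, giving $-1$;
\item at $v_0$: the only new neighbour is $x_0$, giving $0$.
\end{itemize}
The remaining conditions follow by symmetry. Hence $\rho(G_{i+1},P_{i+1})=\y(x_1)-\y(v_1)=-1-0=-1$.

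I expect Step 2 to be the main obstacle. It requires tracking exactly which neighbourhoods change when passing from $G_i$ to $G_{i+1}$, so as to obtain $\mathbf A(G_i)\widehat{\y}=(b-a)\z_0$ with the correct sign.
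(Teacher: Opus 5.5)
Your proposal is correct and follows essentially the same route as the paper: symmetry from Lemma~\ref{lem:symmetricKernel}, then the local conditions at $x_0,x_1,x_4$ reduce $\mathbf A(G_i)\widehat{\y}$ to $(b-a)\z_0$, and Lemma~\ref{lem:imperfectSolution} together with $\rho(G_i,P_i)\neq-1$ forces $a=b$. The one difference is the last step: you verify an explicit solution supported on $\{x_1,x_2,x_4,x_5\}$, whereas the paper reads $\rho(G_{i+1},P_{i+1})$ off the local conditions at $x_1$ and $x_4$ of the symmetric solution; both are valid because $\rho$ does not depend on which solution is chosen.
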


\begin{proof}
Let $\x \in \ker G_{i+1}$. Let $w$ denote the common neighbour of $v_1$ and
$v_2$ in $G_i$. Denote $\x(v_0)$ by $\alpha$ and $\x(v_1)$ by $\beta$. By
Lemma~\ref{lem:symmetricKernel}, $\x$ is symmetric. Using the local conditions
at the vertices $x_0, x_1, x_4$ and $v_1$, we obtain $\x(v_3) = \x(x_0) =
\x(x_3) = \alpha$ and $\x(v_2) = \x(x_1) = \x(x_2) = \beta$ as well as $\x(x_4)
= \x(x_5) = -\alpha - \beta$ and $\x(w) = - 2\beta$. Let $\widehat{\x}$ be the
restriction of $\x$ to graph $G_i$. Observe that
\begin{equation}
\label{eq:restrictionDog}
(\mathbf{A}(G_i) \widehat{\x}) (u) = \begin{cases}
\beta - \alpha, & \text{if } u = v_0 \text{ or } u = v_3; \\
\alpha - \beta, & \text{if } u = v_1 \text{ or } u = v_2; \\
 0, & \text{otherwise}.
\end{cases}
\end{equation}
Note that the right-hand side of \eqref{eq:restrictionDog} is $(\beta -
\alpha)\z_0$, where $\z_0$ is as defined in Lemma~\ref{lem:imperfectSolution}.
From Lemma~\ref{lem:imperfectSolution} we obtain $\widehat{\x}(v_0) -
\widehat{\x}(v_1) = (\beta - \alpha)\rho(G_i, P_i)$. On the other hand,
$\widehat{\x}(v_0) - \widehat{\x}(v_1) = \x(v_0) - \x(v_1) = \alpha - \beta$.
Therefore, $(\beta - \alpha)(\rho(G_i, P_i) + 1) = 0$. By assumption $\rho(G_i,
P_i) \neq -1$, which implies that $\alpha = \beta$, which in turn implies
$\widehat{\x} \in \ker \mathbf{A}(G_i)$ and thus $\widehat{\x} = \beta \s$.
Thus, the entries $\x(x_0) = \x(x_1) = \x(x_2) = \x(x_3) = \beta$ and $\x(x_4) =
\x(x_5) = -2\beta$ are uniquely determined. It is easy to check that
$\widetilde{\s} \in \ker G_{i+1}$ and therefore $\eta(G_{i+1}) = 1$.

By Lemma~\ref{lem:imperfectSolution}, there exists a symmetric vector $\y$
satisfying
\[
(\mathbf{A}(G_{i+1}) \y)(u) = \begin{cases}
1, & \text{if } u = x_1 \text{ or } u = x_2; \\
-1, & \text{if } u = v_1 \text{ or } u = v_2; \\
 0, & \text{otherwise}.
\end{cases}
\]
In particular, we have $\y(x_0) + \y(x_4) + \y(v_1) = 1$ and $\y(x_0) + \y(x_1)
+ \y(x_5) = 0$. As $\y$ is symmetric, $\y(x_5) = \y(x_4)$. From here, we obtain
$\y(x_1) - \y(v_1) = -1$. Recall that $P_{i+1}=x_1 v_1 v_2 x_2$. Thus,
$\rho(G_{i+1}, P_{i+1}) = -1$.
\end{proof}

\begin{lemma}
\label{lem:symmetricKernelMan}
Let $(G_i,P_i)$ be as in Definition~\ref{def:manDogFamily}, and suppose that
$(G_{i+1},P_{i+1})=\mathcal{M}(G_i,P_i)$. Suppose that $\rho(G_i, P_i) \neq -4$,
that $\eta(G_i) = 1$ and that $\ker G_i$ is spanned by a vector $\s$ such that
$\s(v_0) = \s(v_1) = \s(v_2) = \s(v_3) = 1$. Then $\eta(G_{i+1}) = 1$ and $\s$
extends to $\widetilde{\s} \in \ker G_{i + 1}$ by $\widetilde{\s}(x_0) =
\widetilde{\s}(x_1) = \widetilde{\s}(x_2) = \widetilde{\s}(x_3) = 1$ and
$\widetilde{\s}(x_4) = \widetilde{\s}(x_5) = - 2$. Moreover, $\rho(G_{i+1},
P_{i+1}) = \frac{\rho(G_i,P_i)}{\rho(G_i,P_i)+4}$.
\end{lemma}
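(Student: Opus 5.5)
The plan follows the proof of Lemma~\ref{lem:symmetricKernelDog} step by step: reduce a kernel vector of $G_{i+1}$ to a vector on $G_i$ whose image under $\mathbf{A}(G_i)$ is a multiple of $\z_0$, then apply Lemma~\ref{lem:imperfectSolution}.

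\emph{Nullity.} Let $\x \in \ker G_{i+1}$. By Lemma~\ref{lem:symmetricKernel}, $\x$ is symmetric with respect to $\tau_{i+1}$, which swaps $x_0 \leftrightarrow x_3$ and $x_1 \leftrightarrow x_2$ and fixes $x_4, x_5$. So we may write
\[
\x(v_0)=\x(v_3)=\alpha,\quad \x(v_1)=\x(v_2)=\beta,\quad \x(x_0)=\x(x_3)=\gamma,\quad \x(x_1)=\x(x_2)=\delta,
\]
together with the values $\x(x_4)$ and $\x(x_5)$. The local conditions at $x_4, x_5, x_0, x_1$ give $\x(x_5)=-2\delta$, $\x(x_4)=-2\gamma$, $\alpha+\beta=2\delta$ and $\beta+\delta=2\gamma$. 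Let $\widehat{\x}$ be the restriction of $\x$ to $G_i$. Comparing the neighbourhoods of $v_0,\dots,v_3$ in $G_i$ and in $G_{i+1}$ shows that $\mathbf{A}(G_i)\widehat{\x}$ vanishes off $P_i$. It takes the value $\beta-\gamma$ at $v_0, v_3$ and the value $\alpha+\beta-\gamma-\delta$ at $v_1, v_2$. The relations above reduce the latter to $\gamma-\beta$, so $\mathbf{A}(G_i)\widehat{\x}=(\beta-\gamma)\z_0$.

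Lemma~\ref{lem:imperfectSolution} then gives $\alpha-\beta=(\beta-\gamma)\rho(G_i,P_i)$. Eliminating variables gives $\delta=2\gamma-\beta$ and $\alpha=4\gamma-3\beta$, so $\alpha-\beta=-4(\beta-\gamma)$. Hence $(\beta-\gamma)(\rho(G_i,P_i)+4)=0$, and the hypothesis $\rho(G_i,P_i)\neq-4$ forces $\beta=\gamma$. It follows that $\alpha=\beta=\gamma=\delta$, so $\widehat{\x}\in\ker G_i$ and $\widehat{\x}=\beta\s$. All new entries are then determined: $\beta$ on $x_0,\dots,x_3$ and $-2\beta$ on $x_4, x_5$. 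A direct check of the local conditions shows that $\widetilde{\s}\in\ker G_{i+1}$, so $\eta(G_{i+1})=1$.

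\emph{The value of $\rho$.} Let $\z_0'$ be the vector defined for $P_{i+1}=v_1x_1x_2v_2$ as in Lemma~\ref{lem:imperfectSolution}: it is $+1$ at $v_1, v_2$, $-1$ at $x_1, x_2$, and $0$ elsewhere. That lemma provides a symmetric $\y$ with $\mathbf{A}(G_{i+1})\y=\z_0'$. Write
\[
\y(v_0)=a,\quad \y(v_1)=b,\quad \y(x_0)=c,\quad \y(x_1)=d,
\]
and use the same symmetric ansatz for the remaining entries. The equations at $x_4, x_5, x_0, x_1$ give $\y(x_5)=-2d$, $\y(x_4)=-2c$, $a+b=2d$ and $b+d-2c=-1$. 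On the restriction $\widehat{\y}$, the same neighbourhood comparison as before shows that $\mathbf{A}(G_i)\widehat{\y}=t\,\z_0$ with $t=b-c$. Here the entry at $v_1$ is $1-c-d+a+b$, which equals $-t$ by the relations, as solvability requires. Lemma~\ref{lem:imperfectSolution} yields $a-b=t\rho$, where $\rho=\rho(G_i,P_i)$. Substituting $d=2c-b-1$ and $a=4c-3b-2$ gives $a-b=-4t-2$, hence $t=-2/(\rho+4)$. Therefore
\[
\rho(G_{i+1},P_{i+1})=b-d=2t+1=\frac{\rho}{\rho+4}.
\]

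The step that needs the most care is the neighbourhood bookkeeping at $v_0$ and $v_1$ when passing from $G_{i+1}$ to $G_i$. One must remove the new neighbours and add back the deleted edges $v_0v_1$, $v_1v_2$, $v_2v_3$, and confirm that the resulting right-hand side is exactly a multiple of $\z_0$. The common neighbour $w$ of $v_1$ and $v_2$ keeps the same entry in both graphs, so it cancels in this comparison.
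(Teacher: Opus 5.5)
Your proposal is correct and follows the paper's proof. Both arguments take a symmetric kernel vector, restrict it to $G_i$ to get a multiple of $\z_0$, and apply Lemma~\ref{lem:imperfectSolution} twice, once for nullity and once for $\rho$. The differences are only in parametrisation: your multiplier $\beta-\gamma$ is the paper's $\frac{1}{4}(\beta-\alpha)$, and your $t=b-c$ is the paper's $\frac{1}{2}\bigl(\rho(G_{i+1},P_{i+1})-1\bigr)$.
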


\begin{proof}
Let $\x \in \ker G_{i+1}$. Let $w$ denote the common neighbour of $v_1$ and
$v_2$ in $G_i$. Denote $\x(v_0)$ by $\alpha$ and $\x(v_1)$ by $\beta$. By
Lemma~\ref{lem:symmetricKernel}, $\x$ is symmetric. Using the local conditions
at the vertices $x_0, x_1, x_4, x_5$ and $v_1$, we obtain $\x(x_0) = \x(x_3) =
\frac{\alpha + 3\beta}{4}$, $\x(x_1) = \x(x_2) = \frac{\alpha + \beta}{2}$,
$\x(x_4) = \frac{-\alpha - 3\beta}{2}$, $\x(x_5) = -\alpha - \beta$ and $\x(w) =
\frac{-3\alpha - 5\beta}{4}$. Let $\widehat{\x}$ be the restriction of $\x$ to
graph $G_i$. Observe that
\begin{equation}
\label{eq:restrictionMan}
(\mathbf{A}(G_i) \widehat{\x}) (u) = \begin{cases}
\frac{\beta - \alpha}{4}, & \text{if } u = v_0 \text{ or } u = v_3; \\
\frac{\alpha - \beta}{4}, & \text{if } u = v_1 \text{ or } u = v_2; \\
 0, & \text{otherwise}.
\end{cases}
\end{equation}
Note that the right hand side of \eqref{eq:restrictionMan} is $\frac{1}{4}(\beta
- \alpha)\z_0$, where $\z_0$ is as defined in Lemma~\ref{lem:imperfectSolution}.
From Lemma~\ref{lem:imperfectSolution} we obtain $\widehat{\x}(v_0) -
\widehat{\x}(v_1) = \frac{1}{4}(\beta - \alpha)\rho(G_i, P_i)$. On the other
hand, $\widehat{\x}(v_0) - \widehat{\x}(v_1) = \x(v_0) - \x(v_1) = \alpha -
\beta$. Therefore, $(\beta - \alpha)(\frac{1}{4}\rho(G_i, P_i) + 1) = 0$. By
assumption $\rho(G_i, P_i) \neq -4$, which implies that $\alpha = \beta$, which
in turn implies $\widehat{\x} \in \ker \mathbf{A}(G_i)$ and thus $\widehat{\x} =
\beta \s$. Thus, the entries $\x(x_0) = \x(x_1) = \x(x_2) = \x(x_3) = \beta$ and
$\x(x_4) = \x(x_5) = -2\beta$ are uniquely determined. It is easy to check that
$\widetilde{\s} \in \ker G_{i+1}$ and therefore $\eta(G_{i+1}) = 1$.

It remains to determine $\rho(G_{i+1}, P_{i+1}) $. By
Lemma~\ref{lem:imperfectSolution}, there exists a symmetric vector $\y$
satisfying
\[
(\mathbf{A}(G_{i+1}) \y)(u) = \begin{cases}
1, & \text{if } u = v_1 \text{ or } u = v_2; \\
-1, & \text{if } u = x_1 \text{ or } u = x_2; \\
 0, & \text{otherwise}.
\end{cases}
\]
By using the local conditions at vertices $x_0, x_1, x_4$ and $x_5$, we obtain
$\y(v_0) + \y(x_5) + \y(v_1) = 0$, $\y(v_1) + \y(x_1) + \y(x_4) = -1$, $2\y(x_1)
+ \y(x_5) = 0$ and $2 \y(x_0) + \y(x_4) = 0$. Hence, $\y(v_0) = 2\y(x_1) -
\y(v_1)$ and $\y(x_0) = \frac{1}{2}(1 + \y(v_1) + \y(x_1))$. Recall that
$P_{i+1}= v_1 x_1 x_2 v_2$, and note that $\rho(G_{i+1}, P_{i+1}) = \y(v_1) -
\y(x_1)$. Let $\widehat{\y}$ be the restriction of $\y$ to $G_i$. Observe that
\[
(\mathbf{A}(G_i) \widehat{\y})(u) = \begin{cases}
\frac{1}{2}(\rho(G_{i+1}, P_{i+1}) - 1),
  & \text{if } u = v_0 \text{ or } u = v_3; \\
-\frac{1}{2}(\rho(G_{i+1}, P_{i+1}) - 1),
  & \text{if } u = v_1 \text{ or } u = v_2; \\
 0, & \text{otherwise}.
\end{cases}
\]
By another application of Lemma~\ref{lem:imperfectSolution}, we have $\y(v_0) -
\y(v_1) = \frac{1}{2}(\rho(G_{i+1}, P_{i+1}) - 1) \rho(G_i, P_i)$. Substituting
$2\y(x_1) - \y(v_1)$ for $\y(v_0)$, we obtain $-2 \rho(G_{i+1}, P_{i+1}) =
\frac{1}{2}(\rho(G_{i+1}, P_{i+1}) - 1) \rho(G_i, P_i)$. Hence, $\rho(G_{i+1},
P_{i+1}) = \frac{\rho(G_i,P_i)}{\rho(G_i,P_i)+4}$. Since $\rho(G_i, P_i) \neq
-4$, we are allowed to divide by $\rho(G_i,P_i)+4$.
\end{proof}

\begin{proof}[Proof of Theorem~\ref{thm:polyhedral}]
We first construct one polyhedral nut graph of bipartite edge frustration $k$
for every $k \geq 2$. The cases $k = 2$ and $k = 3$ are provided by
Table~\ref{tbl:3}. Let $G_0$ be the graph shown in
Figure~\ref{fig:polyhedralSeed}(a), with the distinguished path $P_0$ indicated.
It is straightforward to verify that $G_0$ is planar and $3$-connected.
Moreover, $\eta(G_0) = 1$ and $\varphi(G_0) = 4$. The entries of a kernel
eigenvector of $G_0$ are displayed in Figure~\ref{fig:polyhedralSeed}(a); in
particular, the eigenvector is full, and hence $G_0$ is a nut graph.

Starting from $(G_0,P_0)$, construct the sequence $(G_0,P_0), (G_1,P_1),
(G_2,P_2), \ldots$ as in Definition~\ref{def:manDogFamily}. By
Lemmas~\ref{lem:symmetricKernelDog} and~\ref{lem:symmetricKernelMan}, each graph
$G_i$ has nullity one and its kernel eigenvector is full, i.e., every $G_i$ is a
nut graph. Observe that $\rho(G_0,P_0) = \frac{1}{3}$ (a vector $\y_0$ of
Lemma~\ref{lem:imperfectSolution} is shown in
Figure~\ref{fig:polyhedralSeed}(b)). Moreover, $\rho(G_1,P_1) = \frac{1}{13}$,
$\rho(G_i,P_i) = -1$ for all even $i \geq 2$ and $\rho(G_i,P_i) = -\frac{1}{3}$
for all odd $i \geq 3$.

Next, we verify that the graphs $G_i$ are polyhedral. Planarity follows
inductively from the construction. Indeed, at every step the path $P_i$ lies on
the boundary of a face into which the corresponding $\mathcal M$- or $\mathcal
D$-gadget can be inserted without introducing an edge crossing; see
Figures~\ref{fig:polyhedralSeed}(c) and~\ref{fig:polyhedralSeed}(d) for
illustration. Thus $G_{i+1}$ is planar whenever $G_i$ is planar. Similarly,
$3$-connectivity is preserved by both operations $\mathcal M$ and $\mathcal D$.
This can be verified directly using Menger's Theorem.

By Lemma~\ref{lem:hadlock}, $\varphi(G_i) = i + 4$. More precisely, $G_0^*$
contains $8$ vertices of odd degree, and there is a matching $E_0$ of size $4$
that covers these $8$ vertices. Note that $G_{i+1}^*$ has $2$ more vertices of
odd degree than $G_i^*$. We can obtain matching $E_{i + 1}$ (of size $i + 5$) by
a local modification of $E_i$.

The family $G_0, G_1, G_2, \ldots{}$, together with the examples for $k=2$ and
$k=3$, gives one polyhedral nut graph with $\varphi(G) = k$ for every $k \geq
2$. For each such graph, repeated application of
Lemma~\ref{lem:fowlerFrustration} produces infinitely many pairwise
non-isomorphic polyhedral nut graphs with a given value $k$.
\end{proof}

\section{New constructions for extending nut graphs}
\label{sec:newConstruct}

\subsection{The cut construction}

In this section, we take a different direction and show how considerations from
bipartivity lead to new general constructions for nut graphs. Recall the
subdivision and the bridge constructions from
Proposition~\ref{prop:bridgeSubdivPreserve}. They both make nut graphs from a
starting nut graph. We will generalise the bridge construction, but first we
recall a theorem from~\cite{Gutman1998}:

\begin{theorem}[\cite{Gutman1998}]
\label{thm:GutmanTheorem}
Let $G$ be a nut graph with kernel eigenvector $\x$ and let $e = uv \in E(G)$ be
a bridge. Let $G - e = G_u \sqcup G_v$, where $G_u$ (resp.\ $G_v$) is the
connected component that contains vertex $u$ (resp.\ $v$). Let $\mathcal{B}(G,
e)$ be the graph obtained from $G$ by subdividing the edge $e$ with two new
vertices $w_1$ and $w_2$, such that $\{uw_1, w_1w_2, w_2v\} \subseteq
E(\mathcal{B}(G, e))$. Then $\mathcal{B}(G, e)$ is a nut graph with kernel
eigenvector $\x'$ defined as
\begin{equation}
\x'(t) = 
\begin{cases}
\phantom{-}\x(t) & \text{if } t \in V(G_u), \\
-\x(t) & \text{if } t \in V(G_v), \\
\phantom{-}\x(v) & \text{if } t = w_1, \\
-\x(u) & \text{if } t = w_2.
\end{cases}
\end{equation}
\end{theorem}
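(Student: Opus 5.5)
We need to prove Theorem~\ref{thm:GutmanTheorem}: $\mathcal{B}(G,e)$ is a nut graph with kernel vector $\x'$. The proof has two parts. First we check that $\x'$ lies in $\ker \mathcal{B}(G,e)$ by local conditions. Then we show the nullity is one, after which fullness of $\x'$ (clear, since $\x$ is full) gives the nut property.

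For the first part we verify the local conditions vertex by vertex.
\begin{enumerate}[label=(\roman*)]
\item At a vertex $t \in V(G_u)$ with $t \neq u$, all neighbours lie in $G_u$, so the condition is inherited from $\x$.
\item At $u$, the neighbour $v$ is replaced by $w_1$ with $\x'(w_1) = \x(v)$. The sum is again $\sum_{s \in G(u)} \x(s) = 0$.
\item At $t \in V(G_v)$ with $t \neq v$, the condition is the negation of the original one.
\item At $v$, the neighbours are those in $G_v$, carrying $-\x$, together with $w_2$, carrying $-\x(u)$. This is exactly $-\sum_{s \in G(v)} \x(s) = 0$.
\item At $w_1$, the sum is $\x'(u) + \x'(w_2) = \x(u) - \x(u) = 0$.
\item At $w_2$, the sum is $\x'(w_1) + \x'(v) = \x(v) - \x(v) = 0$.
\end{enumerate}
So $\x' \in \ker \mathcal{B}(G,e)$, and $\x'$ is full.

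For the second part, we reverse the map. Take any $\y \in \ker \mathcal{B}(G,e)$.
\begin{itemize}
\item The local condition at $w_1$ gives $\y(w_2) = -\y(u)$.
\item The condition at $w_2$ gives $\y(w_1) = -\y(v)$.
\item Define $\z$ on $V(G)$ by $\z = \y$ on $G_u$ and $\z = -\y$ on $G_v$.
\end{itemize}
Next we check that $\z \in \ker G$.
\begin{itemize}
\item Interior vertices of $G_u$ and $G_v$ are unaffected, or negated.
\item At $u$, the original sum is $\sum_{s \in G_u(u)} \y(s) + \y(w_1) = 0$. Since $\y(w_1) = -\y(v) = \z(v)$, this is the local condition for $\z$ at $u$.
\item At $v$, we have $\sum_{s \in G_v(v)} \y(s) + \y(w_2) = 0$ and $\y(w_2) = -\y(u)$. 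Negating gives $\sum_{s \in G_v(v)} \z(s) + \z(u) = 0$.
\end{itemize}
Thus $\z \in \ker G = \langle \x \rangle$, say $\z = \lambda \x$. It follows that $\y = \lambda \x'$ on $V(G)$. The entries $\y(w_1) = -\y(v) = \lambda \x(v)$ and $\y(w_2) = -\y(u) = -\lambda \x(u)$ agree with $\lambda \x'$ as well. Hence $\ker \mathcal{B}(G,e) = \langle \x' \rangle$.

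Nothing here is a serious obstacle. The only delicate point is the sign bookkeeping at the two endpoints $u$ and $v$, and the relation $\y(w_1) = -\y(v)$ is what makes the inverse map consistent with $\z = -\y$ on $G_v$.
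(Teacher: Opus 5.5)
Your proof is correct and is essentially the paper's argument: the paper does not prove this statement separately but obtains it as the special case $A = V(G_u)$, $\delta(A) = \{e\}$, of the cut construction (Theorem~\ref{thm:CutConstruction}), whose proof uses exactly your forward map $\x \mapsto \x'$ and your inverse map $\y \mapsto \z$ (sign flip on one side of the cut, with the entries on the two new vertices forced by their local conditions). The only cosmetic difference is that you show directly that $\y = \lambda \x'$, whereas the paper compares nullities via linear independence of the images of basis vectors.
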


\noindent This can be generalised to double subdivision of the edges comprising
any cut set.

\begin{theorem}[The cut construction]
\label{thm:CutConstruction}
Let $G$ be a graph and let $A \subseteq V(G)$. Let $\mathcal{C}(G, A)$ be the
graph obtained from $G$ by subdividing with two vertices every edge from
$\delta(A)$. Then $\mathcal{C}(G, A)$ is a nut graph if and only if $G$ is a nut
graph.
\end{theorem}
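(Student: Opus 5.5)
The plan is to show that the kernels of $G$ and $\mathcal{C}(G,A)$ are isomorphic, via an explicit linear bijection that keeps entries non-zero. This mirrors Theorem~\ref{thm:GutmanTheorem}, with $A$ playing the role of $V(G_u)$. For each edge $e=uv\in\delta(A)$ with $u\in A$, $v\notin A$, write the subdivided path as $u\,w_1^e\,w_2^e\,v$.

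Given $\x\colon V(G)\to\mathbb{R}$, define $\x'$ on $V(\mathcal{C}(G,A))$ by
\[
\x'(t)=\begin{cases}\x(t) & t\in A,\\ -\x(t) & t\in V(G)\setminus A,\\ \x(v) & t=w_1^e,\\ -\x(u) & t=w_2^e.\end{cases}
\]
The first step is to check that $\x\in\ker G$ implies $\x'\in\ker\mathcal{C}(G,A)$, using the local conditions.

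At $w_1^e$ the neighbours are $u$ and $w_2^e$, so the sum is $\x(u)-\x(u)=0$. At $w_2^e$ the neighbours are $w_1^e$ and $v$, giving $\x(v)-\x(v)=0$.

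At $u\in A$, a neighbour $y\in A$ contributes $\x(y)$. A cut edge $uv$ is replaced by the neighbour $w_1^{e}$, which contributes $\x(v)$. So the local sum equals $\sum_{y\in G(u)}\x(y)=0$. For $v\notin A$ the same computation gives $-\sum_{y\in G(v)}\x(y)=0$.

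The second step is the converse: every $\z\in\ker\mathcal{C}(G,A)$ has this form. The local condition at $w_1^e$ forces $\z(w_2^e)=-\z(u)$, and the one at $w_2^e$ forces $\z(w_1^e)=-\z(v)$. Define $\x=\z$ on $A$ and $\x=-\z$ off $A$. Then $\z(w_1^e)=\x(v)$ and $\z(w_2^e)=-\x(u)$, so $\z=\x'$. Reversing the local-sum computation at original vertices shows $\x\in\ker G$.

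Hence $\x\mapsto\x'$ is a linear isomorphism $\ker G\to\ker\mathcal{C}(G,A)$, and $\eta(G)=\eta(\mathcal{C}(G,A))$. Moreover $\x'$ is full if and only if $\x$ is full: the new entries are $\pm$ old entries, and every old entry reappears up to sign. Nullity one plus a full kernel vector therefore transfers in both directions. Both graphs have at least $2$ vertices whenever either does, since subdivision only adds vertices, and a nut graph of order $\ge 2$ has an edge.

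I expect no real obstacle here; the only care needed is the bookkeeping of orientation for each cut edge (which endpoint lies in $A$), together with the edge cases. If $A=\emptyset$ or $A=V(G)$, then $\delta(A)=\emptyset$ and the statement is trivial. Simplicity of $\mathcal{C}(G,A)$ is automatic.
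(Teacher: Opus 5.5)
Your proof is correct and is essentially the paper's argument. You use the same extension $\x\mapsto\x'$ (sign flip off $A$, entries $\x(v)$ and $-\x(u)$ on the subdivision vertices), the same local-condition checks, and the same inverse $\z\mapsto\widehat{\z}$ read off from the conditions at the degree-$2$ vertices. The only difference is packaging: the paper compares nullities by pushing bases forward in both directions and asserting linear independence, whereas you show directly that every $\z\in\ker\mathcal{C}(G,A)$ equals $\x'$ for some $\x\in\ker G$, which makes the bijection explicit.
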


\begin{figure}[!htbp]
\centering
\subcaptionbox{$G$} { 
\input{img/cut_construction_panela.tikz}
}
\qquad
\subcaptionbox{$\mathcal{C}(G, A)$} { 
\input{img/cut_construction_panelb.tikz}
}
\caption{The cut construction.}
\label{fig:theorem3}
\end{figure}

\begin{proof}
Let the edges of $\delta(A)$ be denoted $e_1 = u_1 w_1, e_2 = u_2 w_2, \ldots,
e_k = u_k w_k$, where $u_i \in A$ and $w_i \in V(G) \setminus A$. Let
$V(\mathcal{C}(G, A)) = V(G) \cup \{a_i, b_i \mid i = 1,\ldots, k \}$ and
$E(\mathcal{C}(G, A)) = \omega(A) \cup \{ u_i a_i, a_i b_i, b_i w_i \mid i = 1,
\ldots, k \}$; see the schematic Figure~\ref{fig:theorem3}. Note that vertices
$u_1, \ldots, u_k$ (and $w_1, \ldots, w_k$) are not necessarily distinct.

First, we prove that $\eta(\mathcal{C}(G, A)) = \eta(G)$. Let $\x \in \ker G$.
Let us define $\widetilde{\x}\colon V(\mathcal{C}(G, A)) \to \mathbb{R}$ by
\begin{equation}
\widetilde{\x}(t) = 
\begin{cases}
\phantom{-}\x(t) & \text{if } t \in A, \\
-\x(t) & \text{if } t \in V(G) \setminus A, \\
\phantom{-}\x(w_i) & \text{if } t = a_i, \\
-\x(u_i) & \text{if } t = b_i.
\end{cases}
\label{eq:definitionOfWidetildeX}
\end{equation}
By checking local conditions for vertices of $\mathcal{C}(G, A)$ it is easy to
see that $\widetilde{\x} \in \ker \mathcal{C}(G, A)$. From basis vectors
$\x^{(1)}, \x^{(2)}, \ldots , \x^{(\eta)}$ that span $\ker G$, we obtain
$\widetilde{\x}^{(1)}, \widetilde{\x}^{(2)}, \ldots , \widetilde{\x}^{(\eta)}$.
As vectors $\widetilde{\x}^{(1)}, \widetilde{\x}^{(2)}, \ldots ,
\widetilde{\x}^{(\eta)}$ are linearly independent, it follows that
$\eta(\mathcal{C}(G, A)) \geq \eta(G)$.

Now, suppose that $\y \in \ker\mathcal{C}(G, A)$. From the local condition at
vertex $a_i$ it follows that $\y(u_i) = -\y(b_i)$ for every $i = 1, \ldots, k$.
Similarly, from the local condition at $b_i$ it follows that $\y(w_i) =
-\y(a_i)$ for every $i = 1, \ldots, k$. Let us define $\widehat{\y}\colon V(G)
\to \mathbb{R}$ by
\begin{equation}
\widehat{\y}(t) = 
\begin{cases}
\phantom{-}\y(t) & \text{if } t \in A, \\
-\y(t) & \text{if } t \in V(G) \setminus A. 
\end{cases}
\label{eq:definitionOfWidethatY}
\end{equation}
By checking local conditions for all vertices of $G$ it is easy to see that
$\widehat{\y} \in \ker G$. Similarly as before, we consider basis vectors
$\y^{(1)}, \y^{(2)}, \ldots , \y^{(\mu)}$ that span $\ker \mathcal{C}(G, A)$.
From these vectors we obtain $\widehat{\y}^{(1)}, \widehat{\y}^{(2)}, \ldots ,
\widehat{\y}^{(\mu)}$ that are linearly independent, and hence
$\eta(\mathcal{C}(G, A)) \leq \eta(G)$. Therefore, $\eta(\mathcal{C}(G, A)) =
\eta(G)$.

From \eqref{eq:definitionOfWidetildeX} it is easy to see that $\widetilde{\x}$
contains no zero entries if $\x$ contained none. Likewise, from
\eqref{eq:definitionOfWidethatY}, $\widehat{\y}$ contains no zero entries if
$\y$ contained none. Hence, the theorem follows.
\end{proof}

Note that in Theorem~\ref{thm:CutConstruction}, `nut graph' can be replaced
everywhere by `core graph'. As a bridge is a cut set of size $1$,
Theorem~\ref{thm:CutConstruction} subsumes Theorem~\ref{thm:GutmanTheorem},
which was proved in a different way in \cite{Gutman1998}.

\subsection{The frustration construction}

It is possible to define another construction by choosing to subdivide the edges
of the frustration set instead of those of the cut set. The notation
$\mathcal{F}(G, A)$ which we use here is not to be confused with $F(G,v)$ as
used in {\S}~\ref{subsec:fowler}.

\begin{theorem}[The frustration construction]
\label{thm:FrustrationConstruction}
Let $G$ be a graph and let $A \subseteq V(G)$. Let $\mathcal{F}(G, A)$ be the
graph obtained from $G$ by subdividing with two vertices every edge from
$\omega(A)$. Then $\mathcal{F}(G, A)$ is a nut graph if and only if $G$ is a nut
graph.
\end{theorem}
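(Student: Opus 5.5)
Proof plan. The proof will mirror that of Theorem~\ref{thm:CutConstruction}. The idea is to double-subdivide the edges of $\omega(A)$ rather than $\delta(A)$. Label the edges of $\omega(A)$ as $e_i = u_i w_i$ for $i = 1,\ldots,k$. For these edges both endpoints lie on the same side of the cut, and the orientation of each edge is fixed arbitrarily. Each $e_i$ is replaced by the path $u_i a_i b_i w_i$, so that
\[
E(\mathcal{F}(G,A)) = \delta(A) \cup \{u_ia_i, a_ib_i, b_iw_i \mid i=1,\ldots,k\}.
\]
The key observation is that double subdivision of an edge flips the sign with which one endpoint ``sees'' the other. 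Applying this to the frustration set is therefore equivalent to negating the contribution of every edge not in $\delta(A)$. The sign pattern $\x(t)$ on $A$ and $-\x(t)$ off $A$ from the cut construction is adapted to this situation. The precise choice is to keep $\x$ unchanged on $V(G)$ and set $\widetilde{\x}(a_i) = -\x(w_i)$ and $\widetilde{\x}(b_i) = -\x(u_i)$.

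First, this extension lies in the kernel. At $a_i$ the local condition reads $\x(u_i) + \widetilde{\x}(b_i) = 0$, which holds. At $b_i$ the condition is symmetric. At a vertex $t \in V(G)$, each former edge $t w_i \in \omega(A)$ is now represented by the neighbour $a_i$ (or $b_i$), whose value is $-\x(w_i)$. The sum over neighbours of $t$ in $\mathcal{F}(G,A)$ is therefore
\[
\sum_{s \in G(t),\, ts\in\delta(A)} \x(s) \;-\; \sum_{s\in G(t),\, ts\in\omega(A)} \x(s).
\]
This is not the same as the original local condition, so the naive choice fails. The fix is to twist $\x$ on $V(G)$ as well. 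The extension must satisfy $\widetilde{\x}(t) = \x(t)$ for $t\in A$ and $\widetilde{\x}(t)=-\x(t)$ otherwise, which makes the $\delta(A)$ terms change sign. Then an overall sign must be chosen for the subdivision vertices so that both kinds of term carry the same sign. The intermediate step I would actually carry out is to view the operation as $\mathcal{C}(G,A)$ followed by double subdivision of \emph{all} edges. Indeed, double-subdividing every edge of $\mathcal{F}(G,A)$ or of $\mathcal{C}(G,A)$ gives, up to quadrupling, the same graph. This suggests an alternative route. First prove the special case $A = V(G)$, i.e.\ $\delta(A)=\emptyset$ and total double subdivision, which is the consequence advertised in the abstract. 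Then reduce the general case to it by combining with Theorem~\ref{thm:CutConstruction}.

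Concretely, the key steps are as follows. (1) Show that total double subdivision $\mathcal{F}(G,V(G))$ preserves nullity and fullness. Any $\y$ in the kernel of the subdivided graph satisfies $\y(u)=-\y(b_{uv})$ and $\y(v)=-\y(a_{uv})$. The local condition at an original vertex $t$ then becomes $-\sum_{s\in G(t)}\y(s)=0$. Hence restriction to $V(G)$ is a linear bijection between the two kernels, with inverse given by extension via $a = -\x(w)$ and $b=-\x(u)$. Fullness transfers in both directions exactly as in the proof of Theorem~\ref{thm:CutConstruction}. (2) For general $A$, run the same argument directly. A kernel vector $\y$ of $\mathcal{F}(G,A)$ determines values at the subdivision vertices from those at the endpoints. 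The local condition at $t\in V(G)$ becomes $\sum_{\delta}\y(s)-\sum_{\omega}\y(s)=0$. This is the local condition for the adjacency matrix of $G$ with the $\omega(A)$ edges given weight $-1$. (3) Show that this signed graph is switching-equivalent to $G$ with all signs negated, namely by switching at $A$. Switching at $A$ (negating $\y$ on $A$) turns the signs to $-1$ on $\delta(A)$ and $+1$ on $\omega(A)$. Taking the negative of the matrix then changes nothing about the kernel. The map $\y\mapsto\widehat{\y}$ with $\widehat{\y}=\y$ on $V(G)\setminus A$ and $\widehat{\y}=-\y$ on $A$ therefore sends $\ker\mathcal{F}(G,A)$ bijectively onto $\ker G$. (4) Conclude that the nullities agree and that the zero pattern on $V(G)$ is preserved. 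A subdivision vertex is zero exactly when the corresponding endpoint value is zero.

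The main obstacle is step (3), i.e.\ getting the signs right. An edge that is double subdivided contributes with the opposite sign to one that is not, so the combined sign pattern must be switching-equivalent to a uniform one. I would verify this by checking the two edge types separately. For an edge in $\delta(A)$, the switching flips exactly one endpoint, giving sign $-1$. For an edge in $\omega(A)$, the switching flips either zero or two endpoints, so the subdivision sign $-1$ persists. Everything is therefore uniformly $-1$, which gives the kernel of $-\mathbf{A}(G)$, equal to $\ker G$. Once this is confirmed, the remaining steps are routine.
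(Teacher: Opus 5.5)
Your proposal is correct and is essentially the paper's proof. The paper likewise forces the values at the new vertices from the local conditions there. It flips signs on one side of the cut, negating on $V(G)\setminus A$ rather than on $A$, which differs from your map only by a global sign. It then transfers kernel vectors in both directions. Your switching argument (the signed matrix with $-1$ on $\omega(A)$, switched at $A$, equals $-\mathbf{A}(G)$) is a compact repackaging of the paper's vertex-by-vertex checks of the local conditions.

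There are two blemishes. The sentence in step (3) claiming that switching leaves $+1$ on $\omega(A)$ is wrong, and it contradicts your own correct final paragraph, which shows every sign becomes $-1$. The aside that double subdivision of $\mathcal{F}(G,A)$ and of $\mathcal{C}(G,A)$ gives the same graph is false as stated, but your argument never uses it.
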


\begin{figure}[!htbp]
\centering
\subcaptionbox{} {
\input{img/fig16_panela.tikz}
}
\qquad
\subcaptionbox{} { 
\input{img/fig16_panelb.tikz}
}
\caption{The frustration construction.}
\label{fig:theorem4}
\end{figure}

\begin{proof}
The proof follows that of Theorem~\ref{thm:CutConstruction} with minor
modifications. Let us label $A = \{ a_1, a_2, \ldots, a_k \}$ and $V(G)
\setminus A = \{ b_1, b_2, \ldots, b_{n-k} \}$, where $n = |V(G)|$. Moreover,
let us label
\[
V(\mathcal{F}(G, A)) = V(G) 
\cup \{ a_{i j} \mid 1 \leq i, j \leq k, a_ia_j \in E(G) \} 
\cup \{ b_{i j} \mid 1 \leq i, j \leq n - k, b_ib_j \in E(G) \}.
\]
Note that the two new vertices that were obtained by subdividing the edge $a_i
a_j$ are labelled $a_{i j}$ and $a_{j i}$; see the schematic
Figure~\ref{fig:theorem4}. The vertex $a_{i j}$ is closer to $a_i$ than to
$a_j$, and the analogous convention is used for $b_{i j}$. The set of edges of
$\mathcal{F}(G, A)$ is thus
\begin{align*}
E(\mathcal{F}(G, A)) & = \{ a_ib_j \mid a_ib_j \in E(G) \} \\
 & \qquad \cup \{ a_i a_{i j}, a_{i j} a_{j i}, a_{j i} a_j
    \mid a_i a_j \in E(G) \}
    \cup \{ b_i b_{i j}, b_{i j} b_{j i}, b_{j i} b_j \mid b_i b_j \in E(G) \}.
\end{align*}
First, we prove that $\eta(\mathcal{F}(G, A)) = \eta(G)$. Let $\x \in \ker G$.
Let us define $\widetilde{\x}\colon V(\mathcal{F}(G, A)) \to \mathbb{R}$ by
\begin{equation}
\widetilde{\x}(t) = 
\begin{cases}
\phantom{-}\x(t) & \text{if } t \in A, \\
-\x(t) & \text{if } t \in V(G) \setminus A, \\
-\x(a_j) & \text{if } t = a_{ij}, \\
\phantom{-}\x(b_j) & \text{if } t = b_{ij}.
\end{cases}
\label{eq:definitionOfWidetildeX2}
\end{equation}
By checking the local conditions for vertices of $\mathcal{F}(G, A)$ it is easy
to see that $\widetilde{\x} \in \ker (\mathcal{F}(G, A))$. Using the same line
of argument as in the proof of Theorem~\ref{thm:CutConstruction}, one can check
that $\eta(\mathcal{F}(G, A)) \geq \eta(G)$.

Similarly, suppose that $\y \in \ker\mathcal{F}(G, A)$. From the local condition
at vertices $a_{ij}$ and $a_{ji}$ it follows that $\y(a_i) = -\y(a_{ji})$ and
$\y(a_j) = -\y(a_{ij})$. Likewise, from the local condition at vertices $b_{ij}$
and $b_{ji}$, we have $\y(b_i) = -\y(b_{ji})$ and $\y(b_j) = -\y(b_{ij})$. Let
us define $\widehat{\y}\colon V(G) \to \mathbb{R}$ by
\begin{equation}
\widehat{\y}(t) = 
\begin{cases}
\phantom{-}\y(t) & \text{if } t \in A, \\
-\y(t) & \text{if } t \in V(G) \setminus A. 
\end{cases}
\label{eq:definitionOfWidethatY2}
\end{equation}
By checking the local conditions for vertices of $G$ it is easy to see that
$\widehat{\y} \in \ker G$, and to argue that $\eta(\mathcal{F}(G, A)) \leq
\eta(G)$. Therefore, $\eta(\mathcal{F}(G, A)) = \eta(G)$.

From \eqref{eq:definitionOfWidetildeX2} it is easy to see that $\widetilde{\x}$
contains no zero entries if $\x$ contained none. Likewise, from
\eqref{eq:definitionOfWidethatY2}, $\widehat{\y}$ contains no zero entries if
$\y$ contained none. Hence, the theorem follows.
\end{proof}

Note that in Theorem~\ref{thm:FrustrationConstruction}, `nut graph' can be
replaced everywhere by `core graph'. Theorems~\ref{thm:CutConstruction}
and~\ref{thm:FrustrationConstruction} are complementary in the following sense.
We choose a cut $A$ in graph $G$; the cut $A$ is not necessarily maximum. Then
we can subdivide either the cut set or the frustration set. Note that this
frustration set does not necessarily correspond to a max-cut and thus does not
necessarily realise bipartite edge frustration. The edges that are subdivided in
one theorem are left intact in the other, and vice versa. The combination of
Theorems~\ref{thm:CutConstruction} and~\ref{thm:FrustrationConstruction} can
yield a veritable cornucopia of nut graphs. In particular, if we take $A = V(G)$
in Theorem~\ref{thm:FrustrationConstruction}, we have the easy but still
striking observation that double subdivision of all edges of a nut graph leads
to another nut graph. We could call this construction \emph{total double
subdivision}.

\begin{corollary}[Total double subdivision]
Let $G$ be a graph and let $\mathcal{O}(G)$ be the graph obtained from $G$ by
subdividing with two vertices every edge of $G$. Then $\mathcal{O}(G)$ is a nut
graph if and only if $G$ is a nut graph.
\end{corollary}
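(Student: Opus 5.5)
The corollary should follow directly from Theorem~\ref{thm:FrustrationConstruction} by choosing $A = V(G)$. With this choice the cut set $\delta(A)$ is empty: an edge of $\delta(A)$ needs one endpoint in $A$ and one in $V(G) \setminus A = \emptyset$. Hence the frustration set is $\omega(A) = E(G) \setminus \delta(A) = E(G)$. The paper explicitly allows either side of a cut to be empty, so $A = V(G)$ is a legitimate cut. The graph $\mathcal{F}(G, V(G))$ is then obtained by subdividing every edge of $G$ with two new vertices, and so it coincides with $\mathcal{O}(G)$.

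Substituting into Theorem~\ref{thm:FrustrationConstruction} gives that $\mathcal{O}(G) = \mathcal{F}(G, V(G))$ is a nut graph if and only if $G$ is a nut graph. There is essentially no obstacle. The only point to check is that the proof of Theorem~\ref{thm:FrustrationConstruction} does not quietly assume $A \neq \emptyset$ and $V(G) \setminus A \neq \emptyset$.

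To check this, I would specialise the vector maps of that proof to $k = n$. There are no $b$-vertices, so $\widetilde{\x}$ equals $\x$ on $V(G)$ and $\widetilde{\x}(a_{ij}) = -\x(a_j)$ on the subdivision vertices. The inverse map $\widehat{\y}$ is simply the restriction of $\y$ to $V(G)$.

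The local conditions are then immediate. At a subdivision vertex $a_{ij}$, whose neighbours are $a_i$ and $a_{ji}$, we get $\x(a_i) - \x(a_i) = 0$. At an original vertex $a_i$ the sum is $\sum_{j} -\x(a_j) = 0$, because $\x \in \ker G$. In the other direction, the local conditions at the subdivision vertices force $\y(a_{ij}) = -\y(a_j)$. The condition at $a_i$ therefore becomes $-\sum_{a_j \in G(a_i)} \y(a_j) = 0$, so $\widehat{\y} \in \ker G$.

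Both maps are linear and injective, so the nullities agree. Fullness is also preserved in both directions: the entries of $\widetilde{\x}$ are, up to sign, entries of $\x$, and an entry of $\widehat{\y}$ is an entry of $\y$. This confirms the corollary. The same argument works with `nut graph' replaced by `core graph', as the paper remarks after Theorem~\ref{thm:FrustrationConstruction}.
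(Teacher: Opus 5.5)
Your proposal is correct and follows the paper's own route: the paper obtains this corollary by taking $A = V(G)$ in Theorem~\ref{thm:FrustrationConstruction}, so that $\omega(A) = E(G)$ and $\mathcal{F}(G, V(G)) = \mathcal{O}(G)$. Your extra check that the theorem's vector maps still work when $V(G) \setminus A = \emptyset$ is sound; it adds rigour the paper leaves implicit.
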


\noindent It is easy to see that the cut, frustration and total double
subdivision constructions all preserve the bipartite edge frustration.

\begin{proposition}
Let $G$ be a graph and let $A \subseteq V(G)$. Then
\[
\varphi(\mathcal{F}(G, A)) = \varphi(\mathcal{C}(G, A)) = 
\varphi(\mathcal{O}(G)) = \varphi(G).
\]
\end{proposition}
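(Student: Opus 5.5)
Since $\mathcal{O}(G) = \mathcal{F}(G, V(G))$ (for $A = V(G)$ we have $\delta(A) = \emptyset$, so $\omega(A) = E(G)$), it suffices to prove a single general statement: if $G'$ arises from $G$ by replacing each edge of some set $S \subseteq E(G)$ by a path of length $3$ (two internal vertices), then $\varphi(G') = \varphi(G)$. Applying this with $S = \delta(A)$ gives the claim for $\mathcal{C}(G,A)$, with $S = \omega(A)$ for $\mathcal{F}(G,A)$, and with $S = E(G)$ for $\mathcal{O}(G)$. By induction on $|S|$ it is enough to treat a single edge $e = uv$ replaced by the path $u a b v$.

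For $\varphi(G') \le \varphi(G)$, take an optimal bipartition (colouring) of $V(G)$ with $\varphi(G)$ frustrated edges and keep it on $V(G) \subseteq V(G')$. All edges other than $e$ keep their status. If $u$ and $v$ have different colours ($e$ not frustrated), colour $a$ opposite to $u$ and $b$ opposite to $a$; then $b$ has the colour of $u$, which is opposite to $v$, so $ua$, $ab$, $bv$ are all unfrustrated. If $u$ and $v$ have the same colour ($e$ frustrated), colour $a$ opposite to $u$ and $b$ with the colour of $u$. Then only $bv$ is frustrated, so the path contributes exactly one frustrated edge, replacing $e$. Hence $\varphi(G') \le \varphi(G)$.

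For $\varphi(G) \le \varphi(G')$, take an optimal colouring of $V(G')$ and restrict it to $V(G)$. Again only the contribution of $e$ versus that of the path $uabv$ needs comparing. The path $uabv$ has odd length, so if its three edges were all unfrustrated, the colours would alternate and $u, v$ would receive different colours. Consequently, if $u$ and $v$ have the same colour, at least one path edge is frustrated, matching the single frustrated edge $e$ in $G$. If $u$ and $v$ have different colours, $e$ is unfrustrated in $G$ and contributes $0 \le$ the path's contribution. Thus the restriction has at most as many frustrated edges as the colouring of $G'$, giving the reverse inequality.

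There is no real obstacle here; the only point needing care is the parity observation that a length-$3$ path behaves like a single edge with respect to frustration (odd length means it is properly $2$-coloured iff its ends differ). Alternatively, one can phrase the whole argument via odd cycles: subdividing an edge twice preserves the parity of every cycle length, and any odd-cycle transversal using an edge of the new path can be replaced by the original edge.
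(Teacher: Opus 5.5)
Your proof is correct and uses the same underlying fact the paper relies on: a path of length $3$ through two new degree-$2$ vertices behaves exactly like the original edge with respect to frustration. The paper gives no formal proof, only the remark that an optimal colouring puts at most one frustrated edge on each such thread (by recolouring internal vertices); your two-inequality argument (extend an optimal colouring of $G$, and restrict an optimal colouring of the subdivided graph using the parity of the odd path) fills in the details and does not need that at-most-one observation.
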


The crucial observation is the following. Suppose that a graph $G$ contains a
path $P = v_0 v_1 v_2 \ldots v_k$, where $\deg(v_i) = 2$ for all $0 < i < k$.
Let $c \colon V(G) \to \{ b, w \}$ be a colouring that realises the bipartite
edge frustration. Then path $P$ contains at most one frustrated edge in
colouring $c$. To see this, observe that changing the colour of vertex $v_i$
reverses the status of each of its incident edges ($v_{i-1}v_i$ and
$v_iv_{i+1}$) from frustrated to non-frustrated (or vice versa).

\section{Future work}
\label{Zukunft}

In {\S}\ref{sec:5dot4}, we studied regular nut graphs with prescribed bipartite
edge frustration. We believe that for every $d \geq 3$ and `feasible' $k \geq 2$
there exists a $d$-regular nut graph $G$ with $\varphi(G) = k$. Here, feasible
means that $k$ adheres to Proposition~\ref{prop:explainMissingFrust}. More
precisely, we conjecture the following.

\begin{conjecture}
For $d \geq 3$ the following statements hold.
\begin{enumerate}[label=(\roman*)]
\item If $d$ is odd, then for all $k \geq d$ and for all even $k \geq 2$ there
exists a $d$-regular nut graph $G$ with $\varphi(G) = k$. Moreover, there exists
no $d$-regular nut graph $G$ with odd $\varphi(G)$ where $\varphi(G) < d$.
\item If $d \equiv 0 \pmod{4}$, then for every even $k \geq 2$ there exists a
$d$-regular nut graph $G$ with $\varphi(G) = k$. Moreover, there exists no
$d$-regular nut graph $G$ with odd $\varphi(G)$.
\item If $d \equiv 2 \pmod{4}$, then for every even $k \geq 2$ and for every odd
$k \geq \frac{d}{2}$ there exists a $d$-regular nut graph $G$ with $\varphi(G) =
k$. Moreover, $|V(G)| \equiv k \pmod{2}$, and there exists no $d$-regular nut
graph $G$ with odd $\varphi(G)$ where $\varphi(G) < \frac{d}{2}$.
\end{enumerate}
\end{conjecture}

\noindent The cases where $3 \leq d \leq 8$ are settled by
Corollary~\ref{thm:cubicAnyFrus}, Theorems~\ref{thm:quarticAnyFrus}, and
\ref{thm:quinticAnyFrus}--\ref{thm:octicAnyFrus}. The proof technique used for
Theorems~\ref{thm:quinticAnyFrus}--\ref{thm:octicAnyFrus} could be used to add
specific higher values of $d$.

We have shown that for each $k \geq 2$ there exist infinitely many polyhedral
nut graphs with bipartite edge frustration $k$. From Table~\ref{tbl:4} we see
that nut fullerenes exist for $\varphi \in \{6, 7, 8, 10, 12, 14, 16\}$. This
leads us to conjecture the following:

\begin{conjecture}
For every even $\varphi \geq 6$ there exist infinitely many nut fullerenes of
bipartite edge frustration $\varphi$.
\end{conjecture}

\begin{conjecture}
The nut fullerene in Figure~\ref{fig:fullerFru7} is the only nut fullerene with
odd bipartite edge frustration.
\end{conjecture}

\noindent In principle, examples of nut fullerenes for higher values of
$\varphi$ could be found by exhaustive enumeration beyond order $n = 240$. This
would not get us very far, because the number of vertices of the smallest
possible fullerene graph increases quadratically with respect to frustration.

\subsection{Other measures of bipartivity}

Turning to applications, we note that researchers in various disciplines are
interested in assessing bipartivity of networks. The quantity
\begin{equation}
b_{\mathrm A}(G) = 1 - \frac{\varphi(G)}{|E(G)|}
\end{equation} 
has been proposed as a measure of the bipartivity of $G$, and used in the study
of the Ising model in physics and of large relationship networks of various
kinds~\cite{Holme2003}. In this formula, $\varphi(G)$ is often perforce replaced
by some approximate estimate. A radically different approach relies on cheaply
computed properties to provide a proxy for bipartivity \cite{Estrada2005}.
Another measure of bipartivity \cite{Estrada2012} is based on spectral graph
theory, and exploits the fact that spectra of bipartite graphs obey the Pairing
Theorem~\cite{Coulson1}. The \emph{Estrada Index} of a graph $G$ is
\begin{equation}
EE(G) = \sum_{j=1}^{n} e\sp{\lambda_j},
\end{equation}
where $\lambda_j$ for $j = 1, 2, \ldots, n$ are the eigenvalues of the adjacency
matrix $\mathbf{A}(G)$. The Estrada index can be viewed as a weighted sum of odd
and even closed walks in the graph, giving more weight to short walks.
Separation of contributions from even and odd closed walks is achieved by
splitting the exponentials:
\begin{equation}
EE(G) = \sum_{j=1}^{n} \cosh(\lambda_j) + \sum_{j=1}^{n} \sinh(\lambda_j).
\end{equation}
The \emph{spectral bipartivity measure}~\cite{Estrada2012,Estrada2005} is
\begin{equation}
b_{\mathrm S}(G) = \frac
{\sum_{j=1}^{n} \cosh(\lambda_j)}{EE(G)} = 1 - \frac
{\sum_{j=1}^{n} \sinh(\lambda_j)}{EE(G)},
\end{equation}
in direct analogy with $b_{\mathrm A}(G)$. The measure $b_{\mathrm S}(G)$ has
been applied to chemical graphs such as fullerenes~\cite{Doslic2005}. It would
be interesting to check how it performs for nut graphs, and in particular to see
how closely nut graphs approach the bipartite limiting value of $1$. Several
other spectral measures of bipartivity can be constructed~\cite{Aleidan2023},
and it could be informative to see how these perform for nut graphs.

A different kind of bipartivity measure is the \emph{bipartite vertex
frustration}, denoted by $\psi(G)$, i.e.\ the minimum number of vertices of $G$
whose deletion results in a bipartite subgraph. A vertex set whose removal
results in a bipartite graph is often called an \emph{odd-cycle vertex
transversal}~\cite{Fiorini2007,Reed2004}, whether or not it has minimum
cardinality. Thus, $\psi(G)$ is the cardinality of a minimum odd-cycle vertex
transversal of $G$. In fact, for planar graphs and each fixed integer $k$, a
minimum odd-cycle vertex transversal can be found in linear time whenever
$\psi(G) \leq k$; otherwise, the algorithm determines that $\psi(G) >
k$~\cite{Fiorini2008}. Clearly, there is no nut graph $G$ for which $\psi(G) =
0$. However, for every $k \geq 1$ one can easily find a nut graph $G$ with
$\psi(G) = k$. For instance, $\psi(S_1) = 1$, $\psi(S_2) = 2$ and
$\psi(\mathcal{T}_k) = k$ for $k \geq 3$, where $\mathcal{T}_k$ is as defined in
\cite[Proposition 15]{Basic2024}.

\section*{Acknowledgements}
The work of NB is supported in part by the Slovenian Research and Innovation
Agency (Research Programme P1-0294). Part of this research was carried out
while NB was affiliated with the University of Primorska, Slovenia.
PWF thanks the Leverhulme Trust for an Emeritus Fellowship on the theme of
`Modelling molecular currents, conduction and aromaticity' during which this
work was initiated.

\bibliographystyle{amcjoucc}
\bibliography{references}

\clearpage
\pagebreak
\phantomsection
\section*{Appendix: Regular nut graphs of small order}
\addcontentsline{toc}{section}{Appendix: Regular nut graphs of small order}
\label{appendix:a}

This appendix reports stratified calculations of bipartite edge frustration for
$d$-regular nut graphs with degrees $3 \leq d \leq 8$, which are used in the
proofs of Theorems~\ref{thm:quarticAnyFrus} and~\ref{thm:quinticAnyFrus}
to~\ref{thm:octicAnyFrus}.

\begin{table}[!ht]
\centering
\begin{tabular}{|r|r||r|r|r|r|r|r|}
\hline
\multirow{ 2}{*}{$n$} & \multirow{ 2}{*}{$|\mathcal{N}_n^{3}|$} & \multicolumn{6}{c|}{$\varphi$} \\
\cline{3-8}
&  & 2 & 3 & 4 & 5 & 6 & 7 \\
\hline
\hline
12 & 9 & 4 & 5 & & & & \\
18 & 5541 & 784 & 2064 & 2549 & 141 & 3 & \\
20 & 5 & & 1 & 4 & & & \\
22 & 71 & & 1 & 39 & 31 & & \\
24 & 13184574 & 485769 & 2138655 & 7269778 & 3090018 & 199499 & 855 \\
\hline
\end{tabular}%

\caption{Bipartite edge frustration of cubic nut graphs.}
\end{table}

\begin{figure}[!ht]
\centering
\subcaptionbox{$\varphi = 2$} { 
\input{img/example_cubic_fru2.tikz}
}
\subcaptionbox{$\varphi = 3$} { 
\input{img/example_cubic_fru3.tikz}
}
\subcaptionbox{$\varphi = 4$} { 
\input{img/example_cubic_fru4.tikz}
}
\subcaptionbox{$\varphi = 5$} { 
\input{img/example_cubic_fru5.tikz}
}

\vspace{\baselineskip}
\subcaptionbox{$\varphi = 6$} { 
\input{img/example_cubic_fru6.tikz}
}
\quad%
\subcaptionbox{$\varphi = 7$} { 
\input{img/example_cubic_fru7.tikz}
}
\caption{Examples of smallest cubic nut graphs with bipartite edge frustration
$2 \leq \varphi \leq 7$.}
\end{figure}

\begin{table}[!ht]
\centering
\begin{tabular}{|r|r||r|r|r|r|r|r|}
\hline
\multirow{ 2}{*}{$n$} & \multirow{ 2}{*}{$|\mathcal{N}_n^{4}|$} & \multicolumn{6}{c|}{$\varphi$} \\
\cline{3-8}
 & & 2 & 4 & 6 & 8 & 10 & 12 \\
\hline
\hline
8 & 1 &  & 1 &  &  & & \\
10 & 12 & 1 & 7 & 4 & & & \\
12 & 269 & 11 & 148 & 110 & & & \\
14 & 15633 & 81 & 3992 & 11257 & 303 & & \\
15 & 1 &  &  & 1 & & & \\
16 & 1348719 & 1460 & 146103 & 1009749 & 191326 &  81 & \\
17 & 4221 & & 155 & 2413 & 1645 & 8 & \\
18 & 75138153 & 39054 & 4357478 & 46414123 & 24231403 & 96081 & 14 \\
\hline
\end{tabular}%

\caption{Bipartite edge frustration of quartic nut graphs.}
\label{tbl:quarticFrustration}
\end{table}

\begin{landscape}

\begin{table}[!p]
\centering
\begin{tabular}{|r|r||r|r|r|r|r|r|r|r|r|r|r|}
\hline
\multirow{2}{*}{$n$} & \multirow{2}{*}{$|\mathcal{N}_n^{5}|$}  & \multicolumn{11}{c|}{$\varphi$} \\
\cline{3-13}
 & & 2 & 4 & 5 & 6 & 7 & 8 & 9 & 10 & 11 & 12 & 13 \\
\hline
\hline
10 & 9 & & & & 5 & 2 & 2 &  &  &  & & \\
12 & 4 & & & & 4 &  &  &  &  &  & & \\
14 & 25 & & & &  &  & 11 & 5 & 9 &  & & \\
16 & 13530 & & & & 59 & 125 & 2213 & 3038 & 7421 & 670 & 4 & \\
18 & 665456900 & 26 & 17922 & 27586 & 2057580 & 4433698 & 67234928 & 111666174 & 344641452 & 119484432 & 15884103 & 8999 \\
\hline
\end{tabular}%

\caption{Bipartite edge frustration of quintic nut graphs.}
\label{tbl:quinticFrustration}
\end{table}

\begin{table}[!p]
\centering
\begin{tabular}{|r|r||r|r|r|r|r|r|r|r|r|r|r|r|r|}
\hline
\multirow{2}{*}{$n$} & \multirow{2}{*}{$|\mathcal{N}_n^{6}|$}  & \multicolumn{12}{c|}{$\varphi$} \\
\cline{3-14}
 & & 3 & 5 & 6 & 7 & 8 & 9 & 10 & 11 & 12 & 13 & 14 & 15 \\
\hline
\hline
12 & 1964 & &  & 2 &  & 180 &  & 1605 &  & 177 & & & \\
13 & 79 & &  &  &  &  & 3 &  & 65 &  & 11 & & \\
14 & 1872  & & &  &  & 1 &  & 184 &  & 1374 &  & 313 & \\
15 & 153665680 & 14 & 2689 & & 286837 & & 9505924 & & 80442331 & & 63146630 & & 281255 \\
\hline
\end{tabular}%

\caption{Bipartite edge frustration of sextic nut graphs.}
\label{tbl:sexticFrustration}
\end{table}

\begin{table}[!ph]
\centering
\begin{tabular}{|r|r||r|r|r|r|r|r|r|r|r|r|}
\hline
\multirow{2}{*}{$n$} & \multirow{2}{*}{$|\mathcal{N}_n^{7}|$} & \multicolumn{10}{c|}{$\varphi$} \\
\cline{3-12}
 & & 8 & 9 & 10 & 11 & 12 & 13 & 14 & 15 & 16 & 17 \\
\hline
\hline
12 & 3 & & & & & 3 & & & & & \\
14 & 5168453 & 51 & 94 & 9963 & 15389 & 417790 & 409059 & 2998502 & 822805 & 494663 & 137 \\
\hline
\end{tabular}%

\caption{Bipartite edge frustration of septic nut graphs.}
\label{tbl:septicFrustration}
\end{table}

\begin{table}[!ht]
\centering
\begin{tabular}{|r|r||r|r|r|r|r|r|}
\hline
\multirow{2}{*}{$n$} & \multirow{2}{*}{$|\mathcal{N}_n^{8}|$}  & \multicolumn{6}{c|}{$\varphi$} \\
\cline{3-8}
& & 12 & 14 & 16 & 18 & 20 & 22 \\
\hline
\hline
12 & 24 &  & 3 & 19 & 2 & & \\
14 & 424088 & 90 & 5963 & 106973 & 294964 & 16098 & \\
15 & 430798 &  & 19 & 4946 & 146933 & 275955 & 2945 \\
\hline
\end{tabular}%

\caption{Bipartite edge frustration of octic nut graphs.}
\label{tbl:octicFrustration}
\end{table}
\end{landscape}

\end{document}